\documentclass[11pt]{amsart}
\usepackage{amssymb}
\usepackage{amsmath,amssymb}
\usepackage{color}
\usepackage[all]{xy}
\usepackage{tikz}
\usepackage{tikz-cd}
\usepackage{longtable}
\usepackage{graphicx}
\usepackage{cite}
\usepackage{enumitem}
\usetikzlibrary{calc,arrows,backgrounds,decorations.markings, shapes.misc}

\theoremstyle{plain}
\newtheorem{thm}{Theorem}[section]
\newtheorem{theorem}[thm]{Theorem}

\newtheorem{lemma}[thm]{Lemma}
\newtheorem{corollary}[thm]{Corollary}
\newtheorem{proposition}[thm]{Proposition}
\theoremstyle{definition}
\newtheorem{remark}[thm]{Remark}

\newtheorem{definition}[thm]{Definition}

\newtheorem{example}[thm]{Example}

\numberwithin{equation}{section}

\newcommand{\sM}{{\mathcal M}}
\newcommand{\sN}{{\mathcal N}}
\newcommand{\sO}{{\mathcal O}}

\newcommand{\sS}{{\mathcal S}}

\newcommand{\Z}{{\mathbb Z}}

\newcommand{\fg}{{\mathfrak g}}
\newcommand{\fsl}{{\mathfrak s}{\mathfrak l}}

\newcommand{\fp}{{\mathfrak p}}

\def\Pic{\mathop{\rm Pic}\nolimits}

\title[Rigidity of Schubert varieties]{Homological rigidity and Schur rigidity of Schubert varieties in rational homogeneous spaces}

\author{Cong Ding and Qifeng Li}

\begin{document}

\tikzset{->-/.style={decoration={
				markings,
				mark=at position .5 with {\arrow{>}}},postaction={decorate}}}
	\tikzset{-<-/.style={decoration={
				markings,
				mark=at position .5 with {\arrow{<}}},postaction={decorate}}}
	\tikzset{cross/.style={cross out, draw=black, minimum size=15*(#1-\pgflinewidth), inner sep=0pt, outer sep=0pt},
		cross/.default={1pt}}

\begin{abstract} 
 A Schubert variety $X_0$ on a rational homogenous space $X=G/P$ is said to be homologically rigid, if any subvariety $Z$ on $X$ representing the same homology class with $X_0$ must satisfy $Z=g\cdot X_0$ for some $g\in{\rm Aut_0}(X)$. We say $X_0$ is Schur rigid, if furthermore any subvariety $Z$ on $X$ whose homology class is a multiple $r$ of that of $X_0$ must satisfy $Z=g_1\cdot X_0+\cdots+g_r\cdot X_0$ for some $g_1,\cdots ,g_r\in{\rm Aut_0}(X)$.
Homological rigidity and Schur rigidity of Schubert varieties in rational homogeneous spaces of Picard number one have been well studied in extensive literature. In this paper, we study both rigidity problems of Schubert varieties in rational homogeneous spaces of higher Picard numbers. We show that in the long root cases, including all cases when $G$ is of type $ADE$, smooth Schubert varieties have homological rigidity. Besides, we give the complete list of Schubert varieties of subdiagram type with/without homological rigidity. Furthermore, for a Schubert variety $X_0$ of subdiagram type, we show that it has Schur rigidity in long root cases unless $X_0$ admits a fiber bundle structure over the projective space. 

\end{abstract}

\maketitle

\medskip
MSC2020:  14M15; 32G10

\section{Introduction}
In this paper, we are going to study rigidity problems of Schubert varieties in rational homogeneous spaces of \textit{higher Picard numbers}. Let $X=G/P$ be a rational homogeneous space, where $G$ is a complex connected semisimple algebraic group and $P$ is a parabolic subgroup.  A Schubert variety $X_0$ is the clousure of a $B$-orbit (which is an affine cell) in $X$, where $B\subset P$ is the Borel subgroup. The homology classes of Schubert varieties give additive basis of the homology group of $X$, which are called Schubert classes. A classical problem in algebraic geometry is to describe all effective cycles representing homology classes in the ray generated by the Schubert classes. Trivially, we know the sum of translates of $X_0$ by $\mathrm{Aut}_0(X)$ gives such effective cycles, but quite often there are more. For example, consider the homology class of the linear subspace $\mathbb{P}^1\subset \mathbb{P}^n$,  homology class of any projective curve in $\mathbb{P}^n$ is the multiple of the homology class of such $\mathbb{P}^1$. However, if we consider the rational homogeneous space $X$ to be non-linear, some rigidity phenomenon would happen. For example, it can be shown that for a sub-Grassmannian in a Grassmannian, being both non-linear, all effective cycles representing the multiple of the Schubert class come from the sum of group translates (cf. \cite{Bry01, Hong07}).

In general, for a Schubert variety $X_0$ on a rational homogeneous space $X=G/P$, the pair $(X_0, X)$, 
is said to be \textit{Schur rigid} if for any $r\in \mathbb{Z}^+$, any subvariety $Z \subset X$ having homology class $r[X_0]$ (the square bracket denotes the homology class), must be $g_1\cdot X_0+\cdots+g_r\cdot X_0 $ for some $g_1,\cdots, g_r\in \operatorname{Aut}_0(X)$. In particular, if this holds for $r=1$, the pair $(X_0, X)$ is called \textit{homologically rigid}. There are numerous literatures on this problem, in particular in the case when the Picard number is one. See \cite{Cos11, CR13, Cos14, Hong05, HM13, HM20, Hong07, HK19, Rob13, RT12} and the references therein.

By making use of the geometric theory of variety of minimal rational tangents (VMRT for short) developed by Hwang-Mok, Hong-Mok\cite{HM13}, Hong-Kwon\cite{HK19} obtained the homological rigidity of smooth Schubert varieties in rational homogeneous space of Picard number one with a few exceptions. And the exceptions occur only when $X$ is marked at a short simple root (see Section \ref{marked_Dynkin}) and $X_0$ is linear.

To state the results, we need several notations on parabolic subgroups and marked Dynkin diagrams. Let $G$ be a simple algebraic group, $B$ be a Borel subgroup, and $R$ be the set of simple roots. Each parabolic subgroup $P$ of $G$ containing $B$ can be written as $P=P_J$ for a unique subset $J$ of $R$ such that $P_\emptyset=B$ and $P_R=G$, see Section \ref{semisimple_group} for more details. A rational homogeneous space $X=G/P_J=G/P_{I^c}$ is represented by a marked Dynkin diagram, namely marking the nodes $I=R\backslash J$ in the Dynkin diagram of $G$. The latter is called the marked Dynkin diagram and written as $\Gamma_R(I)$, see Section \ref{marked_Dynkin} for more details.

\begin{theorem}[Theorem 1.1 in \cite{HM13}, Theorem 1.4 in \cite{HK19}]\label{HM_homological}
    Let  $X=G/P_{I^c}$ be a rational homogeneous space of Picard number one  (i.e., $I=\{\alpha\}$ is a single element set) and $X_0$ be a smooth Schubert variety. If either 
    \begin{enumerate}
        \item $\alpha$ is a long root; or
        \item $X_0$ is non-linear,
    \end{enumerate}
    then any subvariety of $X$ having the same homology class as $X_0$ is induced by the action of $\operatorname{Aut}_0(X)$.

\end{theorem}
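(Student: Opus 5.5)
This is a cited result (Hong--Mok, Hong--Kwon), so the plan is to reconstruct the argument along the lines of the Hwang--Mok theory of varieties of minimal rational tangents (VMRTs). Let $Z\subset X$ be a subvariety with $[Z]=[X_0]$, and let $\mathcal C_x(X)\subset\mathbb P(T_xX)$ be the VMRT at a general point $x$. For the Schubert variety, the intersection $\mathcal C_x(X_0)=\mathcal C_x(X)\cap\mathbb P(T_xX_0)$ is again a homogeneous (sub-)VMRT. The argument has three steps.

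\textbf{Step 1 (Schubert calculus).} First we show that the homology condition forces $Z$ to be swept out by minimal rational curves (lines) of $X$. Pair $[Z]$ against the classes dual to Schubert varieties and use the positivity of intersections with general translates (Kleiman transversality). One checks that at a general smooth point $z\in Z$, the subspace $T_zZ$ has the same incidence with $\mathcal C_z(X)$ as $T_xX_0$ has with $\mathcal C_x(X)$. More precisely, $\mathcal C_z(Z):=\mathcal C_z(X)\cap\mathbb P(T_zZ)$ should contain a subvariety projectively equivalent to $\mathcal C_x(X_0)$, with the correct linear span. This is the content of the ``Schubert-type'' description of the class.

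\textbf{Step 2 (local rigidity of the sub-VMRT).} Next we show that such a linear section $\mathbb P(W)\cap\mathcal C_x(X)$ is, for dimension reasons, a translate of $\mathcal C_x(X_0)$ under the isotropy group. This uses the algebraic characterization of smooth Schubert varieties via subdiagrams: $\mathcal C_x(X_0)$ is a homogeneous submanifold given by a subdiagram of the marked Dynkin diagram. The key input is the vanishing of a Lie-algebra cohomology group (Kostant), or, in Hong--Kwon's formulation, the condition that the prolongation of the sub-VMRT is trivial. Under hypothesis (1) or (2), this gives rigidity of the pair $(\mathcal C_x(X_0),\mathcal C_x(X))$.

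\textbf{Step 3 (integration).} Then we apply Cartan--Fubini type extension / Hong--Mok's characterization of subvarieties whose VMRT structure is modelled on $(\mathcal C_x(X_0),\mathcal C_x(X))$. This shows that $Z$ is locally, hence globally, $g\cdot X_0$. Concretely, the distribution $\mathrm{span}\,\mathcal C_z(Z)$ must be integrable with flat structure, and the non-linear or long-root assumption guarantees that the associated differential system has only the homogeneous solutions.

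\textbf{Main obstacle.} The hardest part is Step 2, specifically verifying the vanishing of the relevant prolongation/cohomology. In the short-root linear case this vanishing genuinely fails: for example, linear spaces in $C_n/P$ have non-rigid deformations. My first approach would be a case-by-case check via Kostant's theorem, using the highest weights read off from the marked subdiagram.
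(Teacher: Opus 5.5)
The paper does not prove this statement; it quotes it from \cite{HM13} and \cite{HK19}. The only ingredient of that proof reproduced here is Proposition \ref{reduced_to_local_deform}: homological rigidity follows once every \emph{small} deformation of $X_0$ is a $G$-translate. Your Step 1 tries to bypass this reduction, and that is where your plan has a genuine gap.

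Intersection numbers of $[Z]$ with general translates of Schubert varieties (Kleiman transversality) give only set-theoretic information, e.g.\ $Z\cap gY=\emptyset$ for general $g$ when $[Z]\cdot[Y]=0$. They say nothing about $\mathcal{C}_z(X)\cap\mathbb{P}(T_zZ)$ at a general point $z\in Z$, and they do not even show that $Z$ is swept out by lines. Pointwise tangential constraints can be extracted from a homology class in compact Hermitian symmetric spaces, via positivity of invariant forms representing Schubert classes (Bryant, Hong). That tool is not available for a general $G/P$ of Picard number one.

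The missing idea is the degeneration argument behind Proposition \ref{reduced_to_local_deform}. By the Borel fixed point theorem, $\overline{B\cdot[Z]}$ in the Chow variety contains a $B$-stable effective cycle of class $[X_0]$. Such a cycle is a nonnegative integral combination of Schubert varieties, so it must be $X_0$ itself, because Schubert classes form a basis. Hence $[X_0]\in\overline{B\cdot[Z]}$. If a neighborhood of $[X_0]$ in the Chow variety consists of translates of $X_0$, then $B\cdot[Z]$ meets it and $Z$ is a translate.

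Only after this reduction does your Step 2 get the right input. For a small deformation $X_t$, the section $\mathcal{C}_x(X)\cap\mathbb{P}(T_xX_t)$ at a general point is a small deformation of the model $\mathcal{C}_o(X_0)\subset\mathcal{C}_o(X)$. What is needed is deformation rigidity of that model, not a dimension count: linear sections of the right dimension are not in general isotropy translates.

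Step 2 also presupposes that $X_0$ is of subdiagram type, so that its sub-VMRT is homogeneous. Under hypothesis (1) this is \cite[Proposition 3.7]{HM13}. Under hypothesis (2) with a short marked root, however, non-linear smooth Schubert varieties can be non-homogeneous horospherical varieties, as recalled in Section \ref{BP_decom_Schubert}. An example is the odd symplectic Grassmannian $\{U\subset E_{2n-1}\}$ inside $(C_n,\{\alpha_k\})$ with $k\geq 2$. Treating these is precisely the content of \cite{HK19}. Neither your Step 2 nor a Kostant-type computation read off from a marked subdiagram covers them. As written, your proposal handles at most the subdiagram cases, and even there only once Step 1 is replaced by the Chow-variety degeneration.
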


In this paper we study the cases when $X$ is of higher Picard numbers. Our main result for homological rigidity is as follows.


\begin{theorem}\label{main_thm_simple_ver}
     Let $X=G/P_{I^c}$ be a rational homogeneous space and $X_0$ be a smooth Schubert variety in $X$. If all marked roots in $I$ are long roots, then $X_0$ is homologically rigid. In particular, if $G$ is of type $ADE$, all smooth Schubert varieties on $X$ are homologically rigid.
\end{theorem}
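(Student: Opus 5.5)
Throughout, let $Z\subset X$ be a subvariety with $[Z]=[X_0]$. The plan is to reduce to the Picard number one statement, Theorem \ref{HM_homological}, through the projections $\pi_\alpha\colon X=G/P_{I^c}\to X_\alpha:=G/P_{R\setminus\{\alpha\}}$ for $\alpha\in I$. Each target $X_\alpha$ has Picard number one and is marked at the long root $\alpha$. The first step is to describe smooth Schubert varieties combinatorially. I would use the known characterization (Richmond--Slofstra style, in the long root setting) that a smooth Schubert variety $X_0=\overline{BwP_{I^c}/P_{I^c}}$ is an iterated fiber bundle whose fibers are smooth Schubert varieties in Picard number one spaces. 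Equivalently, $X_0$ is a homogeneous-like tower determined by a subdiagram together with the data of its images $\pi_\alpha(X_0)$, which are again smooth Schubert varieties in $X_\alpha$.

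Next, I would push the homology class forward. The goal is to show that $\pi_\alpha(Z)$ has the same dimension as $\pi_\alpha(X_0)$ and that $(\pi_\alpha)_*[Z]=d\,[\pi_\alpha(X_0)]$ with $d=1$, possibly after accounting for the generic fiber degree. This is intersection theory. Pair $[Z]$ with pullbacks of Schubert classes of $X_\alpha$ times suitable classes restricting to the fiber class, and compare with the corresponding numbers for $X_0$. Since $[Z]$ is determined by its Schubert coefficients and these are nonnegative for effective cycles, $[Z]=[X_0]$ forces the image dimension and the fiber class to agree. In particular, the general fiber of $Z\to\pi_\alpha(Z)$ has the class of the fiber of $X_0\to\pi_\alpha(X_0)$ inside the fiber $P_{R\setminus\{\alpha\}}/P_{I^c}$. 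Applying Theorem \ref{HM_homological}(1) to $\pi_\alpha(Z)$ gives $\pi_\alpha(Z)=g_\alpha\pi_\alpha(X_0)$. This is where the long root hypothesis enters.

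Then I would induct on $|I|$. Fix one $\alpha$ and translate so that $\pi_\alpha(Z)=\pi_\alpha(X_0)=:Y$. The general fiber $Z_y$ over $y\in Y$ is a subvariety of the fiber $F_y\cong P/P_{I^c}$, which is homogeneous with fewer marked long roots. Its class equals that of a smooth Schubert variety $X_{0,y}$, so by induction $Z_y=h_y X_{0,y}$ with $h_y$ in the automorphism group of $F_y$.

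The main obstacle is gluing: showing that the family $y\mapsto h_y X_{0,y}$ is globally a $G$-translate of $X_0$ rather than an arbitrary family of translates. To handle it, I would use all projections simultaneously. For every $\beta\in I$ we also have $\pi_\beta(Z)=g_\beta\pi_\beta(X_0)$, so $Z\subset\bigcap_{\beta} \pi_\beta^{-1}(g_\beta\pi_\beta(X_0))$. I would first try to show that the $g_\beta$ can be chosen compatibly, i.e.\ that they lie in a common coset. The idea is to exploit that $Z$ is irreducible and dominates each image, and that the stabilizers of Schubert varieties are parabolics whose intersection is controlled by the subdiagram. Then I would show that $X_0$ equals the irreducible component of $\bigcap_\beta\pi_\beta^{-1}(\pi_\beta(X_0))$ of the right dimension, which should hold for smooth Schubert varieties by the fiber bundle description. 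Comparing dimensions then gives $Z=gX_0$. If compatibility of the $g_\beta$ fails to follow directly, the fallback is the VMRT approach of Hong--Mok: the induction shows that the VMRTs of $Z$ at general points agree with those of $X_0$ up to translation, and Cartan--Fubini type extension then yields $Z=gX_0$. The $ADE$ statement follows because all roots are long there.
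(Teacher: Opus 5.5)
There are two genuine gaps in your proposal, and the second is the central one.

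\textbf{Smoothness of the images $\pi_\alpha(X_0)$.} You assume that for \emph{every} $\alpha\in I$ the image $\pi_\alpha(X_0)$ is a smooth Schubert variety of $X_\alpha$, and you apply Theorem~\ref{HM_homological} to $\pi_\alpha(Z)$ for all $\alpha$ (later for all $\beta$ at once). This fails even in type $A$. In $SL_4/B$, take $X_0=X(w)$ with $w=s_1s_3s_2$. Its Poincar\'e polynomial is $(1+t)^3$, which is palindromic, so $X_0$ is rationally smooth, hence smooth in type $A$. Since $w$ is a minimal coset representative for $P_{R\setminus\{\alpha_2\}}$, $X_0$ maps birationally onto the Schubert divisor $\{V: V\cap E_2\neq 0\}$ of $\mathrm{Gr}(2,4)$. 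That divisor is a singular (tangent) hyperplane section of the Pl\"ucker quadric. A smooth hyperplane section has the same class, so nothing like $\pi_{\alpha_2}(Z)=g\,\pi_{\alpha_2}(X_0)$ follows from homology. Theorem~\ref{smooth_BP} only guarantees \emph{some} root along which $X_0$ is a Billey--Postnikov fiber bundle over a smooth Picard-number-one Schubert variety. Here that root is $\alpha_1$, with base a line in $\mathbb{P}^3$. The argument has to run along one such chosen tower, which is why Proposition~\ref{prop homology rigidity reduction} is stated in terms of the existence of a single tower. Along such a root your intersection-theoretic step can be made rigorous: Leray--Hirsch plus integrality of Schubert coefficients give $[\pi(Z)]=[\pi(X_0)]$ and the right fiber class. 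The strategy of using all projections, however, collapses.

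\textbf{The gluing step.} You leave this as something you ``would try'' to show, but it is the real content. Knowing that each general fiber $Z_y$ is some translate $h_yX_{0,y}$ does not produce a global $g$. Even for homogeneous $X_0$ of subdiagram type with two marked roots, the paper needs a separate fiberwise argument (Proposition~\ref{thm_with_condition_E} with Lemma~\ref{intersection_max}). For non-homogeneous $X_0$ there is no parabolic-intersection bookkeeping of the kind you sketch. Your VMRT/Cartan--Fubini fallback is exactly what the paper notes does not carry over to higher Picard number, because $X_0$ is non-homogeneous and there are several minimal rational components.

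The missing idea runs as follows. Reduce to local deformations via Proposition~\ref{reduced_to_local_deform}, and induct on the Picard number of $X_0$ along the chosen tower. First make the deformation trivial on the base $X^K(v)$. Then regard the fibers of the deformed family as a map from $X^K(v)$ into the Chow component of $X^J(u)$, which by induction is the homogeneous space $G/Q(u)$. The image is then a \emph{deformation} of the Schubert variety $F_0\cong X^K(v)\subset G/Q(u)$. Since $G/Q(u)\to G/P_K$ maps $F_0$ isomorphically onto $X^K(v)$, Lemma~\ref{pic_one_base} applies once more and yields a single $g(t)$ moving all fibers simultaneously. In your global setup, the analogous subvariety of $G/Q(u)$ swept out by the fibers of $Z$ is only known to map birationally onto $X^K(v)$, with no control on its class, so no rigidity statement can be invoked. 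Working with local deformations is what supplies that control.
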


Note that the homogeneity of the Schubert variety is not required in Theorem \ref{main_thm_simple_ver}. In fact we can prove a stronger version by applying Proposition \ref{prop homology rigidity reduction}, which also covers some cases when there are short roots in $I$ (for example, see Corollary \ref{subdiagram_rigid_marked_root_outside}). 
From Theorem \ref{smooth_BP} (Theorem 3.3, Theorem 3.6 in \cite{RS16}), we know in higher Picard number cases, smooth Schubert varieties are  iterated fiber bundles of smooth Schubert varieties in rational homogeneous spaces of Picard number one (which will be called Billey-Postnikov decomposition, see Section \ref{BP_decom_Schubert}). We can prove that if there are no exceptional pair in the sense of Definition \ref{exceptional_pair} (which is slightly broader than being not homologically rigid, see 
Remark \ref{covering_pair}) in the fibers, then homological rigidity holds. Applying Theorem \ref{HM_homological} and Proposition \ref{prop homology rigidity reduction}, we can get Theorem \ref{main_thm_simple_ver} easily.


In \cite{HM13}, the proof of homological rigidity can be reduced to characterize the local deformation of the Schubert variety and apply geometric theory of VMRT's. There are difficulties in generalizing the method in \cite{HM13} to higher Picard number cases, including the non-homogeneity of the Schubert varieties, and the parallel transport techniques in VMRT theory with \textit{several} minimal rational components. Instead, we give a proof which uses the existence of Billey-Postnikov decomposition  (see Section \ref{BP_decom_Schubert}) and the induction on Picard numbers.

As an application, we determine completely whether homological rigidity holds for $(X_0, X)$ supposing that $X_0 \subset X$ is 
a Schubert variety associated with a Dynkin subdiagram $\Gamma_L$, see Section \ref{Preli_Schubert_subdiagram} for the conventions. Here we say $X_0$ is of subdiagram type. Throughout this paper, we follow the root conventions from Bourbaki's \textit{Lie Groups and Lie Algebras}
 \cite{Bour02}.


\begin{theorem}\label{homological_subdiagram}
    Let $X=G/P_{I^c}$ be a rational homogeneous space and $X_0=G_0/P_0$
be a Schubert variety associated with a Dynkin subdiagram $\Gamma_L$. Then homological rigidity holds except for the following cases.
\begin{enumerate}
        \item  $X=(F_4,\{\alpha_3\}), L=\{\alpha_2,\alpha_3\}$ or $\{\alpha_3\}$;
        \item $X=(F_4,\{\alpha_4\}),  L=\{\alpha_3,\alpha_4\}$ or $\{\alpha_4\}$;
        \item$X=(F_4, \{\alpha_1,\alpha_3\}), L=\{\alpha_3\}$; 
        \item $X=(F_4, \{\alpha_1,\alpha_4\}), L=\{\alpha_4\}$ or $\{\alpha_3, \alpha_4\}$; 
        \item $X=(F_4,\{\alpha_1,\alpha_3, \alpha_4\}), L=\{\alpha_3\}$;
     \item $X=(F_4,\{\alpha_3, \alpha_4\}), L=\{\alpha_3\}$ or $\{\alpha_2,\alpha_3\}$;
         \item $X=(C_n,\{\alpha_k\}\cup R_1)(n\geq 3), L=\{\alpha_{k},\alpha_{k+1}\cdots, \alpha_{b-1}\}\cup R_2$,
         $2\leq k< b\leq n$, $R_1\subset \{\alpha_1,\alpha_2,\dots, \alpha_{k-2}, \alpha_{k-1}\}, R_2\subset \{\alpha_1,\alpha_2,\dots, \alpha_{k-2}\}$ ($R_2=\emptyset$ if $k=2$);
         \item $X=(B_n, \{\alpha_n\} \cup R_1)(n\geq 3), L=\{\alpha_{b+1},\alpha_{b+2}, \cdots, \alpha_n\}\cup R_2$,
         $1<  b <n$, $R_1 \subset \{\alpha_1, \alpha_2, \dots, \alpha_{b-1}\}, R_2\subset \{\alpha_1, \alpha_2, \dots, \alpha_{b-2}\}$($R_2=\emptyset$ if $b=2$), $R_1\neq \emptyset$.
    \end{enumerate}
Here, $(G,I)$ denotes the pair of algebraic group and the set of marked roots determining $G/P_{I^c}$. 
Moreover, the cases listed above are not homologically rigid.
\end{theorem}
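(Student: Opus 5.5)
A subdiagram-type Schubert variety $X_0=G_0/P_0$ is itself homogeneous, hence smooth. The plan is to reduce to Picard number one via the Billey--Postnikov decomposition and Proposition \ref{prop homology rigidity reduction}, and then to inspect the finitely many families where a short marked root meets a linear fiber.

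\textbf{Decomposition.} For $\Gamma_L$, the group $G_0$ is the subgroup with Dynkin diagram $\Gamma_L$, and $X_0=G_0/(G_0\cap P_{I^c})$ is the rational homogeneous space with marked diagram $\Gamma_L(I\cap L)$. Ordering the marked nodes $I\cap L=\{\beta_1,\dots,\beta_m\}$ gives a tower of homogeneous fibrations
\[
X_0\to G_0/P_{L\setminus\{\beta_1,\dots,\beta_{m-1}\}}\to\cdots\to G_0/P_{L\setminus\{\beta_1\}}.
\]
Each fiber is a Picard number one homogeneous space $(\Gamma_{L_j},\{\beta_j\})$, with $L_j$ the connected component of $L\setminus\{\beta_1,\dots,\beta_{j-1}\}$ containing $\beta_j$. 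This tower is compatible with the corresponding tower for $X$, so it is a Billey--Postnikov decomposition. Proposition \ref{prop homology rigidity reduction} then gives homological rigidity unless some step yields an exceptional pair in the sense of Definition \ref{exceptional_pair}.

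\textbf{Where exceptional pairs can occur.} By Theorem \ref{HM_homological}, a step can be exceptional only if $\beta_j$ is short in the ambient Picard-number-one fiber of $X$ and the fiber of $X_0$ is linear, i.e.\ a projective space sitting in $(G',\{\beta\})$ with $\beta$ short. Since $G$ must be non-simply-laced, we enumerate:
\begin{itemize}[leftmargin=*]
\item $B_n$ with $\beta=\alpha_n$: the fiber of $X_0$ is the linear space coming from $\{\alpha_{b+1},\dots,\alpha_n\}$ inside an odd orthogonal Grassmannian-type fiber.
\item $C_n$ with $\beta=\alpha_k$, $k<n$: the fiber comes from the $A$-type string $\{\alpha_k,\dots,\alpha_{b-1}\}$.
\item $F_4$ with $\beta\in\{\alpha_3,\alpha_4\}$.
\item $G_2$ with a short root.
\end{itemize}
We use the freedom in ordering the $\beta_j$, together with the choice of which ambient fiber the reduction uses, to push short marked roots to steps where the ambient fiber is simply laced or the fiber of $X_0$ is nonlinear. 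Tracking which extra marked nodes ($R_1$) and extra subdiagram nodes ($R_2$) allow this should carve out exactly conditions (1)--(8). For example, in case (8) the requirement $R_1\neq\emptyset$ ensures the ambient fiber at $\alpha_n$ is an odd orthogonal space of the relevant type rather than a quadric-type space that is rigid. The $G_2$ cases should drop out because the relevant fibers are rigid by direct check.

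\textbf{Non-rigidity of the listed cases.} Each listed case needs an explicit deforming subvariety $Z$ with $[Z]=[X_0]$ and $Z\notin \mathrm{Aut}_0(X)\cdot X_0$. The model is the Picard number one counterexamples of \cite{HM13,HK19}: in $(C_n,\alpha_k)$, linear spaces not of the form $g\cdot X_0$, e.g.\ those spanned in a non-isotropic-enough position; in $(B_n,\alpha_n)$ and $(F_4,\alpha_3)$, $(F_4,\alpha_4)$, similar linear families. For several markings, we take the preimage or a section of such a non-rigid $Z'$ in the Picard number one quotient through the fibration $X\to G/P_{\{\beta\}^c}$, adjusted by the $R_1$, $R_2$ directions. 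We then check that its homology class equals $[X_0]$ via the projection formula, and that it is not a translate because its image, or its tangent-space type, differs.

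\textbf{Main obstacle.} I expect the hardest step to be the sharpness: showing exactly the listed $R_1,R_2$ configurations fail and all others are rigid. This requires understanding precisely when the exceptional pair at a short root survives every reordering of the decomposition, e.g.\ why $R_2$ must avoid $\alpha_{k-1}$ in type $C$. I would attack it first by computing the Picard number one fibers in small rank ($C_3$, $B_3$, $F_4$) explicitly.
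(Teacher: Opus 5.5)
There are two genuine gaps. Both concern how you decide which Billey--Postnikov towers are obstructed.

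\textbf{First gap: your test for exceptional pairs misreads Theorem~\ref{HM_homological}.} Your test is ``short marked root and linear fiber.'' But Theorem~\ref{HM_homological} gives rigidity only modulo $\mathrm{Aut}_0$ of the Picard-number-one fiber. Proposition~\ref{prop homology rigidity reduction} needs non-exceptionality in the sense of Definition~\ref{exceptional_pair}, i.e.\ with respect to the subgroup of $G$ acting on that fiber. When the fiber is one of the spaces of Lemma~\ref{lem-aut-larger}, every proper positive-dimensional Schubert subvariety gives an exceptional pair, linear or not (Proposition~\ref{prop-exceptional-pair}(1)). Moreover, the type-$B$ factor coming from $\{\alpha_{b+1},\dots,\alpha_n\}$ is a spinor variety, which is non-linear once $n-b\ge 3$.

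Concretely, take $X=(B_5,\{\alpha_1,\alpha_5\})$ and $L=\{\alpha_3,\alpha_4,\alpha_5\}$, so $X_0\cong Q^6$. Contracting $\alpha_5$ first gives the pair $(Q^6,(B_4,\{\alpha_4\}))$ over a point, which your test calls non-exceptional, so your argument would prove rigidity. This is Case~(8), and it is not rigid:
\begin{itemize}
\item $(B_4,\{\alpha_4\})\cong(D_5,\{\alpha_4\})$;
\item inside this fiber, the translates of $X_0$ form a $Q^8$ under $D_5$ but only a $Q^7$ under $B_4$;
\item $R_1\neq\emptyset$ forces $\mathrm{Aut}_0(X)=B_5$, so the extra $D_5$-deformations are not translates.
\end{itemize}
That is the real role of $R_1\neq\emptyset$, not a ``quadric-type'' ambient fiber. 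It is also why your non-rigidity sketch via ``linear families in $(B_n,\alpha_n)$'' cannot work: in Picard number one, $(B_n,\{\alpha_n\})\cong(D_{n+1},\{\alpha_n\})$ has only homologically rigid smooth Schubert varieties (Remark~\ref{covering_pair}). The paper instead deforms one product factor of $X_0$ by the larger automorphism group of a homogeneous subspace of $X$ containing it: $D_\bullet$ in type $B$, resp.\ $\mathrm{PGL}$ of a maximal linear space in type $C$ (Lemmas~\ref{type_B_subdiagram_not_rigid} and~\ref{Type_C_subdiagram_not_rigid}).

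\textbf{Second gap: ``rigid iff some ordering avoids exceptional pairs'' is false.} Even with the correct notion of exceptional pair, tracking reorderings cannot carve out exactly (1)--(8), because there are rigid cases where \emph{every} tower contains an exceptional pair. For example, take $X=(B_3,\{\alpha_1,\alpha_3\})$ and $L=\{\alpha_1,\alpha_3\}$:
\begin{itemize}
\item contracting $\alpha_3$ first gives an isotropic line in $(B_2,\{\alpha_2\})\cong\mathbb{P}^3$, which is exceptional for $B_2\cong\mathrm{Sp}_4$;
\item contracting $\alpha_1$ first leaves the line given by $\{\alpha_3\}$ in $(B_3,\{\alpha_3\})\cong Q^6$, which is exceptional for $B_3\subsetneq D_4$.
\end{itemize}
Yet this case is rigid by Lemma~\ref{type_B_subdiagram_special_rigid} (type $B$ with $\alpha_{b-1}\in L$). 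The same happens for the $F_4$ cases $I=L=\{\alpha_1,\alpha_3\}$, $I=L=\{\alpha_1,\alpha_4\}$, and $I=\{\alpha_1,\alpha_4\}$, $L=\{\alpha_1,\alpha_3,\alpha_4\}$ (Lemma~\ref{F4_special_case}). Proposition~\ref{prop homology rigidity reduction} cannot reach these, so your plan would either misclassify them or leave them open.

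The missing idea is the paper's refinement:
\begin{itemize}
\item take a step whose base factor $X^K(v)$ is rigid, and normalize the deformation to be trivial on it;
\item write $\mathcal{M}_t=X^K(v)\times Y_t$, and regard $Y_t$ as a deformation inside the Chow space $G/P_{R\setminus R_3}$ of translates of $X^K(v)$;
\item there the factor $X^J(u)$ is rigid for a different reason (in type $B$ it becomes a full fiber of a Mori contraction), giving $Y_t=g(t)\cdot Y_0$;
\item $g(t)$ stabilizes $X^K(v)$, hence $\mathcal{M}_t=g(t)\cdot X_0$.
\end{itemize}
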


\begin{remark}
In the literature such as \cite{HM13}, the authors use $\Lambda$ to denote the set of simple roots in $\Gamma\backslash \Gamma_L$ which are adjacent to the subdiagram $\Gamma_L$, and in the Picard number one case, the subdiagram can be uniquely determined by $\Lambda$. 
    However, in higher Picard number case, in general $\Lambda$ cannot determine the subdiagram uniquely, since there may exist an additional connected component lying at the leftmost (or rightmost) position of the subdiagram, where the set of simple roots adjacent to the subdiagram remains the same. 
    We thus adopt the terminology in Theorem \ref{homological_subdiagram} in an alternative manner.
\end{remark}

\begin{corollary}\label{cor_marked_roots_contained_in_L}
   In the setting of Theorem \ref{homological_subdiagram}, the following hold.
   \begin{enumerate}
       \item[(i)]  If $I\subset L$,  then the homological rigidity holds except for Cases (1),(2), and Cases (7),(8) with $R_1\subset R_2$;
       \item[(ii)]  If $L$ is connected, then the homological rigidity holds except for Cases (1)-(6) and Cases (7),(8) with $R_2=\emptyset$.
   \end{enumerate}
\end{corollary}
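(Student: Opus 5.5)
The corollary should follow from Theorem \ref{homological_subdiagram} by a direct combinatorial check: for each exceptional case (1)--(8), decide whether it satisfies the extra hypothesis. The non-rigidity of the surviving cases is already part of the theorem, and every other case is rigid by its main assertion. So in each part I only have to list which of (1)--(8) meet the hypothesis.

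For (i), assume $I\subset L$.
\begin{itemize}
\item In Cases (1) and (2), $I=\{\alpha_3\}$ (resp. $\{\alpha_4\}$). Both choices of $L$ contain this root, so both cases survive.
\item In Case (3), $\alpha_1\notin L$, so it is excluded.
\item In Case (4), $\alpha_1\notin L$, so it is excluded.
\item In Case (5), $\alpha_1\notin L$, so it is excluded.
\item In Case (6), $\alpha_4\notin L$, so it is excluded.
\item In Case (7), $I=\{\alpha_k\}\cup R_1$. Here $\alpha_k\in L$ automatically. Since $R_1\subset\{\alpha_1,\dots,\alpha_{k-1}\}$ and $L\cap\{\alpha_1,\dots,\alpha_{k-1}\}=R_2$, the inclusion $I\subset L$ is equivalent to $R_1\subset R_2$. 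This forces $\alpha_{k-1}\notin R_1$, which is consistent with the theorem.
\item In Case (8), $I=\{\alpha_n\}\cup R_1$ and $\alpha_n\in L$. Since $L\cap\{\alpha_1,\dots,\alpha_{b-1}\}=R_2$, the condition is again exactly $R_1\subset R_2$. This is compatible with the constraint $R_1\neq\emptyset$ whenever $R_2\ne\emptyset$.
\end{itemize}
This gives the list in (i).

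For (ii), assume $L$ is connected.
\begin{itemize}
\item In Cases (1)--(6), every listed $L$ is $\{\alpha_3\}$, $\{\alpha_4\}$, $\{\alpha_2,\alpha_3\}$ or $\{\alpha_3,\alpha_4\}$. These are all connected in the $F_4$ diagram, so all of Cases (1)--(6) survive.
\item In Case (7), the string $\{\alpha_k,\dots,\alpha_{b-1}\}$ is connected. Any nonempty $R_2\subset\{\alpha_1,\dots,\alpha_{k-2}\}$ is not adjacent to $\alpha_k$, so it lies in a separate component. Hence $L$ is connected iff $R_2=\emptyset$.
\item In Case (8), the same holds, since $R_2\subset\{\alpha_1,\dots,\alpha_{b-2}\}$ is separated from $\alpha_{b+1}$ by $\alpha_{b-1},\alpha_b\notin L$. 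So $L$ is connected iff $R_2=\emptyset$.
\end{itemize}
This gives the list in (ii).

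The only step needing care is the Case (7)/(8) analysis of the intersection of $L$ with the left part of the diagram, and in particular the index gap between $R_2$ and the main string. This is straightforward given the explicit bounds in the theorem. Nothing else is required beyond invoking Theorem \ref{homological_subdiagram}.
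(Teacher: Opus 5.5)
Your proposal is correct and matches the paper's approach. The paper states this corollary without a separate proof, as an immediate consequence of Theorem \ref{homological_subdiagram}, obtained by checking which of Cases (1)--(8) satisfy $I\subset L$ or have $\Gamma_L$ connected. Your case-by-case check is exactly that inspection: you use $L\cap\{\alpha_1,\dots,\alpha_{k-1}\}=R_2$ (resp.\ $L\cap\{\alpha_1,\dots,\alpha_{b-1}\}=R_2$), and the fact that $\alpha_{k-1}\notin L$ (resp.\ $\alpha_b\notin L$) separates $R_2$ from the main string.
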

   From a private communication, we know that J. Hong also proves the conclusion (i) of Corollary \ref{cor_marked_roots_contained_in_L}, where she uses the cohomology method.

In \cite{HM20}, Hong-Mok obtained the following result on Schur rigidity.

\begin{theorem}[Theorem 1.1 in \cite{HM20}]\label{Schur_HM}
    Let $X = G/P$ be a rational homogeneous space of Picard number
one and let $X_0$ be a non-linear smooth Schubert variety of $X$. Then $X_0$ has Schur rigidity in $X$.
\end{theorem}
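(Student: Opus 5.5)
Since the components of an effective cycle cannot cancel in homology, the plan is first to reduce to an irreducible subvariety $Z\subset X$, and then to show that $Z$ is a translate of $X_0$ by proving that it is an integral variety of the \emph{Schubert differential system} of $X_0$, in the spirit of Bryant \cite{Bry01} and Walters. I would carry out the reduction as follows. Let $Z=\sum_i m_i Z_i$ with $[Z]=r[X_0]$. Each irreducible component has $[Z_i]=\sum_w a^{(i)}_w[X_w]$ with all $a^{(i)}_w\ge 0$, by effectivity and positivity of Schubert classes. Comparing coefficients in $\sum_i m_i[Z_i]=r[X_0]$ forces $[Z_i]=r_i[X_0]$ with $\sum m_i r_i=r$. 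So it suffices to show that an irreducible $Z$ with $[Z]=r'[X_0]$ must have $r'=1$ and $Z=g\cdot X_0$. Once this is shown, the multiplicities $m_i$ account for the repeated translates in $g_1\cdot X_0+\cdots+g_r\cdot X_0$.

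For irreducible $Z$, I would take a general smooth point $z\in Z$ and determine $T_zZ$.
\begin{enumerate}
\item The vanishing $[Z]\cdot[X^w]=0$, for every dual Schubert class $[X^w]$ other than the one dual to $[X_0]$, means that $T_zZ$ must meet the translated dual Schubert varieties improperly.
\item Step (1) places $T_zZ$ in a closed subvariety of the Grassmannian $\mathrm{Gr}(\dim X_0,T_zX)$. This subvariety is a union of $P$-orbit closures, and the relevant open piece is the orbit of $T_oX_0$.
\item Using the smoothness of $X_0$, which in Picard number one means $X_0$ is homogeneous of subdiagram type by Hong--Mok, I would identify the Schubert system with the set of $\dim X_0$-planes $W$ for which $\mathbb P(W)\cap\mathcal C_z(X)$ is a translate of the VMRT $\mathcal C_o(X_0)$, i.e.\ a linear section of $\mathcal C_z(X)$.
\end{enumerate}
This step uses the nonlinearity of $X_0$: when $X_0$ is linear, $\mathcal C_o(X_0)$ is a linear space, the condition is too weak, and rigidity genuinely fails.

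Next I would show that $Z$ is uniformly covered by minimal rational curves of $X$ lying in $Z$, and that its VMRT at a general point satisfies $\mathcal C_z(Z)=\mathcal C_z(X)\cap\mathbb P(T_zZ)\cong\mathcal C_o(X_0)$. The nonlinearity gives that $\mathcal C_o(X_0)$ spans $T_oX_0$, so the lines through $z$ tangent to $Z$ sweep out enough of $Z$. I would then invoke the Cartan--Fubini type extension and parallel transport of VMRTs, in the form used in \cite{HM13}. This gives that $Z$ agrees, on a neighborhood of $z$, with a translate $g\cdot X_0$, and hence globally, by irreducibility and equal dimension. Then $[Z]=[X_0]$, so $r'=1$.

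The main obstacle is step (3), namely proving that integral varieties of the Schubert system are necessarily locally homogeneous. I would first try to compute the Lie algebra cohomology $H^1(\mathfrak m_-, \mathfrak g/\mathfrak p)$ restricted to the subspace determined by $\Gamma_L$, via Kostant's theorem. The aim is to show that the prolongation of the symbol algebra of $X_0$ is exactly $\mathfrak g_0=\operatorname{Lie}(G_0)$, so that the Schubert system is rigid in the sense of Cartan--Kähler/Tanaka theory. Nonlinearity should appear as the nonvanishing of the second fundamental form of $\mathcal C_o(X_0)$, which is what kills the extra cohomology. If this cohomological computation became intractable in the short-root cases, the fallback would be a case-by-case analysis along the list of smooth Schubert varieties in Picard number one from \cite{HK19}.
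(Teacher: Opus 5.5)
The paper does not prove this statement. It quotes it from \cite{HM20} and uses it as a black box (in Lemma~\ref{schur_pic_num_one} and Proposition~\ref{no_condition_E}), so your sketch has to stand on its own. Your reduction to an irreducible $Z$ with $[Z]=r'[X_0]$ is correct, and step (1) is a legitimate use of Kleiman transversality.

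The decisive gap is the passage from ``$\mathbb P(T_zZ)\cap\mathcal C_z(X)$ is a translate of $\mathcal C_o(X_0)$'' to ``$Z$ is covered by lines of $X$ lying in $Z$, with $\mathcal C_z(Z)=\mathcal C_z(X)\cap\mathbb P(T_zZ)$''. The reason you give, that $\mathcal C_o(X_0)$ spans $T_oX_0$, says nothing about whether lines in those directions stay inside $Z$. For example, take a generic small analytic perturbation $Z$ of a piece of a smooth linear section $Q^k\subset Q^n$ with $k\ge 3$. At every point, $\mathbb P(T_zZ)\cap\mathcal C_z(Q^n)$ is a smooth quadric spanning $\mathbb P(T_zZ)$, yet $Z$ contains no line segments.

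In the homological setting of \cite{HM13} this came for free. There one only treats small deformations of $X_0$ (Proposition~\ref{reduced_to_local_deform}), and the lines of $X_0$ deform along with it. Here that route is closed: unless you already know $r'=1$, the $B$-fixed limit of $G\cdot[Z]$ in the Chow variety is the non-reduced cycle $r'X_0$. What is needed is an argument that uses integrability, namely that $T_zZ$ lies in the proper closed Schubert system at \emph{every} point. Combined with the second fundamental form of $\mathcal C_z(X)$, this should force the lines tangent to $\mathbb P(T_zZ)\cap\mathcal C_z(X)$ into $Z$. That is where the non-linearity of $X_0$ should enter, not through mere spanning. Without this step the Cartan--Fubini extension has nothing to act on.

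There are two further problems.

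First, it is false that a smooth Schubert variety in Picard number one is homogeneous of subdiagram type. As the paper itself recalls in Section~\ref{BP_decom_Schubert} (citing \cite{HK19}), non-homogeneous smooth horospherical Schubert varieties occur when the marked root is short. An example is the odd symplectic Grassmannian $\{V\in X : V\subset H\}$, with $H$ the $B$-stable hyperplane, in $X=(C_n,\{\alpha_k\})$ for $2\le k\le n-1$. These are non-linear, so they fall within the statement. Neither your description of the Schubert system via $\mathcal C_o(X_0)$ nor the subdiagram-type VMRT machinery of \cite{HM13} addresses them.

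Second, step (2) is asserted rather than proved. The locus of $\dim X_0$-planes that are non-transversal to all translates through $z$ of the dual Schubert cells (other than the one dual to $X_0$) is a closed $P$-stable subvariety of the Grassmannian. You give no reason why it equals the orbit closure of $T_oX_0$, or why $T_zZ$ lies in the open orbit rather than a boundary stratum for general $z$. For non-Hermitian-symmetric $X$, the isotropy action on $T_oX$ is only filtered, and this identification is itself a substantial part of the problem. Your Kostant/prolongation computation is stated as an intention with a case-by-case fallback. As it stands, the proposal is an outline of a strategy rather than a proof.
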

In this paper, we obtain the following result on Schur rigidity for the subdiagram case. 
\begin{theorem}\label{Schur_subdiagram}
 Let $X=G/P_{I^c}$ be a rational homogeneous space, and $X_0$ be a Schubert variety associated with a Dynkin subdiagram $\Gamma_L$. Suppose that all marked roots in $I\cap L$ are long roots, and $X_0$ is not a fiber bundle over the projective space. Then $X_0$ has Schur rigidity in $X$.
\end{theorem}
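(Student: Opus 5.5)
We argue by induction on the Picard number of $X$, that is, on $|I|$, in parallel with the proof of Theorem \ref{main_thm_simple_ver}. The base case $|I|=1$ is Theorem \ref{Schur_HM}. To apply it we must check that $X_0$ is non-linear. If $X_0$ were linear, then $X_0\cong\mathbb{P}^m$ would itself be a (trivial) fiber bundle over a projective space, which the hypothesis excludes.

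For the inductive step, write $X_0=G_0/P_0$ with $G_0$ the semisimple subgroup attached to $\Gamma_L$. Choose a marked root $\alpha\in I$ and let $I'=I\setminus\{\alpha\}$. We use the $G$-equivariant projection $\pi\colon X=G/P_{I^c}\to X'=G/P_{I'^c}$, whose fibers are the rational homogeneous spaces $F=P_{I'^c}/P_{I^c}$. Restricted to $X_0$, this gives the Billey--Postnikov type fibration $X_0\to X_0'=\pi(X_0)$, whose fiber $F_0=X_0\cap F$ is again of subdiagram type (Section \ref{BP_decom_Schubert}).

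Now let $Z$ be an effective cycle with $[Z]=r[X_0]$. The plan has three steps.

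\textbf{Step 1 (push forward to $X'$).} Write $Z=\sum_i m_iZ_i$. Push forward to get $\pi_*[Z_i]=d_i[X_0']$, where $d_i$ is the degree of $Z_i\to\pi(Z_i)$ if $\dim\pi(Z_i)=\dim X_0'$, and $d_i=0$ otherwise. We must rule out $d_i=0$ and make the degrees match. To do this, intersect $[Z]$ with Schubert classes pulled back from $X'$ and with fiber classes. Because the Schubert classes are a basis and $X_0$ is a Schubert variety, the class of each $Z_i$ is a nonnegative combination of Schubert classes whose sum is $r[X_0]$. Positivity then forces each $[Z_i]$ to be $r_i[X_0]$ with $\sum m_ir_i=r$.

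\textbf{Step 2 (apply induction on base and fiber).} We need the components $\pi(Z_i)$ to be translates of $X_0'$, and the fibers of $Z_i\to\pi(Z_i)$ over general points to be translates of $F_0$.
\begin{itemize}
\item The pair $(X_0',X')$ has smaller Picard number. By induction it is Schur rigid, provided $X_0'$ is again not a bundle over a projective space and the long-root hypothesis on $I'\cap L$ holds.
\item The fiber class is $r_i$ times $[F_0]$ in $F$. The fiber $F$ is a rational homogeneous space of a Levi factor, and it has smaller Picard number when $|I|\ge2$.
\end{itemize}
The choice of $\alpha$ must make both conditions hold. I would take $\alpha$ to be a marked root outside $L$ if one exists, since then $X_0\cong X_0'$ and the step reduces directly. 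Otherwise $I\subset L$, all roots of $I$ are long, and the hypothesis that $X_0$ is not a $\mathbb{P}^m$-bundle should let us choose $\alpha$ so that neither $X_0'$ nor $F_0$ is a linear space.

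\textbf{Step 3 (reassemble $Z$).} Knowing that $\pi(Z_i)=g\cdot X_0'$ and that the general fibers are translates $h_x\cdot F_0$, we must show the family $\{h_x\}$ is constant, up to the stabilizer, along $g\cdot X_0'$. Then $Z_i$ is a single translate of $X_0$ and $r_i=1$. I would argue this via deformation of the subvariety: the subvariety $\bigcup_x h_x F_0$ is determined by a section of an associated homogeneous bundle over $X_0'$ with fiber the space of translates of $F_0$ in $F$. Such a section must then be constant, by rigidity of $P_0$-equivariant data, or by using the VMRT/tangent-direction argument from the proof of Proposition \ref{prop homology rigidity reduction}.

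I expect Step 3, together with excluding multiplicities $r_i>1$ in the fibers, to be the main obstacle. A degree-$r$ multisection of translates of $F_0$ over $X_0'$ could in principle be monodromically glued into an irreducible $Z_i$ not of the form $g\cdot X_0$. This is exactly the phenomenon that occurs when $X_0$ fibers over $\mathbb{P}^m$, which explains the hypothesis. To rule out such gluing, I would show the base $X_0'$ is simply connected, so it has no nontrivial étale covers. I would then use Schur rigidity of the fiber pair to see that, locally over the base, the cycle splits into $r$ sections. Each of these sections then extends globally, which gives $Z=g_1X_0+\cdots+g_rX_0$.
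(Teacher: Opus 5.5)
\paragraph{Gap: the fiber pair need not be Schur rigid.} Your induction needs, at every step, Schur rigidity of the fiber pair $(F_0,F)$ of the projection $\pi$ that forgets one marked root $\alpha$. This input is not available. By Lemma \ref{lem-fiber-bundle-Schubert}, the hypothesis that $X_0$ is not a fiber bundle over a projective space controls only the images of $X_0$ in the Picard-number-one quotients $G/P_{R\setminus\{\beta\}}$. It says nothing about the fibers $F_0$, and your expectation that some choice of $\alpha$ makes $F_0$ non-linear is false.

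Take $G=SL_7$ (type $A_6$), $I=\{\alpha_3,\alpha_4\}$ and $L=\{\alpha_2,\alpha_3,\alpha_4,\alpha_5\}$. Then $X=\{V_3\subset V_4\subset\mathbb{C}^7\}$ and $X_0=\{E_1\subset V_3\subset V_4\subset E_6\}$. The two contractions of $X_0$ go onto $\mathrm{Gr}(2,5)$ and $\mathrm{Gr}(3,5)$, so the theorem applies.
\begin{itemize}
\item Forgetting $\alpha_3$ gives $F_0=\{E_1\subset V_3\subset V_4\}\cong\mathbb{P}^2$, a hyperplane in $F\cong\mathbb{P}^3$.
\item Forgetting $\alpha_4$ gives $F_0=\mathbb{P}(E_6/V_3)$, a hyperplane in $F=\mathbb{P}(\mathbb{C}^7/V_3)\cong\mathbb{P}^3$.
\end{itemize}
In both cases the fiber pair is a hyperplane in $\mathbb{P}^3$, which is not Schur rigid: quadric surfaces have class $2[\mathbb{P}^2]$. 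So Step 2 gives you nothing here.

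Step 3 is also not an argument as it stands:
\begin{itemize}
\item Simple connectivity of $X_0'$ rules out \'etale covers, but not branched multisections.
\item Nothing you state forces the section of the bundle of translates of $F_0$ to be constant.
\item The proof of Proposition \ref{prop homology rigidity reduction} is a local-deformation argument (via Proposition \ref{reduced_to_local_deform}). It says nothing about cycles of class $r[X_0]$ with $r\ge 2$, which are not deformations of $X_0$.
\end{itemize}
Even your case $\alpha\notin L$ is not direct. Knowing $\pi(Z)=gX_0'$ only makes $Z$ a multisection of $\pi^{-1}(gX_0')$. What is needed there is the numerical argument of Lemma \ref{pic_num_one}: project to $G/P_{R\setminus\{\alpha\}}$, which contracts $X_0$ to a point, so $Z$ must lie in a single fiber.

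\paragraph{The missing idea: use only bases, never fibers.} For each $\beta_k\in I\cap L$ the paper projects to the Picard-number-one quotient $\phi_k\colon X\to G/P_{R\setminus\{\beta_k\}}$. By Lemma \ref{lem-fiber-bundle-Schubert} every image $\phi_k(X_0)$ is non-linear, hence Schur rigid by Theorem \ref{Schur_HM}. The argument then runs as follows.
\begin{itemize}
\item Translate $Z$ successively, by elements of smaller and smaller Levi subgroups, so that $\phi_k(Z)=\phi_k(X_0)$ for every $k$.
\item This traps $Z$ in the intersection of the preimages $\phi_k^{-1}\phi_k(X_0)$.
\item The combinatorial condition $\bigcap_{\alpha\in I\cap L}C_\alpha\subset L$ of Proposition \ref{prop-intersection-two-roots} makes this intersection equal to $X_0$.
\item A dimension count gives $Z=X_0$ for irreducible $Z$, so $r=1$ and no reassembly step is needed.
\end{itemize}
In the example above, $\phi_{\alpha_3}(X_0)\cong\mathrm{Gr}(2,5)\subset\mathrm{Gr}(3,7)$ and $\phi_{\alpha_4}(X_0)\cong\mathrm{Gr}(3,5)\subset\mathrm{Gr}(4,7)$ are both rigid. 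Fixing both images forces $E_1\subset V_3\subset V_4\subset E_6$, that is, $Z=X_0$.

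Two cases are handled separately. The only configuration where the intersection is larger than $X_0$ is type $D$ or $E$ with $|I\cap L|=2$ (Example \ref{Exam_of_condition_E}). This is treated in Proposition \ref{thm_with_condition_E}, using Lemma \ref{intersection_max} on maximal-dimensional intersections of translates together with a Chow-space argument. Disconnected $L$ is reduced to the connected case by Proposition \ref{product_case}.
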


\begin{remark}
  Note that if the marked roots in $I\cap L$ are allowed to be short roots, non-rigid cases would appear even if we assume other conditions, see for example Case (8) in Theorem \ref{homological_subdiagram}. The condition that $X_0$ is not a fiber bundle over the projective space is natural, from Theorem \ref{Schur_HM}. 
\end{remark}

Finally, we note that, in a different manner, similar problems have been considered by Liu-Sheshmani-Yau in \cite{LSYau24, LSY24} for types $ABD$.

The rest of this paper will be organized as follows. In Section \ref{s.preliminaries}, we give some preliminaries, where we fix some conventions for the forthcoming proof. In Section \ref{homological_higher_picard}, we give the proof of our results on homological rigidity (Theorem \ref{main_thm_simple_ver}, \ref{homological_subdiagram}). In Section \ref{Schur_higher_Pic}, we give the proof of our results on Schur rigidity (Theorem \ref{Schur_subdiagram}).

\subsection*{Acknowledgements}
The authors would like to thank Jaehyun Hong and Ngaiming Mok for helpful discussions.
Ding is supported by a start-up funding from Shenzhen University and Shenzhen Peacock Plan, Li is supported by the Natural Science Foundation of Shandong Province (Grant No. ZR2025QA08) and by the National Natural Science Foundation of China (Grant No. 12571043) .

\section{Preliminaries}\label{s.preliminaries}

\subsection{Semisimple algebraic groups and Lie algebras}\label{semisimple_group}

Let $G$ be a complex connected semisimple algebraic group of rank $r$, $B$ be a Borel subgroup and $T<B$ be a maximal torus. Let $\Sigma$ (resp. $\Sigma^+$, $\Sigma^-$) be the set of roots (resp. positive roots, negative roots) of $G$, and $R\subset\Sigma$ be the set of simple roots, written as $R=\{\alpha_1,\ldots,\alpha_r\}$. For each simple root $\alpha$, let $P_\alpha<G$ be the corresponding minimal parabolic subgroup containing $B$. Given a subset $A$ of $R$, denote by $P_A<G$ the parabolic subgroup generated by those $P_\alpha$ with $\alpha\in A$. Let $P_A^-<G$ be the opposite parabolic subgroup of $P_A$, and $L_A$ be the standard Levi subgroup of them, i.e., $L_A=P_A\cap P_A^-$. In particular $P_\emptyset=B$, $L_\emptyset=T$, and $P_R=L_R=G$.

Let $\mathfrak{t}\subset\mathfrak{b}\subset\mathfrak{g}$ be the Lie algebras of $T<B<G$. Given a subset $A\subset R$, we use $\mathfrak{p}_A$, $\mathfrak{p}^-_A$ and $\mathfrak{l}_A$ to represent the Lie algebras of $P_A$, $P_A^-$ and $L_A$ respectively. Take a root $\alpha$ of $G$. Let $\mathfrak{g}_\alpha$ and $\mathfrak{g}_{-\alpha}$ be the root spaces corresponding to roots $\alpha$ and $-\alpha$ respectively. Note that $[\mathfrak{g}_\alpha, \mathfrak{g}_{-\alpha}]$ is a 1-dimensional subspace of the Cartan subalgebra $\mathfrak{t}$, and we denote it by $\mathfrak{t}_\alpha$ or  $\mathfrak{t}_{-\alpha}$. Now $\mathfrak{g}_\alpha\oplus\mathfrak{t}_\alpha\oplus\mathfrak{g}_{-\alpha}$ is a subalgebra of $\mathfrak{g}$ that is isomorphic to $\fsl_2$, and we denot it by $\fsl_2(\alpha)$. Let $U_\alpha$, $U_{-\alpha}$,  $T_\alpha$ and $G_\alpha$ be the connected algebraic subgroups of $G$ whose Lie algebras are $\mathfrak{g}_\alpha$, $\mathfrak{g}_{-\alpha}$, $\mathfrak{t}_\alpha$ and $\fsl_2(\alpha)$ respectively. Given a subset $A$ of $R$, denote by $U_A$ (resp. $U_A^-$, $T_A$, $G_A$) the connected algebraic subgroups generated by those $U_\alpha$ (resp. $U_{-\alpha}$, $T_\alpha$, $G_\alpha$) with $\alpha\in A$. In particular, $U_A$ and $U_A^-$ are unipotent groups, $T_A$ is a torus, and $G_A$ is the semisimple part of $L_A$.  If $A'$ is another subset of $R$, then $P_{A'}\cap G_A$ (resp. $P_{A'}\cap L_A$) is a parabolic subgroup associated with the subset $A'\cap A$ of the set of simple roots $A$ of $G_A$ (resp. of $L_A$). When there is no danger of confusion, we also use $P_{A'}$ to represent the parabolic $P_{A'}\cap G_A$ (resp. $P_{A'}\cap L_A$) of $G_A$ (resp. $L_A$). For example, by $G_A/P_{A'}$ or $L_A/P_{A'}$ we mean the rational homogeneous space $G_A/(P_{A'}\cap G_A)=L_A/(P_{A'}\cap L_A)$.

\subsection{Schubert varieties}
Let $W$ and $W_P$ be  be the Weyl groups of $G$ and reductive part of $P$ respectively. Then the set of $T$-fixed points in $X = G/P$ is indexed by the set of right cosets $W/W_P$. Let $W^P\cong W/W_P$ be the subset of minimal length representatives of $W/W_P$ in $W$. Let $o$ be the base point of $X$ with respect to $P$, then $w \leftrightarrow x_w:=w.o$ gives the bijection between $W^P$ and the set of $T$-fixed points. The $B$-orbit decomposition of $X$ is given by \[X= \bigsqcup_{w\in W^P}B.x_w\]
Then the closure $\overline{B.x_w}$ is called the Schubert variety of type $w$.

The following propositions are proved in \cite{HM13}, where the Picard number condition is not necessary.

\begin{proposition}[Proposition 2.1 in \cite{HM13}]\label{Schubert closed}
     Let $X = G/P$ be a rational homogeneous space. Let $Z$ be an irreducible closed subvariety of $X$. Consider $Z$ as a point $[Z]$ in the Chow variety $\operatorname{Chow}(X)$ of $X$. Then the $G$-orbit of $[Z]$ is closed if and only if $Z$ is a Schubert variety.
\end{proposition}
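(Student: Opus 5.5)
The plan is to prove both implications separately, using the Borel fixed point theorem and the fact that Schubert varieties are exactly the $B$-stable irreducible closed subvarieties.

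We work in the Chow variety. The group $G$ acts algebraically on $\operatorname{Chow}(X)$, and $\operatorname{Chow}(X)$ is projective, or at least each of its components of fixed dimension and degree is. For $[Z]\in\operatorname{Chow}(X)$, write $\mathcal{O}=G\cdot[Z]$ and let $\overline{\mathcal{O}}$ be its closure, which is a projective $G$-variety. Every element of $\overline{\mathcal{O}}$ is an effective cycle of the same dimension and degree as $Z$.

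\emph{($\Leftarrow$)} Suppose $Z$ is a Schubert variety, say $Z=\overline{B.x_w}$. Then $Z$ is $B$-stable, so the stabilizer $G_{[Z]}$ contains $B$. Hence $G_{[Z]}$ is parabolic, and $\mathcal{O}\cong G/G_{[Z]}$ is a projective variety. A projective orbit is complete. Its image in the separated variety $\operatorname{Chow}(X)$ is therefore closed, so $\mathcal{O}$ is closed.

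\emph{($\Rightarrow$)} Suppose $\mathcal{O}$ is closed, so it is a projective homogeneous $G$-variety.
\begin{enumerate}
\item Apply the Borel fixed point theorem to the action of the connected solvable group $B$ on $\mathcal{O}$. This gives a point $g\cdot[Z]$ fixed by $B$.
\item Since $Z$ is irreducible, $gZ$ is an irreducible $B$-stable closed subvariety of $X$.
\item Show that $gZ$ is a Schubert variety. There are finitely many $B$-orbits $B.x_w$, and $gZ$ is a union of them. By irreducibility, $gZ$ is the closure of a single orbit of maximal dimension, that is, $gZ=\overline{B.x_w}$ for some $w$.
\end{enumerate}
Then $Z=g^{-1}\overline{B.x_w}$ is a translate of a Schubert variety. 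Since the statement is up to the $G$-action, we identify $Z$ with its translate $gZ$, matching the convention used in the paper.

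The step I expect to need the most care is the formal setup: that $G$ acts algebraically on $\operatorname{Chow}(X)$, and that the $B$-fixed point $g\cdot[Z]$ still corresponds to an irreducible reduced cycle. The latter holds because translation by $g$ preserves irreducibility and multiplicity one. The rest follows from the standard facts cited above.
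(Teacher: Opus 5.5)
Your proof is correct. The paper gives no proof of its own and only cites Hong--Mok \cite{HM13}; your argument is the standard one behind that statement. A Schubert variety has a stabilizer containing $B$, hence parabolic, so its orbit is complete and therefore closed. Conversely, a closed orbit is projective, so Borel's fixed point theorem gives a $B$-stable translate $gZ$, and since there are finitely many $B$-orbits, $gZ$ is a single $B$-orbit closure. You are also right that ``Schubert variety'' in the statement must be read up to $G$-translation.
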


The homological rigidity can be reduced to characterize the local deformation of Schubert varieties. More precisely, one has
\begin{proposition}[Proposition 2.2 in \cite{HM13}] \label{reduced_to_local_deform}
    Let $X = G/P$ be a rational homogeneous space and let $X_0$ be a smooth Schubert variety. Suppose that any local deformation of $X_0$ in $X$ is induced by the action of G. Then any subvariety of $X$ having the same homology class as $X_0$ is induced by the action of $G$.
\end{proposition}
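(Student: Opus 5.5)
The plan is to follow the argument of Hong--Mok, which does not use the Picard number assumption. First I reduce to the case where $Z$ is irreducible and reduced. Then I use the Chow variety together with Proposition \ref{Schubert closed} to degenerate $Z$ to a Schubert variety with the same class, and hence to $X_0$. Finally I combine this degeneration with the hypothesis on local deformations to conclude that $Z$ is a translate of $X_0$.

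\textbf{Step 1: reduction.} Since $[X_0]$ is a Schubert class, it is indecomposable in the cone of effective classes: effective classes are nonnegative integer combinations of Schubert classes, and $[X_0]$ is a basis element. Hence a subvariety $Z$ with $[Z]=[X_0]$ is irreducible and reduced as a cycle.

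\textbf{Step 2: degeneration to $X_0$.} Consider the orbit closure $\overline{G\cdot[Z]}$ in $\operatorname{Chow}(X)$; it is projective and $G$-stable. Apply Borel's fixed point theorem to the $B$-action on it: there is a $B$-fixed point $[Z_\infty]\in\overline{B\cdot[Z]}$. Then:
\begin{itemize}
\item every component of $Z_\infty$ is $B$-stable, hence is a Schubert variety;
\item homology classes are constant in the flat family, so $[Z_\infty]=[X_0]$;
\item by Step 1, $Z_\infty$ is a single reduced Schubert variety whose class is $[X_0]$;
\item since Schubert classes form a basis, $Z_\infty=X_0$.
\end{itemize}
Thus $[X_0]\in\overline{G\cdot[Z]}$.

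\textbf{Step 3: local rigidity and conclusion.} Take the irreducible component $\mathcal K$ of $\operatorname{Chow}(X)$ containing $[Z]$; it also contains $[X_0]$. The hypothesis that every local deformation of $X_0$ is induced by $G$ means that near $[X_0]$ every point of $\mathcal K$ lies in the orbit $G\cdot[X_0]$. Hence $G\cdot[X_0]$ contains a neighborhood of $[X_0]$ in $\mathcal K$. Two consequences follow:
\begin{itemize}
\item $G\cdot[X_0]$ is open in $\mathcal K$;
\item by Proposition \ref{Schubert closed}, $G\cdot[X_0]$ is closed, since $X_0$ is a Schubert variety.
\end{itemize}
As $\mathcal K$ is irreducible, hence connected, $\mathcal K=G\cdot[X_0]$, and so $Z=g\cdot X_0$ for some $g\in G$.

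The main obstacle is making "local deformation" precise in Step 3. The Chow variety may be non-reduced or badly singular at $[X_0]$. I would interpret the hypothesis via the germ of the Chow (or Hilbert) scheme at $[X_0]$. Since $X_0$ is smooth, Chow and Hilbert agree near $[X_0]$, and the hypothesis should be read as saying that the germ of $\mathcal K$ at $[X_0]$ equals the germ of the orbit $G\cdot[X_0]$. Openness then follows directly.
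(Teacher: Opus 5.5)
Your argument is correct and follows the same route as the Hong--Mok proof, which the paper cites without reproducing. You degenerate $Z$ inside $\overline{G\cdot[Z]}$ to a $B$-fixed cycle, identify that cycle with $X_0$ using the Schubert basis and the indecomposability of $[X_0]$, and then use that $G\cdot[X_0]$ is both open (by the local-deformation hypothesis) and closed (Proposition \ref{Schubert closed}) in the irreducible Chow component containing $[Z]$; that component contains $[X_0]$ because the connected group $G$ preserves each irreducible component of $\operatorname{Chow}(X)$, and hence contains all of $\overline{G\cdot[Z]}$.
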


\subsection{Marked Dynkin diagrams}\label{marked_Dynkin}

Let $\Gamma$ be the Dynkin diagram of $G$, whose nodes are denoted by the corresponding simple roots $\alpha_1,\ldots,\alpha_r$ respectively. Given a subset $A$ of $R$, denote by $\Gamma_A$ the subdiagram of $\Gamma$ such that the nodes are exact $A$, and an edge of $\Gamma$ lies in $\Gamma_A$ if and only if both ends of this edge are contained in $A$. In particular, $\Gamma_A$ is the Dynkin diagram of $G_A$.

Rational homogeneous spaces can be represented by \emph{marked Dynkin diagrams} in the following way. A rational homogeneous space $X$ under the action of $G$ can be written as $X=G/P_{I^c}$ for a subset $I$ of $R$, where $I^c$ represents the complement $R\setminus I$ of $I$. We mark the nodes $I$ in the Dynkin diagram $\Gamma$, denoted as $\Gamma(I)$ or $\Gamma_R(I)$ for precision. The correspondence has such meaning: each root $\alpha\in I$ corresponds to a $B$-stable prime divisor $D_\alpha$ on $S$, and the Picard group $\Pic(X)$ of $X$ is freely generated by $\mathcal{O}_X(D_\alpha)$. In particular, we can write $\Pic(X)=\oplus_{\alpha\in I}\mathbb{Z}[D_\alpha]$. 

For convenience, we sometimes write $G/P_{I^c}$ as $(G, I)$.


\subsection{Schubert varieties associated with a subdiagram}\label{Preli_Schubert_subdiagram}

Let $L$ be a subset of $R$. Denote by $\Gamma_L(I\cap L)$ or $\Gamma_L(I)$ the Dynkin diagram $\Gamma_L$ with marked nodes $I\cap L$. We call it \emph{a subdiagram of the marked Dynkin diagram} $\Gamma(I)$. The rational homogeneous space $G_L/(P_{I^c}\cap G_L)$ is a smooth Schubert variety of $X$, and it is represented by the subdiagram $\Gamma_L(I\cap L)$ of the marked Dynkin diagram $\Gamma(I)$. A typical example of smooth Schubert varieties associated with a subdiagram is the sub-Grassmannians in Grassmannians.

A subset $L$ of $R$ is said to be \emph{reduced} with respect to $X=G/P_{I^c}$, if there is no proper subset $K$ of $L$ such that the two Schubert varieties $G_K/(G_K\cap P_{I^c})$ and $G_{L}/(G_{L}\cap P_{I^c})$ coincide with each other. In other words, a subset $L$ of $R$ is said to be \emph{nonreduced} with respect to $X=G/P_{I^c}$, if the marked Dynkin diagram $\Gamma_{L}(I)$ has a connected component such that none of its nodes is contained in $I$, the set of marked nodes.


By the \emph{neighbor} of a subset $A$ of $R$, we mean the set $N(A)$ of simple roots outside $\Gamma_A$ which are adjacent to $\Gamma_A$ in the Dynkin diagram $\Gamma_R$, i.e.,
\begin{eqnarray*}
N(A):=\{\alpha\in A^c\mid \mbox{ the Cartan pairing } \langle\alpha, \beta\rangle<0 \mbox{ for some } \beta\in A\}.
\end{eqnarray*}
When the set $A$ consists of a single element, saying $\alpha\in I$, we simply write $N(\alpha)$  to represent $N(\{\alpha\})$.

\begin{lemma}\label{stab.subdiagram}
Let $X=G/P_{I^c}$ be a rational homogeneous space, and $X_0=G_L/(P_{I^c}\cap G_L)$ be a Schubert variety associated with a reduced subset $L$. Then the stabilizer of $X_0$ is the parabolic subgroup $P_{(N(L)\cup I\backslash L)^c}$ of $G$.
\end{lemma}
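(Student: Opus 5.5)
Let $Q:=P_{(N(L)\cup I\backslash L)^c}$ and let $S\subset G$ be the stabilizer of $X_0$; we must show $S=Q$. First, $S$ contains $B$: since $G_L\supset U_{\alpha}$ for $\alpha\in L$ and $T$ normalizes $G_L$, the subvariety $X_0$ is $B$-stable. Indeed $X_0=\overline{B.x_w}$ with $w$ the longest element of $W_L$ modulo $W_L\cap W_{P}$. Hence $S$ is a parabolic subgroup containing $B$, so $S=P_A$ for a unique $A\subset R$. The problem therefore reduces to deciding, for each simple root $\beta$, whether $U_{-\beta}$ (equivalently $P_\beta$) stabilizes $X_0$. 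We will show that $P_\beta$ stabilizes $X_0$ if and only if $\beta\notin N(L)\cup(I\backslash L)$.

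For the inclusion $Q\subseteq S$, we split $\beta\in (N(L)\cup I\backslash L)^c$ into two cases.

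If $\beta\in L$, then $U_{-\beta}\subset G_L$ preserves $X_0=G_L.o$.

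If $\beta\notin L$, then $\beta\notin N(L)$ and $\beta\notin I$. Since $\beta$ is orthogonal to $L$, $G_\beta$ commutes with $G_L$. Since $\beta\notin I$, $G_\beta\subset P_{I^c}$ fixes $o$. Hence $G_\beta.(G_L.o)=G_L.G_\beta.o=G_L.o$.

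For the reverse inclusion, take $\beta\in N(L)\cup(I\backslash L)$; we show $U_{-\beta}$ does not preserve $X_0$. We argue at the level of tangent spaces at $o$: $T_oX_0=\mathfrak{g}_L/(\mathfrak{g}_L\cap\mathfrak{p}_{I^c})$, the sum of the root spaces $\mathfrak{g}_{-\gamma}$ with $\gamma$ a positive root of $\mathfrak{g}_L$ whose support meets $I\cap L$. If $U_{-\beta}$ stabilized $X_0$, then $\mathfrak{g}_{-\beta}$ would preserve $X_0$ and fix $o$ or move it inside $X_0$.

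If $\beta\in I\backslash L$, then $\mathfrak{g}_{-\beta}\not\subset\mathfrak{p}_{I^c}$, so $\mathfrak{g}_{-\beta}$ gives a nonzero tangent vector at $o$. That vector is not in $T_oX_0$, because $\beta\notin L$.

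If $\beta\in N(L)\backslash I$, then $U_{-\beta}$ fixes $o$, so it would act on $T_oX_0$. We choose a component $L_1$ of $L$ adjacent to $\beta$. Reducedness of $L$ gives $L_1\cap I\neq\emptyset$, and $L_1$ is connected, so there is a positive root $\gamma$ supported on $L_1$, meeting $I$, with $\langle\gamma,\beta^\vee\rangle<0$. Concretely, we take a path in $L_1$ from a node adjacent to $\beta$ to a marked node; the sum of those simple roots is a root. Then $-\gamma-\beta$ is a root, so $[\mathfrak{g}_{-\beta},\mathfrak{g}_{-\gamma}]=\mathfrak{g}_{-\gamma-\beta}\neq0$. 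Its image modulo $\mathfrak{p}_{I^c}$ is nonzero, since the support meets $I$, but it lies outside $\mathfrak{g}_L$. Thus $\mathfrak{g}_{-\beta}$ does not preserve $T_oX_0$, a contradiction.

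The main obstacle is this last case. It uses reducedness essentially: without it, a component of $L$ with no marked node contributes nothing to $X_0$, and its neighbors would stabilize $X_0$. Making the tangent-space argument rigorous requires the observation that the isotropy action of $U_{-\beta}\subset P_{I^c}$ on $T_oX$ is induced by $\mathrm{ad}$ on $\mathfrak{g}/\mathfrak{p}_{I^c}$.
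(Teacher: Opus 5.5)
Your proof is correct and follows the paper's structure. To exclude the roots in $N(L)\cup(I\backslash L)$ you use the same tangent-space argument at $o$. You take $\gamma$ as the sum of simple roots along a path in the adjacent component, where the paper takes the highest root of that component. Either choice works, since reducedness ensures the support of $\gamma$ meets $I$.

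The only real difference is how you show $P_{(N(L)\cup I\backslash L)^c}$ stabilizes $X_0$. The paper computes the fiber of a Tits fibration. You instead observe directly that each $G_\beta$ with $\beta\notin N(L)\cup(I\backslash L)$ either lies in $G_L$, or commutes with $G_L$ and fixes $o$, which is more elementary and equally valid.
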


\begin{proof}
First, let us verify that $P_{(N(L)\cup I\backslash L)^c}$ stabilizes $X_0$.
   This can be obtained by the following Tits fibration \cite{Tits67}.
   \begin{eqnarray}\label{diagram-Tits}
\xymatrix{& G/P_{(I\cup N(L))^c}\ar[ld]_{\pi_1}\ar[rd]^{\pi_2}  & \\
G/P_{I^c}  & &G/P_{(N(L)\cup I\backslash L)^c}  \\}
\end{eqnarray}
We know the fiber of $\pi_2$ is isomorphic to 

\begin{eqnarray}\label{eqn-stabilizer}
&& P_{(N(L)\cup I\backslash L)^c}/P_{(I\cup N(L))^c} \\
 &\cong& G_{(N(L)\cup I\backslash L)^c}/ G_{(N(L)\cup I\backslash L)^c}\cap P_{(I\cup N(L))^c} \nonumber\\
     &=& G_{N(L)^c\cap (I\backslash L)^c}/G_{N(L)^c\cap (I\backslash L)^c}\cap P_{I^c}\cap P_{N(L)^c}\nonumber\\
     &=& G_{N(L)^c\cap (I\backslash L)^c}/G_{N(L)^c\cap (I\backslash L)^c}\cap P_{I^c} \nonumber\\ 
 &=& G_L/(P_{I^c}\cap G_L) \nonumber
\end{eqnarray}
 Here we explain the last equality in formula \eqref{eqn-stabilizer}. Note that $L\subset N(L)^c$, we have
\begin{eqnarray*}\label{eqn-N(L)IJ}
\quad\quad\quad N(L)^c\cap (I\backslash L)^c=(N(L)^c\cap I^c)\cup (N(L)^c\cap L)=(N(L)^c\cap I^c)\cup L.
\end{eqnarray*} 
Then we can conclude that
\begin{itemize}
\item[(i)] every connected component of $L$ is also a connected component of $N(L)^c\cap (I\backslash L)^c$;
\item[(ii)] given $\alpha\in N(L)^c\cap (I\backslash L)^c$, we have $\alpha\in I$, i.e., $\alpha$ is a marked root, if and only if $\alpha\in L$.
\end{itemize}
The last equality in \eqref{eqn-stabilizer} follows from assertions (i) and (ii) immediately.

Hence for any $x\in G/P_{(N(L)\cup I\backslash J)^c}$, $\pi^{-1}_2(x)$ is isomorphic to $X_0$, $\pi_1(\pi^{-1}_2(x))$ is a $G$-translate of $X_0\subset X$ and $P_{(N(L)\cup I\backslash J)^c}$ stabilizes $X_0\subset X$.
   
   Let $H<G$ be the stabilizer of $X_0\subset X$. We have shown that $H$ contains the parabolic subgroup $P_{(N(L)\cup I\backslash L)^c}$. Then there exists a subset $A$ of $R$ such that $H=P_A$ and $A\supset (N(L)\cup I\backslash L)^c$. To complete the proof, it remains to show that $N(L)\cup I\backslash L\subset A^c$.

Consider the natural projection $\pi: G\rightarrow X=G/P_{I^c}$ and the tangent map $d_e\pi: \mathfrak{g}=T_eG\rightarrow T_oX$, where $o:=P_{I^c}/P_{I^c}$ is the base point of $X$. Then the inverse image $(d_e\pi)^{-1}(T_oX_0)=\mathfrak{g}_L+\mathfrak{p}_{I^c}$, where $\mathfrak{g}_L$ and $\mathfrak{p}_{I^c}$ are the Lie algebras of $G_L$ and $P_{I^c}$ respectively. Hence for any  $\alpha\in I\backslash L$, we have  $T_eU_{-\alpha}=\mathfrak{g}_{-\alpha}\not\subset (d\pi_e)^{-1}(T_oX_0)$ and thus $T_o(U_{-\alpha}\cdot o)=d\pi_e(T_eU_{-\alpha})\not\subset T_oX_0$. It follows that $U_{-\alpha}\not\subset H=P_A$ and $\alpha\notin A$, implying $I\backslash L\subset A^c$. 

 Take any $\beta\in N(L)\cap I^c$. Then there exists a connected component $\Gamma_K$ of $\Gamma_L$ containing a node adjacent to $\beta$ in the Dynkin diagram of $G$. Let $\gamma$ be the highest root of $G_K$. Then the Cartan pairing $\langle\beta, \gamma\rangle<0$ and thus $\pm(\beta+\gamma)$ are roots of $G$. In particular, $[\fg_{-\beta}, \fg_{-\gamma}]=\fg_{-\beta-\gamma}\neq 0$. Since $L$ is reduced with respect to $X$, we have $\fg_{-\gamma}\subset\fg_L\setminus\fp_{I^c}$. In particular, 
\begin{eqnarray*}
\gamma\in\sum_{\zeta\in L}\Z \zeta \mbox{ and } \gamma\not\in\sum_{\zeta\in I^c}\Z \zeta.
\end{eqnarray*}
Recall that $\beta\in N(L)\cap I^c\subset L^c\cap I^c$. Then we have
\begin{eqnarray*}
\beta+\gamma\not\in \sum_{\zeta\in L}\Z \zeta \mbox{ and } \beta+\gamma\not\in\sum_{\zeta\in I^c}\Z \zeta.
\end{eqnarray*}
Then we can conclude $\fg_{-\beta-\gamma}\nsubseteq\fg_L+\fp_{I^c}$ by considering the weights of these $T$-modules.

Now the $U_{-\beta}$-action on $X$ fixes the base point $o\in X$, 
and the induced $U_{-\beta}$-action on $T_oX$ does not stabilize the subspace $T_oX_0$. Hence $U_{-\beta}$ does not stabilize the submanifold $X_0$ of $X$ and thus $N(L)\cap I^c\subset A^c$. As a consequence, $N(L)\cup I\backslash L=(N(L)\cap I^c)\cup (I\backslash L)\subset A^c$. 
This completes the proof.
\end{proof}

A Schubert variety of subdiagram type can be written as $G_L/(G_L\cap P_{I^c})$ for some $L\subset R$, while the choice of $L$ may not be unqiue. 

For a subset $L$ of $R$, we can define $L_{\rm red}$ to be its \emph{reduction} with respect to $X=G/P_{I^c}$, i.e., $L_{\rm red}$ is reduced and $G_L/(G_L\cap P_{I^c})=G_{L_{\rm red}}/(G_{L_{\rm red}}\cap P_{I^c})$. Define $L_{\rm pro}$ to be $(N(L_{\rm red})\cap I\backslash L_{\rm red})^c$, and we call it the \emph{prolongation} of $L$ (hence also the prolongation of $L_{\rm red}$).

\begin{corollary}
Take two subsets $L, L'$ of $R$. Then the Schubert varieties $G_{L'}/(G_{L'}/\cap P_{I^c})=G_L/(G_L/\cap P_{I^c})$ if and only if $L_{\rm red}\subset L'\subset L_{\rm pro}$.
\end{corollary}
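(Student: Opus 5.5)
Throughout, write $X_{L'}:=G_{L'}/(G_{L'}\cap P_{I^c})=G_{L'}\cdot o\subset X$, where $o$ is the base point. I read the prolongation as $L_{\rm pro}=(N(L_{\rm red})\cup (I\backslash L_{\rm red}))^c$. This is the index set of the stabilizer in Lemma \ref{stab.subdiagram}, and it is the only reading under which the statement can hold. For instance, adding to $L_{\rm red}$ an unmarked neighbor $\beta\in N(L_{\rm red})\cap I^c$ enlarges the Schubert variety, by the root computation $\fg_{-\beta-\gamma}\not\subset \fg_L+\fp_{I^c}$ in the proof of that lemma. The plan is to prove the two implications separately, using the stabilizer description for the upper bound and a tangent-weight argument for the lower bound.

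\emph{Sufficiency.} Assume $L_{\rm red}\subset L'\subset L_{\rm pro}$ and set $X_0=X_{L_{\rm red}}=X_L$. Since $G_{L_{\rm red}}\subset G_{L'}$, we get $X_0\subset X_{L'}$. By Lemma \ref{stab.subdiagram}, applied to the reduced set $L_{\rm red}$, the parabolic subgroup $P_{L_{\rm pro}}$ stabilizes $X_0$. Moreover $G_{L'}\subset G_{L_{\rm pro}}\subset P_{L_{\rm pro}}$ and $o\in X_0$. Hence $X_{L'}=G_{L'}\cdot o\subset X_0$, and the two varieties coincide.

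\emph{Necessity, upper bound.} Suppose $X_{L'}=X_0$. Then $G_{L'}$ stabilizes $X_{L'}=X_0$, so Lemma \ref{stab.subdiagram} gives $G_{L'}\subset P_{L_{\rm pro}}$. For a simple root $\alpha$, we have $U_{-\alpha}\subset P_A$ if and only if $\alpha\in A$, because $P_A$ is generated by $B$ and the $U_{-\zeta}$ with $\zeta\in A$, and its Lie algebra has $T$-weights only in $\Sigma^+\cup(-\sum_{\zeta\in A}\mathbb{Z}_{\ge0}\zeta)$. Since $U_{-\alpha}\subset G_{L'}$ for every $\alpha\in L'$, this yields $L'\subset L_{\rm pro}$.

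\emph{Necessity, lower bound.} This is the step requiring care. I compare tangent spaces at $o$ as $T$-modules. As computed in the proof of Lemma \ref{stab.subdiagram}, $T_oX_{L'}$ is the sum of the $\fg_{-\gamma}$ over positive roots $\gamma\in\sum_{\zeta\in L'}\mathbb{Z}\zeta$ with $\gamma\notin\sum_{\zeta\in I^c}\mathbb{Z}\zeta$. In particular every $T$-weight of $T_oX_{L'}$ has support contained in $L'$.

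Now take $\alpha\in L_{\rm red}$ and let $K$ be the connected component of $L_{\rm red}$ containing $\alpha$. Because $L_{\rm red}$ is reduced, $K$ contains a marked node. The highest root $\gamma_K$ of $G_K$ has all coefficients on $K$ positive, so $\gamma_K$ involves both $\alpha$ and a marked root. Therefore $-\gamma_K$ is a weight of $T_oX_0$. Since $T_oX_0=T_oX_{L'}$, the support of $\gamma_K$ lies in $L'$, so $\alpha\in L'$. This gives $L_{\rm red}\subset L'$ and completes the proof.
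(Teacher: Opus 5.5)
Your proof is correct and has the same skeleton as the paper's. Sufficiency and the bound $L'\subset L_{\rm pro}$ both come from Lemma \ref{stab.subdiagram}, exactly as in the paper; the paper phrases the latter as ``$P_{L'}$ stabilizes $S$''. Your reading of $L_{\rm pro}$ with $\cup$ in place of the printed $\cap$ is also the one the paper actually uses.

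The real difference is the inclusion $L_{\rm red}\subset L'$. The paper gets it in one line, ``by definition of reduction, $L_{\rm red}=L'_{\rm red}\subset L\cap L'$''. That tacitly assumes a Schubert variety of subdiagram type has a unique reduced index set, whereas the definition only says $L_{\rm red}$ is \emph{some} reduced set giving the same variety. Your argument compares $T$-weights of $T_oX_0=T_oX_{L'}$ using the highest root $\gamma_K$ of each component of $L_{\rm red}$. This actually proves that uniqueness, so your version closes a step the paper leaves implicit.
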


\begin{proof}
Denote by $S:=G_L/(G_L/\cap P_{I^c})$.
Suppose $G_{L'}/(G_{L'}/\cap P_{I^c})=S$. By definition of reduction, we have $L_{\rm red}=L'_{\rm red}\subset L\cap L'$. Since $P_{L'}$ stabilizes the Schubert variety $S$ on $X$, then $L'\subset L_{\rm pro}$ by Proposition \ref{stab.subdiagram}. Hence $L_{\rm red}\subset L'\subset L_{\rm pro}$.

Conversely, suppose $L_{\rm red}\subset L'\subset L_{\rm pro}$.  By Proposition \ref{stab.subdiagram} the parabolic subgroup  $P_{L'}$ stabilizes $S\subset X$. In particular, $P_{L'}\cdot  o\subset S$, where $o$ is the base point of $X=G/P_{I^c}$. Note that $G_{L'}/(G_{L'}\cap P_{I^c})=P_{L'}\cdot o\supset P_{L_{\rm red}}\cdot o=S$. It follows that $G_{L'}/(G_{L'}\cap P_{I^c})=S$.
\end{proof}

\subsection{Mori contraction on rational homogeneous spaces}
\label{Mori_contraction_prelimi}
Each Mori contraction on the rational homogenous space $X=G/P_{I^c}$ is of the form $\pi_A: G/P_{I^c}\rightarrow G/P_{I^c\cup A}$ for some subset $A\subset I$. Conversely, for every subset $A$ of $I$, the morphism $\pi_A$ is a Mori fibration. The central fiber of $\pi_A$ is the rational homogeneous space $X^A:=P_{I^c\cup A}/P_{I^c}=G_{I^c\cup A}/(P_{I^c}\cap G_{I^c\cup A})$ associated with the marked subdiagram $\Gamma_{I^c\cup A}(A)$. The morphism $\pi_A$ is nothing else but the natural projection map of principal bundle $G/P_{I^c}=G\times^{P_{I^c\cup A}} X^A\rightarrow G/P_{I^c\cup A}$. 

In terms of marked Dynkin diagrams, we give the following example, where $G=A_5, I=\{\alpha, \beta\}$, the central fibers of the projection maps $\pi_\alpha, \pi_\beta$ correspond to the subdiagrams in the dotted and dashed boxes respectively.
    \begin{center}
  \begin{tikzpicture}[scale=0.5]
  \draw[thick] (0,0) -- (6, 0)  ; 
\draw[ thick, fill=white] (0,0) circle (3pt) node[above, outer sep=3pt]{};
		\draw[ thick, fill=black] (1.5,0) circle (3pt) node[above, outer sep=3pt]{$\beta$};
        \draw[ thick, fill=white] (3,0) circle (3pt) node[above, outer sep=3pt]{};
        \draw[ thick, fill=black] (4.5,0) circle (3pt) node[above, outer sep=3pt]{$\alpha$};
         \draw[ thick, fill=white] (6,0) circle (3pt) node[above, outer sep=3pt]{};

          \draw[thick,dashed] (-0.4,0.7) -- (3.4,0.7)  ;
          \draw[thick,dashed] (-0.4,-0.7) -- (3.4,-0.7)  ;
           \draw[thick,dashed] (-0.4,0.7) -- (-0.4,-0.7)  ;
             \draw[thick,dashed] (3.4,0.7) -- (3.4,-0.7)  ;

        \draw[thick,dotted] (2.6,0.9) -- (6.4,0.9)  ;
          \draw[thick,dotted] (2.6,-0.9) -- (6.4,-0.9)  ;
           \draw[thick,dotted] (2.6,0.9) -- (2.6,-0.9)  ;
             \draw[thick,dotted] (6.4,0.9) -- (6.4,-0.9)  ;
	\end{tikzpicture} 	
   \end{center} 

\subsection{The Billey-Postnikov decomposition of a smooth Schubert variety}\label{BP_decom_Schubert}
When $X=G/P_{I^c}$ is of Picard number one, we know that all smooth Schubert varieties are either of subdiagram type or some non-homogeneous horospherical varieties by suitable embeddings, where the latter only appears when the marked root is short (cf. \cite[Proposition 3.7]{HM13}, \cite[Theorem 1.2, Theorem 1.3]{HK19}). In higher Picard number case, by the result of Rchmond and Slofstra \cite[Theorem 3.3, Theorem 3.6, Corollary 3.7]{RS16}, a smooth Schubert variety can be described as an iterated fiber bundle such that the central fiber of each step is a smooth Schubert variety in rational homogeneous spaces of Picard number one (which are called Grassmannian Schubert varieties in \cite{RS16}).

More precisely, a smooth Schubert variety has an iterated fiber bundle structure such that each step is a subbundle of $G/P_{I^c\cup\{\beta_1,\ldots,\beta_{t-1}\}}\rightarrow G/P_{I^c\cup\{\beta_1,\ldots,\beta_t\}}$ for some $\beta_1,\ldots,\beta_t\in I$ over a smooth Schubert variety on the base $G/P_{I^c\cup\{\beta_1,\ldots,\beta_t\}}$. The choice of these $\beta_j$ depends on some combinatorial information related to the Schubert variety.

 Let $J=I^c$ and $X=G/P_J$. Let $W$ and $W_J$ be the Weyl groups of $G$ and reductive part of $P_J$ respectively. Then Schubert varieties on $X$ can be indexed by $W/W_J$. Let $W^J \cong W/W_J$ be the subset of minimal length representatives of $W/W_J$ in $W$. The Schubert variety associated to $w\in W^J$ is written as 
\[
\begin{aligned}
    X^J(w):&=\overline{BwP_J/P_J} \\
    &=\bigsqcup_{w'\leq w, w'\in W^J} Bw'P_J/P_J,
\end{aligned}
\]
where $\leq $ denotes the Bruhat order on $W$.

If $J \subset K \subset R$, then every element can be written uniquely as $w=vu$ where $v\in W^K$ and $u\in W_K \cap W^J$. This is called the (right) parabolic decomposition of $w$ with respect to $K$. The Poincar\'{e} polynomial $P^{J}_w(t)$ of an element $w\in W^J$ is defined as
\[ P_w^J(t):=\sum_{w'\leq w, w'\in W^J}t^{\ell(w')}\]
where $\ell(w')$ denotes the length of $w'$.

\begin{definition}
Let $w\in W^J$ and $w=vu$ be a parabolic decomposition with respect to $K$, where $J\subset K\subset R$. We say $w=vu$ is a Billey-Postnikov decomposition with respect to $(J,K)$ if 
\[P^J_w(t)=P^K_v(t)\cdot P_u^{J}(t).\]

\end{definition}
In \cite{RS16}, it is proven that
\begin{theorem}[from Theorem 3.3, Theorem 3.6 in \cite{RS16}]\label{smooth_BP}
    Let $w\in W^J$ and $w=vu$ be a parabolic decomposition with respect to $K$. Then under the Mori contraction $\pi_{K\backslash J}$, $\pi_{K\backslash J}: X^{J}(w)\rightarrow X^K(v)$ is Zariski-locally trivial with fiber $X^J(u)$ if and only if $w=vu$ is a Billey-Postnikov decomposition with respect to $(J,K)$. Moreover if $X^J(w)$ is smooth and its Picard number is at least 2, then there always exists a simple root $\alpha$ and a Billey-Postnikov decomposition with respect to $(J,K)$ where $K=J\cup \{\alpha\}$.
\end{theorem}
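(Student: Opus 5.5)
The plan is to treat the two assertions separately. The first, the equivalence between Zariski-local triviality and the Billey--Postnikov condition, is geometric and comes from comparing Schubert cells fiberwise. The second, existence of a Billey--Postnikov decomposition with $|K\backslash J|=1$ for smooth $X^J(w)$ of Picard number at least $2$, is combinatorial, and I expect it to be the main obstacle.

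\emph{Step 1: surjectivity and the generic fiber.} The map $\pi=\pi_{K\backslash J}$ is $B$-equivariant and sends $BwP_J/P_J$ onto $BvP_K/P_K$. Since $X^J(w)$ is closed and irreducible, $\pi(X^J(w))=X^K(v)$. The base has the cell decomposition $X^K(v)=\bigsqcup_{v'\leq v,\,v'\in W^K}Bv'P_K/P_K$. By $B$-equivariance, the fibers over a given cell are all $B$-translates of the fiber $F_{v'}:=\pi^{-1}(v'P_K/P_K)\cap X^J(w)$. Each $F_{v'}$ is a union of cells $Bw'P_J/P_J$ with $w'=v'u'\leq w$ and $u'\in W_K\cap W^J$.

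\emph{Step 2: the fiber always contains $v'X^J(u)$.} Since $\ell(w)=\ell(v)+\ell(u)$, concatenate reduced words for $v$ and $u$ to get a reduced word for $w$. For $v'\leq v$ and $u'\leq u$, the subword property gives $v'u'\leq w$, and $\ell(v'u')=\ell(v')+\ell(u')$ because $v'\in W^K$ and $u'\in W_K$. Hence $v'X^J(u)\subseteq F_{v'}$ for every $v'\leq v$. Counting cells, this yields the coefficientwise inequality $P^J_w(t)\geq P^K_v(t)P^J_u(t)$.

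\emph{Step 3: the equivalence.} Equality of Poincar\'e polynomials holds if and only if $F_{v'}=v'X^J(u)$ for every $v'\leq v$.
\begin{itemize}
\item If $\pi$ is Zariski-locally trivial, all fibers are isomorphic to the generic fiber $F_v=vX^J(u)$. Comparing dimensions and cell counts over each cell then forces equality in Step 2.
\item Conversely, suppose $F_{v'}=v'X^J(u)$ for all $v'\leq v$. Trivialize $G/P_J\to G/P_K$ over the translates $gU^-_{K^c}vP_K/P_K$ using the section furnished by the unipotent radical of the opposite parabolic. One must check that under this trivialization $X^J(w)$ becomes a product with $X^J(u)$. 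Since all fibers are translates of $X^J(u)$ by elements depending algebraically on the base point, this reduces to showing that the family of fibers is flat with constant fiber. Flatness I would deduce from equidimensionality over the normal base together with the constancy of the Hilbert polynomial, the latter read off from the cell counts.
\end{itemize}

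\emph{Step 4: existence for smooth $X^J(w)$.} The Picard number of $X^J(w)$ equals the number of simple roots $\alpha\in I$ lying in the support of $w$. When this is at least $2$, I would first try taking $\alpha$ to be a suitable \emph{leaf} of the support of $w$ in $I$, and $K=J\cup\{\alpha\}$. By smoothness and the Carrell--Peterson criterion, $P^J_w$ is palindromic; the aim is to show this forces the factorization $P^J_w=P^K_vP^J_u$. I expect this step to require real combinatorial input: either a pattern-avoidance description of smooth Schubert varieties, or the case analysis of Richmond--Slofstra via Grassmannian BP decompositions (where the factor $X^K(v)$ has Picard number one in a maximal parabolic). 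Palindromicity alone does not obviously suffice, so a case analysis type-by-type, organized by the Dynkin diagram, is what I would fall back on.
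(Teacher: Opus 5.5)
The paper does not prove this statement; it quotes it from Richmond--Slofstra \cite{RS16}. Your proposal is therefore an attempt to reprove their Theorems 3.3 and 3.6. Steps 1--2 and the forward half of Step 3 are the standard cell count and are fine.

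\textbf{Main gap: Step 4.} The ``moreover'' clause is the substantive half of the theorem, and Step 4 lists candidate tools rather than giving an argument. Steps 1--3 only \emph{characterize} BP decompositions; they say nothing about existence. Palindromicity of $P^J_w$ adds nothing beyond the rational-smoothness hypothesis under which \cite{RS16} work. In \cite{RS16} the existence rests on the Billey--Postnikov results on patterns in root subsystems, which is the deep combinatorial input you do not supply. What \cite{RS16} produce is a \emph{Grassmannian} BP decomposition, in which the \emph{base} $X^K(v)$ has Picard number one. This is also the form the paper actually uses in items (ii)--(iii) of the inductive step of Proposition~\ref{prop homology rigidity reduction}. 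If you aim at $K=J\cup\{\alpha\}$ as literally written, you must at least require $\alpha\in S(w)\backslash J$. Otherwise any $\alpha\notin S(w)\cup J$ gives a BP decomposition with $u=e$ for free.

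\textbf{Secondary gap: the converse in Step 3.} Trivializing by $v'U^-_{P_K}v'^{-1}$ only identifies the fibre over $nv'P_K$ with $v'p(n)X^J(u)$, where $p(n)\in P_K$ is defined modulo $Q:=\operatorname{Stab}_G(X^J(u))$. So the claim that the translating elements ``depend algebraically on the base point'' is exactly what must be proved.

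Your route to it, flatness from constancy of Hilbert polynomials ``read off from the cell counts'', does not work. Cell counts only control the \emph{reduced} fibres. The Hilbert-polynomial criterion concerns the scheme-theoretic fibres $X^J(w)\times_{X^K(v)}\{x\}$, which over smaller cells could a priori be non-reduced. Equidimensionality over a normal but singular base does not give flatness either.

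A clean repair avoids flatness altogether.
\begin{enumerate}
\item $Q$ is a parabolic subgroup contained in $P_K$, and $Y:=\{gQ\in G/Q:\ gX^J(u)\subseteq X^J(w)\}$ is closed and $B$-stable.
\item Your fibre condition, together with irreducibility and equality of dimensions, makes $Y\to X^K(v)$, $gQ\mapsto gP_K$, bijective. Hence $Y=\overline{BvQ}/Q$.
\item Since $X^K(v)$ is normal, Zariski's Main Theorem makes this map an isomorphism, and its inverse is a classifying morphism $X^K(v)\to G/Q$.
\item $X^J(w)$ is then the pullback of $G\times^Q X^J(u)\to G/Q$; the comparison map is bijective onto the normal variety $X^J(w)$, so it is again an isomorphism.
\item This pullback is Zariski-locally trivial because $G\to G/Q$ is.
\end{enumerate}
Alternatively, \cite{RS16} reformulate the BP condition combinatorially, in terms of $S(v)\cap K$ and left descents of $u$. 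That makes the fibre stable under a parabolic subgroup and exhibits $X^J(w)$ directly as an associated bundle.
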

\begin{remark}
    Note that in the subdiagram case, any choice of $K\supset J$ would give a Billey-Postnikov decomposition with respect to $(J,K)$.
\end{remark}
From Theorem \ref{smooth_BP} one can see that a smooth Schubert variety $X^{J}(w)$ can be written as iterated fiber bundles of the following form
\[X^{J}(w)=X_0 \rightarrow X_1 \rightarrow \cdots \rightarrow X_{m-1}\rightarrow X_m=\operatorname{pt}\]
where each morphism is a Zariski-locally trivial fiber bundle with respect to some Billey-Postnikov decomposition and the fibers are smooth Schubert varieties in some rational homogeneous spaces of Picard number one. For the convenience of discussions, such a smooth Schubert variety in a rational homogeneous space of Picard number one is called a \textit{pair}. Also, the iterated fiber bundle structure is called the Billey-Postnikov decomposition of a smooth Schubert variety.

\begin{example}
Let $X=G/B$ be the full flag variety of $\mathbb{C}^3$, where $G={\rm PGL}(3)$ and $B$ is a Borel subgroup of $G$. Let $E_1\subset E_2 \subset \mathbb{C}^3$ the the standard flag and
\[X^{\emptyset}(w)=\{ F^{\bullet}: F_2\supset E_1 \} \cong \mathbb{F}_1.\]
 The two Mori contractions of $G/B$ are \[\pi_1 : X=G/B\rightarrow G/P_1=\{F_1: F_1\subset \mathbb{C}^3\}\cong \mathbb{P}^2\] and \[\pi_2 : X=G/B\rightarrow G/P_2=\{F_2: F_2\subset \mathbb{C}^3\}\cong \mathbb{P}^2.\]
 Under $\pi_1$ we have $\pi_1(X^{\emptyset}(w))=X^1(v)=G/P_1$, and \[\pi_1^{-1}(F_1)=\begin{cases}
     \{F^{\bullet}: F_1=E_1\}\cong \mathbb{P}^1,  (F_1= E_1)\\
     \{F^\bullet: F_2=F_1+E_1\}\cong \operatorname{pt}, (F_1\neq E_1)
 \end{cases};\] 
  Under $\pi_2$ we have $\pi_2(X^{\emptyset}(w))=X^2(v')=\{F_2: F_2\supset E_1\}\cong \mathbb{P}^1$, and \[\pi_2^{-1}(F_2)=\{F_1: F_1\subset F_2\}\cong \mathbb{P}^1.\]

Hence $\pi_1 : X^{\emptyset}(w)\rightarrow X^1(v)=G/P_1$ is not a Zariski-locally trivial fiber bundle but $\pi_2: X^{\emptyset}(w)\rightarrow X^2(v')$ is.

  If we look at the Schubert indices, we know $w=s_1s_2, v=s_1s_2, u=\operatorname{id}$. Then we have $P^{\emptyset}_w(t)=t^2+2t+1$ and $P_v^1(t)=t^2+t+1, P^\emptyset_u(t)=1$ and hence $w=vu$ is not a Billey-Postnikov decomposition. On the other hand, $v'=s_1, u'=s_2$, we have $P_{v'}^2(t)=t+1, P^\emptyset_{u'}(t)=t+1$ and hence $w=v'u'$ is a Billey-Postnikov decomposition.
\end{example}

\section{Homological rigidity in higher Picard number}\label{homological_higher_picard}

\subsection{The cases of Picard number one}

From Proposition \ref{reduced_to_local_deform}, homological rigidity can be reduced to the characterization of local deformation of the smooth Schubert variety.
\begin{definition}\label{exceptional_pair}
    Let $J=I^c$ and $X=G/P_J$ be a rational homogeneous space of Picard number one and $X_0 \subset X=G/P_J$ be a smooth Schubert variety. We say $(X_0, X)$ is an exceptional pair if there exists some local deformation of $X_0$ which is not obtained from the group action of $G$.
\end{definition}
\begin{remark}\label{covering_pair}
    If $(X_0,X)$ is not homologically rigid, then by Proposition \ref{reduced_to_local_deform}, it is an exceptional pair. Conversely, an exceptional pair $(X_0, X)$ is not homologically rigid, unless $X=G/P$ is isomorphic to $G'/P'$ such that $G'={\rm Aut_0}(X)$ contains $G$ as a proper subgroup. All these cases can be listed as follows.

\begin{lemma}[cf. \cite{HM13}]\label{lem-aut-larger}   
A rational homogeneous space $X=G/P$ is isomorphic to $G'/P'$ such that $G'={\rm Aut_0}(X)$ contains $G$ as a proper subgroup if and only  if one of the following holds:
    \begin{enumerate}
        \item $X=(B_n, \{\alpha_n\})\cong (D_{n+1}, \{\alpha_n\})$; 
        \item $X=(C_n, \{\alpha_1\}) \cong (A_{2n-1}, \{\alpha_1\})$;
        \item $X=(G_2, \{\alpha_1\}) \cong (B_3, \{\alpha_1\})$.
    \end{enumerate}
\end{lemma}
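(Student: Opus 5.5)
The lemma is a classification statement, and I will reduce it to Onishchik's theorem on the connected automorphism groups of rational homogeneous spaces. The first step is to normalize. Write $X=G/P$ with $G$ semisimple, and decompose $G$ into simple factors. Then $X$ splits as a product of the spaces $G_i/P_i$ taken over those simple factors $G_i$ that are not contained in $P$; factors lying inside $P$ act trivially and may be discarded. By the Künneth/Blanchard lemma, ${\rm Aut_0}(X)$ is the product of the groups ${\rm Aut_0}(G_i/P_i)$, and non-isomorphic factors are never mixed by connected automorphisms. Hence ${\rm Aut_0}(X)$ is strictly larger than (the image of) $G$ if and only if this already happens for some simple factor. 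For simple $G$ acting effectively (after passing to the adjoint group), Onishchik's theorem states that ${\rm Aut_0}(G/P)=G_{\rm ad}$ except in exactly three cases: the odd-dimensional projective space $(C_n,\{\alpha_1\})$ with automorphism group $PGL_{2n}$; the spinor variety $(B_n,\{\alpha_n\})$ with automorphism group $PSO_{2n+2}$; and the five-dimensional quadric $(G_2,\{\alpha_1\})$ with automorphism group $PSO_7$.

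With this reduction in hand, the "if" direction is direct. I will exhibit the three isomorphisms and check that the inclusion of groups is proper. In case (2), $Sp_{2n}$ acts transitively on $\mathbb{P}^{2n-1}$, and the stabilizer is the parabolic subgroup marked at $\alpha_1$. In case (1), the isotropic $n$-planes in $\mathbb{C}^{2n+1}$ correspond to one family of maximal isotropic $(n+1)$-planes in $\mathbb{C}^{2n+2}$, which gives $(B_n,\{\alpha_n\})\cong(D_{n+1},\{\alpha_n\})$. In case (3), $G_2\subset SO_7$ acts transitively on the quadric $Q^5$. Properness of the inclusion is a dimension count in each case, for example $\dim Sp_{2n}<\dim SL_{2n}$.

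The "only if" direction is Onishchik's classification itself, once the remark about products has been made. I will cite it as used in \cite{HM13}, so the only care needed is in the reduction step. There, two points must be checked. First, an automorphism group strictly larger than $G$ must come from one factor. Second, the product decomposition is respected, because ${\rm Aut_0}$ of a product of Fano homogeneous spaces preserves the Mori contractions onto the factors.

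The main obstacle is Onishchik's theorem, which is deep and which I take as known. If it had to be proved, I would compare dimensions: a strictly larger group $G'$ acting transitively, with $G\subset G'$ also acting transitively on $X$, corresponds to a factorization $G'=G\cdot P'$. Such factorizations are classified by Onishchik's theory of inclusions of transitive actions, and running through the simple groups yields exactly these three cases.
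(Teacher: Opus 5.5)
Your proposal is correct and follows essentially the paper's route. The paper gives no argument beyond citing \cite{HM13}, where this is the Onishchik--Demazure theorem that ${\rm Aut_0}(G/P)=G_{\rm ad}$ for simple $G$ except in the three listed cases, and your explicit isomorphisms for the ``if'' direction are the standard ones. One small caveat: your product reduction actually shows that for non-simple $G$ the right criterion is that \emph{some simple factor} $G_i/P_i$ is on the list (e.g.\ $\mathbb{P}^{2n-1}\times\mathbb{P}^1$ under $Sp_{2n}\times SL_2$), so the lemma as stated should be read with $G$ simple, as the paper's $(G,I)$ notation intends.
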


    In these cases $X$ is isomorphic to some $G'/P'$ which is marked at a long root, hence  by Theorem \ref{HM_homological} homological rigidity still holds. We define the exceptional pair in a broader sense, for the reason that when we discuss the higher Picard number case,  such isomorphism of rational homogeneous spaces is no longer appearing, and we need the group actions to be induced from $G$ in each step of the iterated fiber bundle associated with the Schubert variety in the proof of Proposition \ref{prop homology rigidity reduction}. 
\end{remark}

From \cite[Theorem 1.1, Theorem 1.2]{HM13}, \cite[Theorem 1.4]{HK19}, all other exceptional pairs come from the cases when $X=G/P_J$ is associated to a short root and $X_0$ is a linear subspace. In particular, we have

\begin{proposition}\label{prop-exceptional-pair}
Let $(X_0, X)$ be an exceptional pair associated with a subdiagram $\Gamma_L(I\cap L)$ of the marked Dynkin diagram $\Gamma(I)$. Then one of the following holds:
     \begin{enumerate}
    \item $X$ is in the list of Lemma \ref{lem-aut-larger} and $0<\dim X_0<\dim X$;
        \item $X=(C_n,\{\alpha_k\}), L=\{\alpha_{k},\alpha_{k+1},\cdots \alpha_{b-1}\}, 2\leq k< b\leq n$;
        \item  $X=(F_4,\{\alpha_3\}), L=\{\alpha_{2}, \alpha_3\}$ or $\{\alpha_3\}$;
        \item $X=(F_4,\{\alpha_4\}), L=\{\alpha_3,\alpha_4\}$ or $\{\alpha_4\}$,
    \end{enumerate}
         where $(G,I)$ denotes the pair of algebraic group and the set of marked roots determining $G/P_{I^c}$.
\end{proposition}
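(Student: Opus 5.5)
The plan is to derive this from the classification of exceptional pairs in \cite[Theorem 1.1, Theorem 1.2]{HM13} and \cite[Theorem 1.4]{HK19}, restricted to Schubert varieties of subdiagram type in Picard number one. Write $I=\{\alpha\}$, and replace $L$ by its reduction, so that $L$ is connected and contains $\alpha$; this changes neither $X_0$ nor its deformations. Exceptional pairs then split according to whether $X$ appears in Lemma \ref{lem-aut-larger}.

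\emph{Case A: $X$ is in the list of Lemma \ref{lem-aut-larger}.} Here we only need to exclude $\dim X_0=0$ and $X_0=X$. Both a point and $X$ itself have only trivial local deformations: a point moves only by translation, which is induced by $G$ because $G$ acts transitively, and $X$ cannot deform at all. So every exceptional pair in this case satisfies $0<\dim X_0<\dim X$, which is item (1).

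\emph{Case B: $X$ is not in that list.} By Remark \ref{covering_pair}, an exceptional pair is then not homologically rigid. Theorem \ref{HM_homological} therefore forces $\alpha$ to be short and $X_0$ to be linear. The short marked roots occur in types $B_n$ ($\alpha_n$), $C_n$ ($\alpha_1,\dots,\alpha_{n-1}$), $F_4$ ($\alpha_3,\alpha_4$) and $G_2$ ($\alpha_2$). Each of these must be checked separately, using the facts that $X_0=G_L/P$ is linear exactly when $\Gamma_L(\alpha)$ is a projective space diagram, and that the non-rigid linear spaces are listed explicitly in \cite[Theorem 1.2]{HM13} and \cite{HK19}.
\begin{itemize}
\item For $(B_n,\alpha_n)$, $(C_n,\alpha_1)$ and $G_2$ with the relevant marking: these spaces are in Lemma \ref{lem-aut-larger} or are already covered there.
\item For $(C_n,\alpha_k)$ with $2\le k\le n-1$: $\Gamma_L(\alpha_k)$ is of type $A$ with $\alpha_k$ at an end. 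Taking $L=\{\alpha_k,\dots,\alpha_{b-1}\}$ with $b\le n$ gives the linear spaces on the side of the short roots, which are the non-rigid ones and give item (2). Taking $L=\{\alpha_j,\dots,\alpha_k\}$ gives linear spaces through the other end; these are rigid by \cite{HM13}.
\item For $F_4$ marked at $\alpha_3$ or $\alpha_4$: the linear subdiagrams containing the marked node with the node at an end are exactly those in items (3) and (4). The other connected subdiagrams either give a non-linear $X_0$ or run through the long part, for example $\{\alpha_1,\alpha_2,\alpha_3\}$ of type $B_3$, which is marked at a short node and is not a projective space.
\end{itemize}

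The main obstacle is the bookkeeping in Case B. For each short-root case I must match the list in \cite{HM13,HK19} (which is phrased via $\Lambda$ and the linear-space types) against the subdiagram description, and check the direction: the statement only requires the implication ``exceptional $\Rightarrow$ listed'', so I only need to exclude everything else. That exclusion rests on the rigidity results cited above.
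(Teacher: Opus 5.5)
Your overall strategy is the paper's. The paper gives no argument beyond citing \cite[Theorem 1.1, Theorem 1.2]{HM13} and \cite[Theorem 1.4]{HK19} (outside Lemma \ref{lem-aut-larger}, exceptional pairs are linear spaces in spaces marked at a short root) and reading off the list. Your Case A is fine, and so is the chain ``exceptional $\Rightarrow$ not homologically rigid $\Rightarrow$ $\alpha$ short and $X_0$ linear'' in Case B.

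The gap is the final matching step, which you make explicit and which is wrong. You overlook that a type $C$ diagram marked at its first node is linear: $(C_m,\alpha_1)\cong\mathbb{P}^{2m-1}$ (Lemma \ref{lem-aut-larger}(2)).
\begin{itemize}
\item In $(C_n,\alpha_k)$, $2\le k\le n-1$, it is false that $\Gamma_L(\alpha_k)$ must be of type $A$. The subdiagram $L=\{\alpha_k,\dots,\alpha_n\}$ gives the maximal linear subspace $Y\cong\mathbb{P}^{2n-2k+1}$ of Lemma \ref{C_n_exceptional}.
\item In $(F_4,\alpha_4)$, $L=\{\alpha_2,\alpha_3,\alpha_4\}$ is of type $C_3$ marked at its first node, so $X_0\cong\mathbb{P}^5$. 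This contradicts your heuristic that subdiagrams running through the long part are not projective spaces; already $\{\alpha_2,\alpha_3\}$ in $(F_4,\alpha_3)$ gives a $\mathbb{P}^3$.
\item Your claim that the end-marked linear subdiagrams of $F_4$ are exactly those of items (3),(4) fails even in type $A$. In $(F_4,\alpha_3)$, $L=\{\alpha_3,\alpha_4\}$ gives a $\mathbb{P}^2$ that is not in item (3).
\end{itemize}

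So ``$\alpha$ short and $X_0$ linear'' does not imply ``listed''. The pairs given by $\{\alpha_k,\dots,\alpha_n\}$ in $(C_n,\alpha_k)$, $\{\alpha_3,\alpha_4\}$ in $(F_4,\alpha_3)$ and $\{\alpha_2,\alpha_3,\alpha_4\}$ in $(F_4,\alpha_4)$ are linear, marked at a short root, and absent from the list. The proposition asserts precisely that they are \emph{not} exceptional. Your argument never rules them out, and Theorem \ref{HM_homological} cannot, since it says nothing about short-root linear spaces.

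What is needed is the finer part of the Hong--Mok classification. For $C_n$ this is the ``only if $X_0\subsetneq g\cdot Y$'' direction of Lemma \ref{C_n_exceptional}: the maximal space $Y$ itself is rigid, and only its proper linear subspaces are not. You also need the corresponding statements for $F_4/P_3$ and $F_4/P_4$. Linearity alone cannot decide these cases. In $F_4/P_3$ the $\mathbb{P}^3$ of $\{\alpha_2,\alpha_3\}$ is exceptional, because it lies in $(B_3,\alpha_3)\cong(D_4,\alpha_3)$ and moves under $D_4$. The $\mathbb{P}^2$ of $\{\alpha_3,\alpha_4\}$, in the same space, is not exceptional.

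A minor slip as well: with Bourbaki's labels the short root of $G_2$ is $\alpha_1$, and the entry of Lemma \ref{lem-aut-larger} is $(G_2,\{\alpha_1\})$.
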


Note that all the cases (2)(3) and (4) in Proposition \ref{prop-exceptional-pair} are exceptional pairs, while whether the cases in Proposition \ref{prop-exceptional-pair} (1) are exceptional pairs depends on the choice of the algebraic group $G$ associated with $X$.

   For later convenience, we also provide the following lemmas.
\begin{lemma}\label{C_n_exceptional}
    Let $X=(G,I)=(C_n,\{\alpha_k\}), 1\leq k\leq n-1$, and $L=\{\alpha_{k},\alpha_{k+1},\cdots \alpha_{n}\}$.
    Let $Y$ be the homogeneous subspace associated with the subdiagram $\Gamma_L(\alpha_k)$, which is a maximal linear subspace in $X$. Then $(X_0, X)$ is an exceptional pair \textbf{(not necessarily associated with a subdiagram)}, if and only if $X_0\subsetneq g\cdot Y$ is a linear subspace for some $g\in G$. 
\end{lemma}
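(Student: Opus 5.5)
The plan is to prove both directions by combining the classification of smooth Schubert varieties in $X=(C_n,\{\alpha_k\})$ with the Hong--Mok/Hong--Kwon results on which of them are deformation rigid. Here $X$ is the symplectic Grassmannian of isotropic $k$-planes in $\mathbb{C}^{2n}$. The subdiagram $\Gamma_L(\alpha_k)$ with $L=\{\alpha_k,\dots,\alpha_n\}$ is the Lagrangian-type piece. More precisely, $Y=G_L/(G_L\cap P_{I^c})$ is the space of isotropic $k$-planes containing a fixed isotropic $(k-1)$-plane $E_{k-1}$. Such a plane is determined by a line in $E_{k-1}^\perp/E_{k-1}\cong\mathbb{C}^{2(n-k+1)}$, and every line there is isotropic. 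So $Y\cong\mathbb{P}^{2n-2k+1}$, and it is a maximal linear subspace of $X$ under the Plücker embedding.

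For the ``if'' direction, let $X_0\subsetneq g\cdot Y$ be a linear subspace, so $X_0\cong\mathbb{P}^m$ with $m<2n-2k+1$. After a $G$-translate we may take $X_0\subset Y$. The key observation is that $G_L\cong Sp(2n-2k+2)$ acts on $Y=\mathbb{P}^{2n-2k+1}$ non-transitively on its linear subspaces, while the full automorphism group $PGL(2n-2k+2)$ of $Y$ moves $\mathbb{P}^m$ freely. Concretely, a general $\mathbb{P}^m\subset\mathbb{P}(E_{k-1}^\perp/E_{k-1})$ has a different isotropy type than the Schubert one. For example, the Schubert $\mathbb{P}^m$ sits in a $B$-stable isotropic or coisotropic flag; a general one has nondegenerate restricted form (when $m\ge1$, up to parity). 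One should check that the Schubert linear subspace is the \emph{special} one, meaning it has the most degenerate restriction. Moving $X_0$ inside $Y$ by a one-parameter family in $PGL$ then gives a local deformation in $X$ that is not a $G$-translate: $G$-translates preserve the rank of the restricted symplectic form on the underlying subspace of $\mathbb{C}^{2n}$ modulo $E_{k-1}$. Thus $(X_0,X)$ is exceptional. The boundary case where $X_0$ is a point must be excluded or handled separately, which is consistent with the $0<\dim X_0$ convention in Proposition \ref{prop-exceptional-pair}.

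For the ``only if'' direction, suppose $(X_0,X)$ is exceptional. By \cite[Theorem 1.1, 1.2]{HM13} and \cite[Theorem 1.4]{HK19}, nonlinear smooth Schubert varieties in Picard number one are rigid, as are the non-homogeneous horospherical ones. So $X_0$ is a linear Schubert subspace $\mathbb{P}^m$. The remaining step is to classify linear spaces on $X$ through the base point. A linear space in the symplectic Grassmannian is of one of two types:
\begin{itemize}
\item[(a)] the $k$-planes containing a fixed $(k-1)$-plane $F$ and contained in a fixed subspace of $F^\perp$;
\item[(b)] the $k$-planes contained in a fixed isotropic $(k+1)$-plane and containing a fixed $(k-m)$-plane.
\end{itemize}
Type (b) lines to $\mathbb{P}^m$ are those of the $A$-type subdiagram $\{\alpha_{k-m+1},\dots,\alpha_k\}$, marked at a long-root-like position. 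These are rigid by \cite{HM13}, since their VMRT-based deformation argument works there. Type (a) spaces lie in a translate of $Y$, which gives the conclusion $X_0\subsetneq g\cdot Y$, the inclusion being proper because $Y$ itself is homogeneous and hence rigid. An overlap occurs when $m=1$: a line is of both types, and it lies in some $g\cdot Y$, so the claim still holds.

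The main obstacle is the precise bookkeeping in the ``only if'' direction: verifying from the cited exceptional list that exactly the type (a) linear Schubert varieties are exceptional and the type (b) ones are not. This should amount to matching the explicit list in \cite[Theorem 1.2]{HM13} of non-rigid linear Schubert subspaces for the short root $\alpha_k$ with type (a) subspaces.
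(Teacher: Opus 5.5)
The $k$-plane notation below (flags $E_{k-1}\subset E_k$, subspaces $F,W,A,U$) follows your families (a) and (b). Throughout, $N=n-k+1\ge 2$.

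Your route is the same as the paper's. You reduce to linear $X_0$ via Hong--Mok and Hong--Kwon, split the linear spaces of $X$ into your families (a) and (b), and get non-$G$ deformations from ${\rm Aut}_0(Y)\cong PGL(2N)$.

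The ``only if'' half is sound, but one justification is wrong. ``$Y$ is homogeneous, hence rigid'' is not a valid inference: the Schubert line $L=\{\alpha_k\}$ is homogeneous and exceptional. The correct reason is an orbit argument, which also handles type (b) with $m\ge 2$ without appealing to VMRT's:
\begin{itemize}
\item the type (a) $\mathbb{P}^{2N-1}$'s form one $G$-orbit, indexed by isotropic $(k-1)$-planes $F$;
\item the type (b) $\mathbb{P}^m$'s form one $G$-orbit, indexed by isotropic flags $A\subset U$;
\item for $m\ge 2$ the two families are disjoint closed sets, so a small deformation stays in its orbit.
\end{itemize}
Also, your closing remark that ``exactly the type (a) ones are exceptional'' contradicts your own observation that $Y$ is rigid.

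The genuine gap is the step you defer in the ``if'' direction. You need the Schubert $\mathbb{P}^m=\mathbb{P}(F_{m+1})\subset\mathbb{P}(E_{k-1}^\perp/E_{k-1})$ to be more degenerate than a general $\mathbb{P}^m$ there. Your invariant gives rank $0$ for isotropic $F_{m+1}$ and rank $2(m+1)-2N$ for coisotropic $F_{m+1}$. The generic rank is $2\lfloor (m+1)/2\rfloor$, and these coincide when $m+1=2N-1$. Indeed, every hyperplane of a $2N$-dimensional symplectic space is $\ell^\perp$ for a line $\ell$, and $Sp(2N)$ acts transitively on them.

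In fact the Schubert hyperplane $X_0=\{V: E_{k-1}\subset V\subset E_k^\perp\}$ of $Y$ is not exceptional at all. The type (a) hyperplanes of translates of $Y$ are exactly the spaces $\{V: F\subset V\subset E^\perp\}$ for isotropic flags $F\subset E$ of dimensions $k-1$ and $k$. These form a single $G$-orbit, and since $2N-2\ge 2$ no small deformation can leave it. Already for $k=1$, $n=2$, every plane in $\mathbb{P}^3$ is an $Sp(4)$-translate of the Schubert plane.

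So the ``if'' direction is false for hyperplanes of $g\cdot Y$, just as it is for points, which you did catch. What your rank count actually proves is: $(X_0,X)$ is exceptional if and only if $X_0$ is a linear subspace of some $g\cdot Y$ with $1\le\dim X_0\le\dim Y-2$.

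The paper's proof has the same defect. ``A deformation by an element of $A_{2n-2k+1}$ not lying in $C_n$'' need not be a deformation that fails to be a $C_n$-translate. Only the ``only if'' half is used later, in Lemma~\ref{exceptional_triple}, so nothing downstream is affected.
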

\begin{proof}
    When $k=1$, it follows from Lemma \ref{lem-aut-larger}. When $k\geq 2$, exceptional pairs appear only if $X_0$ is linear (cf. Theorem \ref{HM_homological}, Lemma \ref{lem-aut-larger}), and it must be contained in a maximal linear subspace. There are two types of maximal linear subspaces in $X$ (cf. \cite[Remark 5.7]{LM03}). According to \cite[Theorem 1.1]{HM13}, $(X_0, X)$ is an exceptional pair only if $X_0\subsetneq g\cdot Y$ for some $g\in G$. Conversely, if $X_0\subsetneq g\cdot Y$ for some $g\in G$, since $\mathrm{Aut}_0(Y)=A_{2n-2k+1}$ and one can always find a deformation from a group element $g\in A_{2n-2k+1}$ that does not lie in $C_n$, $(X_0, X)$ is an exceptional pair.
\end{proof}

\begin{lemma}\label{exceptional_triple}
    Let $X$ be a rational homogeneous space and $X_0$ be a homogeneous subspace associated with a subdiagram. Suppose that $X_1\subset X_0 \subsetneq X$ is a smooth Schubert variety in $X$ (and hence also a smooth Schubert variety in $X_0$). Suppose further that $X_1,X_0$ are of Picard number one and $(X_1,X_0)$ is an exceptional pair. Then we have 
    \begin{enumerate}
        \item $(X_1,X)$ is also an exceptional pair when $X$ is of Picard number one;
        \item $X_1$ is not homologically rigid in $X$ when $X$ is of Picard number at least two.
    \end{enumerate}
\end{lemma}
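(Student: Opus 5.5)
Since $(X_1,X_0)$ is an exceptional pair, I would first fix a one-parameter local deformation $\{X_1^t\}_{t\in\Delta}$ of $X_1=X_1^0$ inside $X_0$ that is not induced by the $G_0$-action, where $G_0=G_L$ is the semisimple group acting on $X_0=G_L/(P_{I^c}\cap G_L)$. Composing with the inclusion $X_0\subset X$ gives a deformation of $X_1$ in $X$, and each $X_1^t$ is homologous to $X_1$ in $X$. The argument then reduces to a single claim. If $X_1^t=g_t\cdot X_1$ for some $g_t\in G$, or for some $g_t\in{\rm Aut}_0(X)$ in case (2), then $X_1^t$ would already be a $G_0$-translate (respectively an ${\rm Aut}_0(X_0)$-translate) of $X_1$ inside $X_0$. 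This would contradict the choice of the deformation.

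For that claim I would use the stabilizer computation in Lemma \ref{stab.subdiagram}. Suppose $g_t\cdot X_1\subset X_0$ with $X_1\subset X_0$. Apply the Tits fibration \eqref{diagram-Tits}: translates of $X_0$ are the fibers $\pi_1(\pi_2^{-1}(x))$. Since $X_1$ is not contained in two distinct translates of $X_0$ (to be checked), $g_t X_0=X_0$, so $g_t$ lies in the stabilizer $P_{(N(L)\cup I\backslash L)^c}$. That stabilizer acts on $X_0$ through its Levi factor, whose image in ${\rm Aut}(X_0)$ is $G_L$ modulo the parabolic. So $g_t\cdot X_1=h_t\cdot X_1$ for some $h_t\in G_L$.

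To check the uniqueness claim, I would use $X_1\ni o$ and the identity $(d_e\pi)^{-1}(T_oX_0)=\fg_L+\fp_{I^c}$. If the two subvarieties are distinct translates, then $X_1\subset X_0\cap gX_0$, which is again a Schubert-type intersection. I would take $X_1$ large enough that it spans $L$ in the Lie-algebra sense. If instead $X_1$ is contained in a smaller subdiagram variety, I would replace $X_0$ by that one, since exceptional pairs among subdiagram Schubert varieties persist in this reduction by Proposition \ref{prop-exceptional-pair}.

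Case (1) then follows because ${\rm Aut}_0$ and $G$ agree up to the cases in Lemma \ref{lem-aut-larger}, and exceptional pairs are defined relative to $G$. Case (2) needs more. For $X$ of Picard number at least two, ${\rm Aut}_0(X)=G$ (I would cite the known classification, with the $C_n/B_n$ type exceptions excluded at higher Picard number), so a translate under ${\rm Aut}_0(X)$ is a $G$-translate. I would also have to upgrade "local deformation not induced by $G$" to "homologous subvariety not a translate". A small deformation is homologous, and by the previous step it is not a translate under ${\rm Aut}_0(X)$, which already gives non-rigidity. The main obstacle is the uniqueness-of-containing-translate step. Where it fails, I would fall back on comparing via the Mori contraction $\pi_A$ with $A=I\backslash L$: $X_0$ lies in a fiber of $\pi_A$, so $g_t$ must preserve that fiber, and then the fiber-stabilizer argument gives the claim.
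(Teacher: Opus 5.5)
\textbf{There is a genuine gap.} Your overall plan matches the paper in spirit: push a non-$G_0$ deformation of $X_1$ inside $X_0$ into $X$ and show it is not a $G$-translate. The trouble is the step that carries all the weight, namely that $g\in G$ and $gX_1\subset X_0$ force $gX_1\in G_L\cdot X_1$. You derive this from ``$X_1$ lies in a unique translate of $X_0$'', and that uniqueness fails in the $B$-type half of the lemma.

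\emph{Counterexample to uniqueness.} Take $X=(B_m,\{\alpha_m\})=OG(m,2m+1)$ with $m>n\ge 2$. Let $X_0=\{W\supset E_{m-n}\}\cong OG(n,2n+1)$ (subdiagram $\{\alpha_{m-n+1},\dots,\alpha_m\}$) and $X_1=\{W\supset E_{m-n+1}\}$ (subdiagram $\{\alpha_{m-n+2},\dots,\alpha_m\}$).
\begin{itemize}
\item $(X_1,X_0)$ is exceptional: via $OG(n,2n+1)\cong OG_+(n+1,2n+2)$ you can move the isotropic line $E_{m-n+1}/E_{m-n}$ off the hyperplane $\mathbb{C}^{2n+1}\subset\mathbb{C}^{2n+2}$.
\item Yet $X_1\subset\{W\supset E\}$ for every hyperplane $E\subset E_{m-n+1}$. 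This is a $\mathbb{P}^{m-n}$-family of $G$-translates of $X_0$, and any two of them meet exactly in $X_1$.
\item The same happens for every subdiagram-type $X_1=\{W\supset E_{m-n+j-1}\}$ with $j\ge 2$.
\end{itemize}

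\emph{Your fallbacks do not repair this.}
\begin{itemize}
\item You cannot ``take $X_1$ large enough'', since $X_1$ is given.
\item Shrinking $X_0$ descends here to $X_0'=X_1$, and $(X_1,X_1)$ is not exceptional. Proposition \ref{prop-exceptional-pair} only lists necessary conditions; it says nothing about exceptionality persisting when $X_0$ is shrunk.
\item The Mori fallback is misstated: $X_0$ lies in a fiber of $\pi_{I\cap L}$, not of $\pi_{I\backslash L}$. In case (1) $X$ has Picard number one, so any such contraction is either the identity or constant and yields nothing.
\item With $A=I\cap L=\{\alpha\}$, your fiber argument does correctly reduce (2) to (1): $g$ must preserve the fiber $F$ of $\pi_\alpha$ and acts on it through $G_{C_\alpha}$. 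But (1) is exactly where the gap sits.
\end{itemize}

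\emph{What is needed instead.} Replace uniqueness by: the translates of $X_0$ containing $X_1$ form a single orbit of $\mathrm{Stab}(X_1)$. Then $gX_1\subset X_0$ gives $g\in\mathrm{Stab}(X_0)\,\mathrm{Stab}(X_1)$, so $gX_1$ is a translate by an element of $\mathrm{Stab}(X_0)$, which acts on $X_0$ through $G_L$. In the example this holds, since hyperplanes of $E_{m-n+1}$ form one orbit of its stabilizer, but you give no general argument for it.

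\emph{How the paper avoids the issue.} It uses the classification: $X_0$ must be $(B_n,\{\alpha_n\})$ or $(C_n,\{\alpha_k\})$, because $F_4$ and $G_2$ cannot sit properly in a larger connected diagram. In the $C$ case $X_1\subsetneq Y$ with $Y$ the maximal linear subspace of Lemma \ref{C_n_exceptional}. The non-$G_0$ deformation is then written explicitly, coming from $D_{n+1}$ resp. $\mathrm{Aut}_0(Y)=A_{2n-2k+1}$, and the paper asserts, without further argument, that it is not induced by $B_m$, $C_m$ or $F_4$.

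In the $C_m$ setting your shrinking step can be made to work by shrinking to that $Y$, where uniqueness does hold once $\dim X_1>0$. The justification there is Lemma \ref{C_n_exceptional}, not Proposition \ref{prop-exceptional-pair}. In the $B$-type cases no such intermediate $Y$ exists.
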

\begin{proof}
$(X_1,X_0)$ is an exceptional pair only if $X_0$ is of types $B_n, C_n,F_4,G_2$. Since the Dynkin diagrams of $F_4$ and $G_2$ could not be extended, it remains to consider the cases when $X_0$ is of types $B_n$ or $C_n$. When $X_0$ is of type $B_n$, it must be $(B_n,\{\alpha_n\})$. Since $(B_n,\{\alpha_n\})\cong (D_{n+1}, \{\alpha_n\})$ and one can always find a deformation from a group element $g\in D_{n+1}$ that does not lie in $B_n$, then the conclusion follows, whether $B_n$ is extended to $B_m$ with $m>n$, or $F_4$ (in $n=3$ case). When $X_0$ is of type $C_n$, it must be $(C_n,\{\alpha_k\}) (1\leq k\leq n-1)$. From Lemma \ref{C_n_exceptional}, $X_1\subsetneq Y\subset X_0$ for a maximal linear subspace $Y\subset X_0$. Since $\mathrm{Aut}_0(Y)=A_{2n-2k+1}$ and one can always find a deformation from a group element $g\in A_{2n-2k+1}$ that does not lie in $C_n$, then the conclusion follows, whether $C_n$ is extended to $C_m$ with $m>n$, or $F_4$ (in $n=3$ case).
\end{proof}

\begin{lemma}\label{induced_base_deform}  
Let $J\subset K \subset R$ and $\pi: G/P_J \rightarrow G/P_K$ be the canonical projection map. Let $X^{J}(w) \subset G/P_J$ be a smooth Schubert variety. Then the image $\pi(X^J(w))$ is a Schubert veriety in $G/P_K$, written as $X^K(v)$. Suppose the restricted map $\phi=\pi|_{X^J(w)}: X^J(w)\rightarrow X^K(v)$ is smooth. Let $\sM_t\subset G/P_J$, $t\in\Delta$, be a holomorphic family such that $\sM_0=X^J(w)$. Shrinking $\Delta$ around the origin if necessary, we get a holomorphic family $\sN_t:=\pi(\sM_t)\subset G/P_K$, $t\in\Delta$, such that $\sM_t\rightarrow\sN_t$ is a holomorphic map for each $t$.
\end{lemma}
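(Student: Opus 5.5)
The plan is to use the rigidity of the fibers of $\phi$, together with the standard fact that small deformations of a fiber of a proper morphism stay inside fibers. First, $\pi(X^J(w))$ is $B$-stable, closed and irreducible, so it equals $\overline{B v P_K/P_K}$ for the unique $v\in W^K$ with $\pi(BwP_J/P_J)=BvP_K/P_K$. Hence the image is a Schubert variety $X^K(v)$. Next I would construct $\sN_t$ as the image, under $\pi$, of the total space of the family. Let $\mathcal{M}\subset \Delta\times G/P_J$ be the total space of $\{\sM_t\}$; it is flat and proper over $\Delta$. The map $\Pi=\mathrm{id}_\Delta\times\pi$ is proper, so $\mathcal{N}:=\Pi(\mathcal{M})\subset\Delta\times G/P_K$ is a closed analytic subvariety by Remmert's proper mapping theorem. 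Set $\sN_t:=\mathcal{N}\cap(\{t\}\times G/P_K)=\pi(\sM_t)$, with $\sM_t\to\sN_t$ the restriction of $\pi$. The substantive work is then to show that, after shrinking $\Delta$, the $\sN_t$ form a holomorphic (flat) family. Equivalently, each $\sM_t$ must be a union of fibers of $\pi$ that are deformations of the fibers of $\phi$.

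This rests on the fiber-rigidity step. The fibers of $\pi$ are copies of $F=P_K/P_J$, and the fibers of $\phi$ are smooth subvarieties $\phi^{-1}(y)=X^J(w)\cap\pi^{-1}(y)$. Consider the relative Hilbert scheme (or Douady space) of subvarieties of $G/P_J$ contracted by $\pi$, that is, subvarieties lying in some fiber of $\pi$. Because $\phi$ is smooth, $N_{\phi^{-1}(y)/X^J(w)}$ is trivial. It follows that the normal bundle of a fiber $\phi^{-1}(y)$ in $\sM_0$ is trivial of rank $\dim X^K(v)$. Consequently the fibers of $\phi$ deform in a smooth family of dimension $\dim X^K(v)$, parametrised near $X^K(v)$, and they remain contracted by $\pi$. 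Contractedness is a closed and open condition under small deformation: a curve class contracted by $\pi$ stays contracted, since it is determined by numerical classes, which are locally constant.

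I then expect to argue, via a Kollár-type rigidity lemma, that each $\sM_t$ for small $t$ is covered by such deformed fibers. Let $\sM_0\cdots$ be covered by the connected family of fibers of $\phi$, and deform this covering family along with $\sM_t$. This is unobstructed because $H^1$ of the trivial normal bundle of $\phi^{-1}(y)$ inside $\sM_0$ vanishes, using $H^1(\mathcal O)=0$ on the rational smooth fiber $X^J(u)$. The deformed members are numerically equivalent to contracted cycles, so they lie in $\pi$-fibers. Hence $\pi|_{\sM_t}$ has equidimensional fibers of the same dimension as $\phi$, and $\dim\sN_t=\dim X^K(v)$. Flatness of $\mathcal{N}\to\Delta$ near $0$ then follows because $\mathcal{N}$ is reduced and irreducible, dominates $\Delta$, and has fibers of constant dimension over a smooth curve. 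Shrinking $\Delta$ ensures that all of this holds uniformly.

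The main obstacle is this middle step. One must show that deformations of $\sM_0$ remain unions of deformations of the $\phi$-fibers, so that $\pi(\sM_t)$ has the expected dimension rather than jumping. I would first try the numerical argument above, using the fact that the relative cone of $\pi$ is a face of the nef cone of $G/P_J$. If that is too weak, I would fall back on upper semicontinuity of fiber dimension of $\pi|_{\mathcal M}$ combined with smoothness of $\phi$ at $t=0$.
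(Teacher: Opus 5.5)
Your ingredients are the paper's own: the fibres $Z_x$ of $\phi$ have $N_{Z_x/X^J(w)}\cong\sO^m$, their deformations in $\sM$ are unobstructed and sweep out $\sM$, and the deformed members stay contracted by $\pi$. That last point needs $Z_x$ connected, which you never say but which holds since $Z_x\cong X^J(u)$.

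The gap is in what you conclude from these ingredients. The step you call the main obstacle is automatic. After shrinking $\Delta$, $\sM$ is a connected manifold, so $\sN=\Pi(\sM)$ is irreducible. Every fibre of $\sN\to\Delta$ is then a hypersurface in $\sN$, so it has pure dimension $\dim\sN-1=\dim X^K(v)$, since the central fibre is $X^K(v)$ set-theoretically. The reduced space $\sN$ is also flat over the disc for free.

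None of this needs deformation theory, and none of it gives what the lemma must deliver. You need the member of the family at $t=0$ to be $X^K(v)$ itself: reduced, of multiplicity one, without embedded points. The proof of Proposition \ref{prop homology rigidity reduction} also uses that $\Phi:\sM\to\sN$ is smooth, with fibres the deformed $Z_x$.

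The properties you actually feed into your last step can all hold while this fails. Take the graphs $\sM_t\subset\mathbb{P}^N\times\mathbb{P}^2$ of maps $f_t$ from a fixed elliptic curve, where $f_0$ is an \'etale double cover of a smooth plane cubic $E_0$ and $f_t$ is birational onto a nodal sextic for $t\neq 0$. Here $\pi|_{\sM_0}$ is smooth, $\sN$ is reduced, irreducible and flat, and $\pi|_{\sM_t}$ is equidimensional. Yet the central fibre of $\sN$ is the double cubic $2E_0\neq\pi(\sM_0)$.

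The missing step is where connectedness of $Z_x$ must be used a second time, and it is exactly the content of the paper's proof. Let $F$ be the family of deformations of $Z_x$ in $\sM$, with universal family $\varrho:U\to F$ and evaluation map $\mu:U\to\sM$.
\begin{itemize}
\item Since $N_{Z_x/\sM}\cong\sO^{m+1}$ and $H^1(\sO_{Z_x})=0$, $F$ is smooth over $\Delta$ with $F_0\cong X^K(v)$.
\item The members lying in $\sM_0$ are exactly the $\phi$-fibres. So $\mu$ restricts over $t=0$ to an isomorphism $U_0\cong X^J(w)$, hence is an isomorphism near $t=0$.
\item Therefore $\sM\cong U\to F$ is a smooth fibration, and $\Phi$ factors through it via a finite surjection $f:F\to\sN$.
\item Over $t=0$, $f$ is the isomorphism $F_0\to X^K(v)$. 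So $\sO_{\sN}\to f_*\sO_F$ is surjective modulo $t$, hence surjective near $t=0$ by Nakayama. It is injective because $\sN$ is reduced.
\end{itemize}
Thus $f$ is an isomorphism near $t=0$. This identifies $\Phi:\sM\to\sN$ with $\varrho:U\to F$, which gives $\sN_0=X^K(v)$, $\sN$ smooth over $\Delta$, and $\Phi$ smooth. Without this identification your argument only produces the flat family of images, and you have not controlled its central fibre.
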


\begin{proof}
We can regard $\sM$ as a closed submanifold of $G/P_J\times\Delta$ in a natural way such that under the identification of $G/P_J\times\{t\}$ with $G/P_J$ we have $\sM\cap(G/P_J\times\{t\})=\sM_t$. In particular, $\sM_0=X^J(w)\times\{0\}$. Let $\Phi$ be the composition map $\sM\subset G/P_J\times\Delta\xrightarrow{\pi\times{\rm id}} G/P_K\times\Delta$. Since $\sM$ is projective over $\Delta$, its image via $\Phi$ is also projective over $\Delta$, denoted by $\sN\subset G/P_K\times\Delta$. 

Let $Z_x$ be the fiber of $\phi$ at an arbitrary point $x\in X^K(v)$. Then the normal bundles $N_{Z_x/X^J(w)}\cong\sO_{Z_x}^m$ and $N_{Z_x/\sM}\cong\sO_{Z_x}^{m+1}$, where $m:=\dim X^K(v)$ and $\sM_0=X^J(w)\times\{0\}$ is identified with $X^J(w)$. Hence the deformations of $Z_x$ cover $\sM$. Since $Z_x$ is smooth by assumption, there exists an open neighborhood $W_x$ of $Z_x\subset\sM$ such that through any point $y\in W_x$ there exists a smooth deformation $Z(y)$ of $Z_x$ such that $y\in Z(y)\subset\sM$. As $Z_x$ is sent by $\Phi$ to a single point, so is $Z(y)$. In other words, $Z(y)$ is contained in the fiber $\Phi^{-1}\Phi(y)$. By semicontinuity of the dimension function on fibers, we have $\dim Z(y)=\dim Z_x=\dim\Phi^{-1}\Phi(y)$, and $Z(y)$ is indeed an irreucible component of the fiber $\Phi^{-1}\Phi(y)$. Conversely, each irreducible component of $\Phi^{-1}\Phi(y)$ is a smooth deformation $Z(y')$ of $Z_x\subset\sM$ for a general point $y'$ in this component. 

Let $F$ be the family of smooth deformations of $Z_x$ lying in $\sM$. Denote by $\varrho: U\rightarrow F$ be the universal map, and by $\mu: U\rightarrow\sM$ be the evaluation map. Then $\mu': U'\rightarrow W_x$ is a birational map, where $U':=\mu^{-1}(W_x)\subset U$ and $\mu'$ is the restriction of $\mu$ to $U'$. Let $E\subset W_x$ be the branch locus of $\mu'$. If $E$ is nonempty, then it has codimension at least one in $W_x$. Since $\phi: X^J(w)\rightarrow X^K(v)$ is a smooth map, we have $E\cap\sM_0=\emptyset$. Then there is an open neighborhood $\Delta'$ of $0\in\Delta$ such that for each $t\in\Delta$ we have $\mu^{-1}(\sM_t)\subset U'\setminus \mu^{-1}(E)$. Shrink $\Delta$ to $\Delta'$, we may assume that $U=U'\cong\sM$ and thus $\varrho: U\rightarrow F$ coincides with $\sM\rightarrow\sN$. The conclusion follows.
\end{proof}

\begin{lemma}\label{pic_num_one}
    Let $X^J(w) \subset X=G/P_J$ be a smooth Schubert variety of Picard number one. Let $X_1=P_K/P_J \subset G/P_J$ be a rational homogeneous space of Picard number one containing $X^J(w)$ as a Schubert variety. Then $X^{J}(w)$ is homologically rigid in $X$ if  $(X^J(w), X_1)$ is not an exceptional pair in the sense of Definition \ref{exceptional_pair}.
\end{lemma}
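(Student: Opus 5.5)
By Proposition \ref{reduced_to_local_deform}, it suffices to show that every local deformation of $X_0:=X^J(w)$ in $X=G/P_J$ is induced by the action of $G$. Let $\sM_t\subset X$, $t\in\Delta$, be a holomorphic family with $\sM_0=X_0$. The plan is to push this family down to the Picard number one space $G/P_K$, where $X_1=P_K/P_J$ is a fiber. We then show that each $\sM_t$ lies in a $G$-translate of $X_1$, and conclude using the hypothesis that $(X_0,X_1)$ is not an exceptional pair.

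\textbf{Step 1: push the family to the base.} Apply Lemma \ref{induced_base_deform} to the projection $\pi:G/P_J\to G/P_K$. Here $\pi(X_0)$ is the single point $o_K=P_K/P_K$, so the restricted map $\phi:X_0\to\{o_K\}$ is trivially smooth. Hence, after shrinking $\Delta$, the images $\sN_t=\pi(\sM_t)$ form a holomorphic family of subvarieties of $G/P_K$ with $\sN_0=\{o_K\}$.

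\textbf{Step 2: the family stays in fibers of $\pi$.} I would re-run the argument inside the proof of Lemma \ref{induced_base_deform} with $Z_x=X_0$ itself. In that argument, irreducible components of fibers of $\Phi:\sM\to G/P_K\times\Delta$ are deformations of $Z_x$ of the same dimension. Since $Z_x=\sM_0$ has the full dimension of $\sM_t$, every $\sM_t$ is contained in a single fiber of $\pi$. Concretely, upper semicontinuity of fiber dimension together with $\dim\sM_t=\dim X_0$ forces $\sN_t$ to be a point $x_t\in G/P_K$. The points $x_t$ depend holomorphically on $t$. Since $G\to G/P_K$ admits local holomorphic sections, we can choose $g_t\in G$ holomorphic in $t$ with $g_0=e$ and $g_t\cdot o_K=x_t$. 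Then $g_t^{-1}\sM_t\subset \pi^{-1}(o_K)=X_1$.

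\textbf{Step 3: apply the non-exceptional hypothesis.} The family $g_t^{-1}\sM_t$ is a local deformation of $X_0$ inside $X_1$. By Definition \ref{exceptional_pair} and our assumption, it is induced by the action of the group acting on $X_1$, namely the Levi part $G_K\subset P_K\subset G$. So there are $h_t\in G_K$ with $g_t^{-1}\sM_t=h_t\cdot X_0$, hence $\sM_t=(g_th_t)\cdot X_0$ with $g_th_t\in G$. This proves that all local deformations of $X_0$ come from $G$, and Proposition \ref{reduced_to_local_deform} then gives homological rigidity.

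The expected obstacle is Step 2: rigorously justifying that a point deforms only to points, so that $\sN_t$ is a point rather than a positive-dimensional image. I would handle it as follows. The class $\pi_*[\sM_t]=\pi_*[X_0]=0$ in homology, since $\dim \pi(X_0)<\dim X_0$. If $\sN_t$ had positive dimension, pulling back an ample divisor from $G/P_K$ would give a nonzero intersection number with $\sM_t$ against a suitable complementary class. That intersection number vanishes for $X_0$, which contradicts the constancy of intersection numbers in a flat family. A secondary check is that "exceptional pair" is phrased with respect to the group $G_K$ acting on $X_1$; this matches the convention of Remark \ref{covering_pair}, which is exactly why that definition was chosen.
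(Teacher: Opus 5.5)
Your proof is correct, and its core matches the paper's. A subvariety numerically equivalent to $X^J(w)$ has $\pi^*D\cdot Z=0$ for ample $D$ on $G/P_K$, so it lies in one fiber of $\pi$, hence in a $G$-translate of $X_1$. The non-exceptional hypothesis on $(X^J(w),X_1)$ then finishes.

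The difference is in where the local-to-global passage happens.
\begin{itemize}
\item The paper applies the numerical argument directly to an arbitrary $Z$ homologous to $X^J(w)$. It never needs Lemma \ref{induced_base_deform} or a holomorphic choice of translations. The price is that its last sentence tacitly uses two facts: that $H_*(X_1)\to H_*(X)$ is injective, so that $g\cdot Z$ is homologous to $X^J(w)$ inside $X_1$; and Proposition \ref{reduced_to_local_deform} for the pair $(X^J(w),X_1)$ with the group $G_K$.
\item You invoke Proposition \ref{reduced_to_local_deform} once, in $X$, and then use Definition \ref{exceptional_pair} literally on the family $g_t^{-1}\sM_t\subset X_1$. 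This costs you two things, both of which you supply: $\sN_t$ must be a point $x_t$ depending holomorphically on $t$, and you need a holomorphic lift $g_t$.
\end{itemize}

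Two remarks on your Step 2.
\begin{itemize}
\item Your fallback argument (vanishing of $\pi_*[\sM_t]$, tested against pullbacks of ample classes) is exactly the paper's argument. It already shows each $\sM_t$ lies in a fiber, so Step 1 is not really needed.
\item Your first justification is misphrased. Upper semicontinuity of the fiber dimension of $\Phi$ on $\sM$ bounds nearby fiber dimensions from above, not below. The clean version uses upper semicontinuity for the proper map $\sN\to\Delta$, whose fiber over $0$ is a point; equivalently, the rigidity lemma.
\end{itemize}
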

\begin{proof}
   Suppose $Z\subset G/P_J$ is homologically equivalent to $X^J(w)$. Then they are numerically equivalent. Let $\pi: G/P_J\rightarrow G/P_K$ be the natural projection map. Take any point $x\in\pi(Z)\subset G/P_K$. Let $\mathcal{D}$ be the set of very ample divisors on $G/P_K$ containing the point $x$. Take $D\in\mathcal{D}$ general. Then $\phi^*\mathcal{O}(D)=\mathcal{O}(\pi^{-1}(D))$ is a base point free divisor on $G/P_J$ satisfying that $Z\cap\pi^{-1}(D)\neq\emptyset$. On the other hand, the numerical class of intersection cycles $\pi^*D\cdot Z = \pi^*D\cdot X^J(w)=0$, where the last equality follows from the fact $X^J(w)$ is contained in a single fiber. Combining with the fact $Z\cap\pi^{-1}(D)\neq\emptyset$, we know that $Z\subset\pi^{-1}(D)$. In summary, we have $Z\subset\cap_{D\in\mathcal{D}_g} \pi^{-1}(D)=\pi^{-1}(x)$, where $\mathcal{D}_g$ is the set of general elements in $\mathcal{D}$. Take $g\in G$ such that $g\cdot x$ is the base point $o\in G/P_K$. Then $g\cdot Z\subset X_1$ and the homological rigidity of $(X^{J}(w), X)$ follows from that $(X^J(w), X_1)$ is not an exceptional pair.
\end{proof}

\begin{lemma}\label{pic_one_base}
     Let $X^J(w) \subset X=G/P_J$ be a smooth Schubert variety of Picard number one. Let $\alpha\notin J$ be a simple root and  $\pi:X\rightarrow Y=G/P_{K} $ be the natural projection map for some $K\subset R\backslash \{\alpha\}$. Suppose that $X^J(w)$ is mapped isomorphically into a Schubert variety $Y_0\subset Y$ through $\pi$ and 
     it satisfies 
     \begin{enumerate}
         \item $(Y_0,Y)$ is not an exceptional pair in the sense of Definition \ref{exceptional_pair} if $Y$ is of Picard number one (i.e., $K=R\backslash \{\alpha\}$);
         \item $Y_0$ is homologically rigid in $Y$ if $Y$ is of Picard number at least two,
     \end{enumerate} then $X^{J}(w)$ is homologically rigid in $X$.
\end{lemma}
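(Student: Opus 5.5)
Let $Z\subset X$ be a subvariety with $[Z]=[X^J(w)]$. I will show that $\pi|_Z$ is an isomorphism onto a cycle homologous to $Y_0$, use the hypothesis on $(Y_0,Y)$ to move that image onto $Y_0$ by an element of $G$, and then recover $Z$ inside the preimage of $Y_0$.

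\emph{Step 1: the pushforward of $Z$.} Since $\pi|_{X^J(w)}$ is an isomorphism onto $Y_0$, we have $\pi_*[X^J(w)]=[Y_0]$ with degree one. Homology is preserved by pushforward, so $\pi_*[Z]=[Y_0]$. An effective cycle homologous to a nonzero class cannot be zero. Hence $\pi|_Z$ is generically finite onto its image $\pi(Z)$, and we get $\deg(\pi|_Z)\,[\pi(Z)]=[Y_0]$.

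\emph{Step 2: identify $\pi(Z)$.} The class $[Y_0]$ is a Schubert class, and $[\pi(Z)]$ is effective, so it is a nonnegative combination of Schubert classes. Hence $\deg(\pi|_Z)=1$ and $[\pi(Z)]=[Y_0]$. If $Y$ has Picard number at least two, hypothesis (2) gives $\pi(Z)=g\cdot Y_0$ for some $g\in G$. If $Y$ has Picard number one, I would combine Proposition~\ref{reduced_to_local_deform} with hypothesis (1): since $(Y_0,Y)$ is not exceptional, every local deformation of $Y_0$ comes from $G$, so $Y_0$ is homologically rigid under $G$ and again $\pi(Z)=g\cdot Y_0$. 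Replacing $Z$ by $g^{-1}Z$, we may assume $\pi(Z)=Y_0$ and that $Z\to Y_0$ is birational.

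\emph{Step 3: recover $Z$.} Now $Z$ lies in $\pi^{-1}(Y_0)$, which is a Schubert variety, a fiber bundle over $Y_0$ with fiber $P_K/P_J$, and $Z$ is a rational section over $Y_0$. I need $Z=h\cdot X^J(w)$ with $h\in{\rm Stab}(Y_0)$. I would first try a homology argument. The section $X^J(w)$ has a distinguished class in $\pi^{-1}(Y_0)$. The homology of $X$ in the relevant degree distinguishes curve classes in the fiber direction through the numerical classes $D_\beta\cdot Z$ with $\beta\in K^c\cap J^c$. Matching these intersection numbers with those of $X^J(w)$ should force $Z$ to be the "minimal" section, namely the one meeting the fiber-direction divisors like $X^J(w)$. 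Here I would use that $X^J(w)$ has Picard number one, so $Y_0$ is covered by lines whose lifts to $X$ have minimal degree with respect to the $D_\beta$. Next, I would use the $B$-action: degenerate $Z$ by a one-parameter subgroup to a $B$-stable cycle of the same class, which must be $X^J(w)$ by Step 1. Then argue via Proposition~\ref{Schubert closed} and smoothness of the Hilbert scheme at $X^J(w)$ that $Z$ lies in the $G$-orbit of $X^J(w)$. This amounts to showing that every local deformation of $X^J(w)$ projects to a deformation of $Y_0$, which by Lemma~\ref{induced_base_deform} with fiber a point is given by the $G$-action, and which then lifts uniquely because sections close to $X^J(w)$ over $Y_0$ are determined by the fiber data.

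\emph{Main obstacle.} Step 3 is the hard part. Rational sections of $\pi^{-1}(Y_0)\to Y_0$ with the same class need not be $G$-translates in general. The key input will be that $Y_0\cong X^J(w)$ has Picard number one and $\alpha\notin J$ is the single marked direction, so the degree constraints pin the section down. If this fails, I would fall back to the local-deformation route via Proposition~\ref{reduced_to_local_deform} applied to the smooth variety $X^J(w)$.
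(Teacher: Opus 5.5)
\paragraph{Gap in Step~3.} Your Steps~1--2 are correct, but Step~3 is not a proof, and it is where all the content of the lemma sits. The routes you sketch do not close it.

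The degeneration argument only shows that $[X^J(w)]$ lies in the closure of the orbit of $[Z]$. To conclude $[Z]\in G\cdot[X^J(w)]$ you would need the Chow component through $[X^J(w)]$ to be the $G$-orbit, i.e.\ that all local deformations of $X^J(w)$ come from $G$. That is exactly what is to be proved.

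Your proposed way of proving it, that deformations with image $Y_0$ ``lift uniquely'', is false. Take $G=SL_4$, $J=\{\alpha_2\}$, $K=\{\alpha_2,\alpha_3\}$, $\alpha=\alpha_1$, so $\pi\colon Fl(1,3;\mathbb{C}^4)\to\mathbb{P}^3$, $(p,H)\mapsto p$. The Schubert curve $\{(p,H_0):p\in\ell\}$ maps isomorphically onto the line $\ell$, and so does $\{(p,H):p\in\ell\}$ for every plane $H\supset\ell$.

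Finally, a degree count on sections of $\pi^{-1}(Y_0)\to Y_0$ is aimed at the wrong map. The divisors $D_\beta$, $\beta\in K^c$, are pulled back from $Y$ and carry no information beyond $\pi_*[Z]$.

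\paragraph{The missing idea.} Work with $\pi_\alpha\colon G/P_J\to G/P_{J\cup\{\alpha\}}$ instead of $\pi$.

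Since $X^J(w)$ has Picard number one, it lies in the central fibre $X_1=P_{J\cup\{\alpha\}}/P_J$ of $\pi_\alpha$, where $\alpha$ is the marked root in the support of $w$. This root is not in $K$, since $\pi|_{X^J(w)}$ is not constant.

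Take $x\in\pi_\alpha(Z)$ and a general very ample $D$ through $x$. The base-point-free divisor $\pi_\alpha^*D$ meets $Z$, while $\pi_\alpha^*D\cdot Z=\pi_\alpha^*D\cdot X^J(w)=0$. Hence $Z\subset\pi_\alpha^{-1}(D)$, and intersecting over all such $D$ puts $Z$ in the single fibre $\pi_\alpha^{-1}(x)$. You may then translate $Z$ into $X_1$; this is Lemma~\ref{pic_num_one}.

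As $\alpha\notin K$, $\pi|_{X_1}$ is a $P_{J\cup\{\alpha\}}$-equivariant isomorphism onto the homogeneous subspace $F\subset Y$ associated with $\Gamma_{J\cup\{\alpha\}}(\alpha)$. So what you actually need is rigidity of $Y_0$ inside $F$, not inside $Y$.

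Hypotheses (1)/(2) give this only indirectly, through Lemma~\ref{exceptional_triple}. If $F=Y$, it is hypothesis (1) itself. If $F\subsetneq Y$ and $(Y_0,F)$ were exceptional, then $(Y_0,Y)$ would be exceptional, resp.\ $Y_0$ would fail to be homologically rigid in $Y$. Hence $(X^J(w),X_1)\cong(Y_0,F)$ is not exceptional, and Lemma~\ref{pic_num_one} finishes.

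Your Step~2 does not substitute for this. Even knowing $\pi(Z)=Y_0$ and $Z\subset\pi_\alpha^{-1}(x)$, you must move $\pi_\alpha^{-1}(x)$ to $X_1$ by an element fixing $Y_0$. That amounts to showing that two $G$-translates of $Y_0$ lying in $F$ differ by an element preserving $F$, which is again the rigidity of $Y_0$ in $F$.
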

\begin{proof}
    Let $\pi_\alpha: X=G/P_J \rightarrow G/P_{J\cup \{\alpha\}}$ be the natural projection map. Since $X^J(w)$ is mapped isomorphically into the Schubert variety $Y_0\subset Y$ through $\pi$, without loss of generality we may assume that $X^J(w)$ lies in the central fiber $\pi_\alpha^{-1}(o)=P_{J\cup \{\alpha\}}/P_J$, where $o\in G/P_{J\cup \{\alpha\}}$ denotes the base point. Moreover, $\pi_\alpha^{-1}(o)$ is mapped isomorphically into the homogeneous subspace associated with the subdiagram $\Gamma_{J\cup \{\alpha\}}(\alpha)$ in $Y=G/P_{K}$ through $\pi$. According to our condition on $(Y_0,Y)$, $(Y_0, \pi(\pi_\alpha^{-1}(o)))$ is not an exceptional pair, from Lemma \ref{exceptional_triple}. Again, since $\pi|_{\pi_\alpha^{-1}(o)}$ is an isomorphism, $X^{J}(w)$ is contained in the rational homogeneous  subspace $P_{J\cup \{\alpha\}}/P_J\subset G/P_J$ of Picard number one, then the rest can be done by Lemma \ref{pic_num_one}.
       \begin{eqnarray*}
\xymatrix{& G/P_{J}\ar[ld]_{\pi}\ar[rd]^{\pi_\alpha}  & \\
Y=G/P_{K}  & &G/P_{J\cup \{\alpha\}}  \\}
\end{eqnarray*}
\end{proof}
\subsection{The cases of higher Picard numbers}
The following result gives a criterion on the homological rigidity of Schubert varieties via the known cases of Picard number one.

\begin{proposition} \label{prop homology rigidity reduction}
Let $X^J(w) \subset X=G/P_J$ be a smooth Schubert variety in $X=G/P_J$. If there exists a tower of Billey-Postnikov decompositions corresponding to the iterated fiber bundles $X^{J}(w)=X_0\rightarrow X_1 \rightarrow X_2 \rightarrow \cdots \rightarrow X_m=pt$, such that  in each step the central fiber is a smooth Schubert variety in a rational homogeneous space of Picard number one and all pairs in the fibers of the iterated fiber bundle are not exceptional in the sense of Definition \ref{exceptional_pair} (all algebraic groups are chosen to be subgroups of $G$), then $X^J(w)$ has homological rigidity in $X$.
\end{proposition}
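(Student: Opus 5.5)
By Proposition \ref{reduced_to_local_deform}, it suffices to show that every local deformation of $X_0=X^J(w)$ in $X$ is induced by the action of $G$. The plan is an induction on the length $m$ of the tower $X_0\rightarrow X_1\rightarrow\cdots\rightarrow X_m=pt$.

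Write the first step as $\phi:X_0\rightarrow X_1=X^K(v)$, with $K=J\cup\{\beta\}$ as in Theorem \ref{smooth_BP}. This is a Zariski-locally trivial fibration whose fiber is $F=X^J(u)$, sitting inside the Picard number one space $P_K/P_J$. By hypothesis the pair $(F,P_K/P_J)$ is not exceptional. Moreover, the truncated tower $X_1\rightarrow\cdots\rightarrow X_m$ again satisfies the hypotheses of Proposition \ref{prop homology rigidity reduction} inside $G/P_K$. So the inductive statement I will prove is the local one: every local deformation of $X_1$ in $G/P_K$ is of the form $g_t\cdot X_1$ with $g_t\in G$ depending holomorphically on $t$. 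The base case $m=1$ is exactly the non-exceptionality of the single pair; here, by the convention on the groups, the relevant automorphisms come from subgroups of $G$.

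For the inductive step, take a family $\sM_t$ with $\sM_0=X_0$. Since $\phi$ is smooth, Lemma \ref{induced_base_deform} gives the family $\sN_t=\pi(\sM_t)$ in $G/P_K$, together with fibrations $\sM_t\rightarrow\sN_t$. By induction $\sN_t=g_t\cdot X_1$. Replacing $\sM_t$ by $g_t^{-1}\cdot\sM_t$, I may assume $\sN_t=X_1$ for all $t$. Then for every $y\in X_1$ the fiber $\sM_t\cap\pi^{-1}(y)$ is a deformation of $\phi^{-1}(y)$. This fiber is a translate $b\cdot F$ with $b\in P_{I'}$-type elements of $G$, inside $\pi^{-1}(y)\cong P_K/P_J$. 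By non-exceptionality, $\sM_t\cap\pi^{-1}(y)=h_t(y)\cdot\phi^{-1}(y)$ with $h_t(y)$ in the Levi-type subgroup $G_{K}$, conjugated to the fiber over $y$. Thus $\sM_t$ is determined by a holomorphic section $y\mapsto [h_t(y)]$ of the bundle over $X_1$ whose fiber at $y$ is $\mathrm{Stab}(\pi^{-1}(y))/\mathrm{Stab}(\phi^{-1}(y))$. Equivalently, it is a section over $X_1$ of the bundle of deformation spaces of the fibers, which is a homogeneous bundle restricted to the Schubert variety $X_1$.

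The main obstacle is showing that such a section is the restriction of a global translate, i.e.\ that $\sM_t=g'_t\cdot X_0$ for some $g'_t$ in the stabilizer of $X_1$. I would attack it infinitesimally. The relevant space is $H^0(X_0,N_{X_0/X})$, which sits in an exact sequence
\[0\rightarrow H^0(X_0,N_{X_0/\pi^{-1}(X_1)})\rightarrow H^0(X_0,N_{X_0/X})\rightarrow H^0(X_1,N_{X_1/(G/P_K)})\]
coming from $0\rightarrow N_{X_0/\pi^{-1}X_1}\rightarrow N_{X_0/X}\rightarrow\phi^*N_{X_1/(G/P_K)}\rightarrow 0$, using $\phi_*\sO=\sO$. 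The third term is spanned by $\mathfrak g$ by induction. For the first term, $\phi_*N_{X_0/\pi^{-1}X_1}$ is the homogeneous vector bundle whose fiber is $H^0(F,N_{F/(P_K/P_J)})$. By non-exceptionality this fiber equals the image of $\mathfrak g_K$ (the normal directions of $F$ coming from the Levi). So its global sections over $X_1$ are generated by the sections of the corresponding $G$-bundle, i.e.\ by $\mathfrak g$ acting on $X_0$, because $X_1$ is a Schubert variety and restriction of sections of homogeneous bundles from $G/P$ to Schubert varieties is surjective. Hence $\mathfrak g\rightarrow H^0(X_0,N_{X_0/X})$ is surjective, and $X_0$ is infinitesimally rigid modulo $G$. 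Combined with Proposition \ref{Schubert closed}, which makes the $G$-orbit of $[X_0]$ closed, the Chow/Hilbert component at $[X_0]$ is then the reduced orbit $G\cdot[X_0]$. This gives the local statement, and homological rigidity follows from Proposition \ref{reduced_to_local_deform}. The delicate point I expect is justifying the surjectivity of restriction and the identification of $\phi_*N$ as a homogeneous bundle over the possibly non-homogeneous $X_1$.
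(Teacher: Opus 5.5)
\textbf{There is a genuine gap, in the gluing step.} Your reduction is sound up to the point where, after normalizing the base, each fiber $\sM_t\cap\pi^{-1}(y)$ is a translate of $\phi^{-1}(y)$; the paper starts the same way. The problem comes when you pass to first-order deformations to glue these fiberwise translates. The hypotheses only concern \emph{actual} deformations.

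\begin{itemize}
\item A pair is non-exceptional when every local deformation comes from the group. This does not give $H^0(F,N_{F/(P_K/P_J)})=\operatorname{im}(\fg_K)$: the Hilbert scheme could be non-reduced at $[F]$ and still agree set-theoretically with the orbit nearby.
\item Likewise, your inductive statement for $X_1$ is about actual deformations, but you use it as ``$H^0(X_1,N_{X_1/(G/P_K)})$ is spanned by $\fg$''.
\end{itemize}

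So surjectivity of $\fg\to H^0(X_0,N_{X_0/X})$ needs a strictly stronger, infinitesimal input. Neither the proposition nor the Picard-number-one results supply it.

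Even granting that input, the rest of the step is unsupported:
\begin{itemize}
\item $\phi_*N_{X_0/\pi^{-1}(X_1)}$ is not a homogeneous bundle on $G/P_K$, because $P_K$ does not stabilize $F$.
\item Surjectivity of restriction of sections to Schubert varieties is known for globally generated line bundles, but fails for vector bundles in general. For example, $\mathcal{E}nd(T_{\mathbb{P}^2})$ has only scalar sections on $\mathbb{P}^2$, but a $4$-dimensional space of sections on a line.
\item The kernel of $H^0(N_{X_0/X})\to H^0(N_{X_1/(G/P_K)})$ has to be hit by the Lie algebra of the stabilizer of $X_1$, not by all of $\fg$. Your argument does not address this.
\end{itemize}

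\textbf{The missing idea is to treat the family of fibers as a deformation problem for another Schubert variety, without linearizing.} This is what the paper does.
\begin{itemize}
\item It regards $z\mapsto[\sM'_t\cap\pi^{-1}(z)]$ as a morphism $\gamma_t\colon X^K(v)\to\operatorname{Chow}(G/P_J)$.
\item Rigidity of the fiber, together with Proposition~\ref{reduced_to_local_deform}, shows that the Chow component through $[X^J(u)]$ is the closed orbit $D=G\cdot[X^J(u)]=G/Q(u)$, with $Q(u)\subset P_K$ parabolic.
\item Then $F_0=\gamma_0(X^K(v))$ is a Schubert variety of $G/Q(u)$, mapped isomorphically onto $X^K(v)$ by $G/Q(u)\to G/P_K$, and $F_t=\gamma_t(X^K(v))$ is a deformation of it.
\item The paper applies the rigidity of $X^K(v)$ to $F_0$ via the base case and Lemma~\ref{pic_one_base}. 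This gives $F_t=g(t)\cdot F_0$.
\item Since $\sM'_t$ is swept out by the cycles parametrized by $F_t$, it follows that $\sM_t=(h(t)^{-1}g(t))\cdot X^J(w)$.
\end{itemize}

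Your ``section of the bundle of fiber deformation spaces'' is exactly $\gamma_t$. It should be controlled by the rigidity of $F_0\subset G/Q(u)$, not by a cohomological surjectivity claim.
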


\begin{proof}
Let $X=G/P_{J}$, $X^J(w)\subset X$ be a smooth Schubert variety. We prove by induction on \textbf{the Picard number $\rho$ of $X^{J}(w)$}. If $\rho$ is one, the conclusion follows from Lemma \ref{pic_num_one} and Lemma \ref{pic_one_base}. In fact, from Theorem \ref{smooth_BP}, there exists a root $\alpha\in I=J^c$ such that the projection $G/P_J\rightarrow G/P_{J\cup \{\alpha\}}$ sends $X_0$ to $X_1$, where $X_i(0\leq i \leq m)$ is given in the statement of the Proposition. If $X^{J}(w)=X_0$ is contracted in the first projection $G/P_J\rightarrow G/P_{J\cup \{\alpha\}}$, we obtain the conclusion from Lemma \ref{pic_num_one}. Otherwise, the first projection is an isomorphism on $X^{J}(w)$, then we obtain the conclusion from Lemma \ref{pic_one_base}, together with an induction argument on the Picard number of $G/P_J$.

If $\rho(X^{J}(w))=k\geq 2$, we assume that the conclusion holds for the cases when $\rho\leq k-1$. 
From Theorem \ref{smooth_BP} there exists a root $\alpha\in I=J^c$ and a Billey-Postnikov decomposition $w=vu$ with respect to $(J, K=J\cup\{\alpha\})$ such that
\begin{itemize}
\item[(i)] the projection $\pi: G/P_{J} \rightarrow G/P_{K}$ sends $X^J(w)$ onto $X^K(v)$,
\item[(ii)] $X^K(v)$ is a smooth Schubert variety of Picard number one in $G/P_K$,
\item[(iii)] $X^{J}(u)$ is a smooth Schubert variety in $P_K/P_J \cong G_K/G_K\cap P_J$ with Picard number $k-1$.
\end{itemize}
In summary, we have the commutative diagram as follows:
\begin{eqnarray*}
\xymatrix{X^J(u)\ar@{^(->}[r]\ar@{^(->}[d] & X^J(w)\ar@{^(->}[d]\ar@{->>}[r] & X^K(v)\ar@{^(->}[d] \\
P_K/P_J\ar@{^(->}[r] & G/P_J\ar@{->>}^-{\pi}[r] & G/P_K.
}
\end{eqnarray*}

Take a holomorphic family $\sM_t\subset G/P_J$, $t\in\Delta$, such that $\sM_0=X^J(w)$. By Lemma \ref{induced_base_deform}, we can shrink $\Delta$ around the origin to obtain a holomorphic family $\sN_t:=\pi(\sM_t)\subset G/P_K$, $t\in\Delta$, such that $\sN_0=X^K(v)$. From the base case $\rho=1$ of the induction, there exists a holomorphic map $h: \Delta\rightarrow G$ with $h(0)=\operatorname{id}$ such that $h(t)\cdot\sN_t= X^K(v)$ as closed subvarieties of $G/P_K$ for all $t\in\Delta$.

As in the proof of Lemma \ref{induced_base_deform}, we write $\sM$ and $\sN$ as closed subvarieties of $G/P_J\times\Delta$ and $G/P_K\times\Delta$ respectively, and the the projection $\pi: G/P_J\rightarrow G/P_K$ induces the smooth map $\Phi: \sM\rightarrow\sN$.
Set $\sM':=h\cdot \sM$ and $\sN':=h\cdot\sN$, i.e., $\sM'_t=h(t)\cdot\sM_t$ and $\sN'_t=h(t)\cdot\sN_t$. Then $\sN'=X^K(v)\times\Delta$ as a closed subvariety of $G/P_K\times\Delta$, and we have a commutative diagram:
\begin{eqnarray*}
\xymatrix{\sM'\ar@{^(->}[r]\ar@{->>}_-{\Phi'}[d] & \sS_J\ar@{->>}^-{\pi\times{\rm id}}[d] \\
\sN'\ar@{^(->}[r] & \sS_K.
}
\end{eqnarray*}
where $\sS_J:=G/P_J\times\Delta$ and $\sS_K:=G/P_K\times\Delta$. Now the holomorphic map $\Phi'$ induces a holomorphic map
\begin{eqnarray*}
\psi: & \sN'\longrightarrow {\rm Chow}(\sS_J/\Delta) \\
& \xi\mapsto(\Phi')^{-1}(\xi).
\end{eqnarray*}
The projection $\sS_J\rightarrow G/P_J$ induces a holomorphic map 
$$\varphi: {\rm Chow}(\sS_J/\Delta)\rightarrow {\rm Chow}(G/P_J).$$
Since $\sN'=X^K(v)\times\Delta$, we get a holomorphic family of holomorphic maps 
\begin{eqnarray*}
\gamma_t: & X^K(v)\rightarrow {\rm Chow}(G/P_J) \\
& z\mapsto\varphi(\psi(z, t)),
\end{eqnarray*}
where $t\in\Delta$. Denote by $F_t$ the image of $\gamma_t$, by $\varrho_t: U_t\rightarrow F_t$ the universal map, and by $\mu_t: U_t\rightarrow G/P_J$ the evaluation map. We can conclude from the commutative diagram
\begin{eqnarray*}
\xymatrix{ U_t\ar^-{\mu_t\times{\rm id}}[r]\ar_-{\varrho_t}[d] & \sM'_t\ar^-{\Phi'_t}[d] \\
F_t & \sN'_t\ar^-{\gamma_t\times{\rm id}}[l]
}
\end{eqnarray*}
that $\gamma_t$ is a closed embedding sending $X^K(v)$ onto $F_t\subset{\rm Chow}(G/P_J)$, and $\sM'_t$ is the image of $\mu_t$ under the identification $G/P_J\times\{t\}\cong G/P_J$.

We claim that, shrinking $\Delta$ around the origin if necessary, there exists a holomorhic map $g:\Delta\rightarrow G$ such that $F_t=g(t)\cdot F_0$. Recall that $\gamma_0$ sends the base point $o\in X^K(v)$ to the element $[X^J(u)]\in {\rm Chow}(G/P_J)$. Let $D$ be the connected component of $ {\rm Chow}(G/P_J)$ containing $[X^J(u)]$. By the inductive assumption, and since  exceptional pair does not appear, there exists an open neighborhood $D^o$ of $[X^J(u)]\in D$ such that $D^o\subset G\cdot[X^J(u)]$. Then we have $D=G\cdot[X^J(u)]$ by Proposition \ref{reduced_to_local_deform}. Let $Q(u)$ be the subgroup of $G$ stabilizing $X^J(u)$. Then $Q(u)$ is a parabolic subgroup of $G$, and we have $D=G/Q(u)$. 

Since $\varphi(\psi(\sN'))$ is connected, we have $F_t\subset \varphi(\psi(\sN'))\subset D$ for all $t$. Let $Q(v)$ be the subgroup of $G$ stabilizing $X^K(v)\subset G/P_K$. Then $Q(v)$ is a parabolic subgroup of $G$.
Note that $\gamma_0$ is a $Q(v)$-equivariant closed embedding, which makes $F_0\cong X^K(v)$ a Schubert subvariety of $D=G/Q(u)$. Moreover, $Q(u)\subset P_K$, and the natural projection $G/Q(u)\rightarrow G/P_K$ maps the Schubert variety $F_0$ isomorphically into the Schubert variety $X^{K}(v)\subset G/P_K$.
 Since $X^K(v)$ is of Picard number one and there is no exceptional pair, the claim follows from the base case $\rho=1$, together with Lemma \ref{pic_one_base}.

By the claim above, we have $U_t=g(t)\cdot U_0$ and $\sM'_t=g(t)\cdot \sM'_0$. By the construction of $\sM'$, we have 
\begin{eqnarray*}
&& \sM_t=h(t)^{-1}\cdot\sM'_t= (h(t)^{-1}g(t))\cdot\sM'_0  \\
&& \hspace{1cm} =(h(t)^{-1}g(t)h(0))\cdot\sM_0=(h(t)^{-1}g(t))\cdot X^J(w).
\end{eqnarray*}
This completes the proof of inductive step, by Proposition \ref{reduced_to_local_deform}. Hence $X^J(w)$ is homologically rigid by induction on its Picard number.
\end{proof}
 Now we are ready to prove Theorem \ref{main_thm_simple_ver}.

\begin{proof}[Proof of Theorem \ref{main_thm_simple_ver}]
   Existence of a sequence of Billey-Postnikov decompositions can be guaranteed by Theorem \ref{smooth_BP}. Since all marked roots in $I$ are long roots, from \cite[Proposition 3.7]{HM13}, any pairs in the fibers are associated with subdiagrams. Then by Proposition \ref{prop-exceptional-pair}, they are not exceptional and the conclusion follows from Proposition \ref{prop homology rigidity reduction}.
\end{proof}

\begin{figure}[htbp]
\centering
\includegraphics[width=0.6\textwidth]{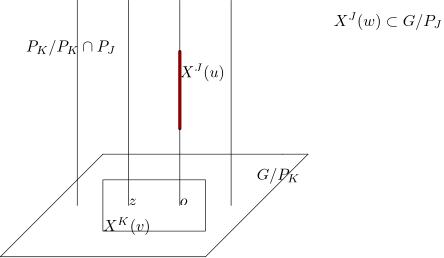}
\caption{ Billey-Postnikov decomposition}
\label{fig:example}
\end{figure}
Also, we immediately have

\begin{corollary}\label{subdiagram_rigid_marked_root_outside}
 Let $X_0 \subset X$ is a Schubert variety of subdiagram type and $L$ be the set of simple roots in the subdiagram. If $I\cap L$ contains no short roots, then $X_0$ has homological rigidity in $X$.
\end{corollary}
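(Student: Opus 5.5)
We will deduce the corollary from Proposition \ref{prop homology rigidity reduction} by exhibiting a tower of Billey-Postnikov decompositions whose fibers are all non-exceptional pairs of subdiagram type. First replace $L$ by its reduction $L_{\rm red}$: this does not change $X_0$, and it does not enlarge $I\cap L$. So we may assume $L$ is reduced, with $X_0=G_L/(G_L\cap P_{I^c})$. Enumerate $I=\{\beta_1,\dots,\beta_s\}$ so that the roots of $I\setminus L$ come first and those of $I\cap L=\{\beta_{p+1},\dots,\beta_s\}$ come last. We then project successively along $G/P_{I^c\cup\{\beta_1,\dots,\beta_{j-1}\}}\to G/P_{I^c\cup\{\beta_1,\dots,\beta_j\}}$. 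By the remark after Theorem \ref{smooth_BP}, every such projection restricted to a subdiagram-type Schubert variety is a Billey-Postnikov decomposition. The image at each stage is again of subdiagram type, namely $G_L/(G_L\cap P_{I^c\cup\{\beta_1,\dots,\beta_j\}})$.

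Next we identify the fibers in the two kinds of steps. When $\beta_j\notin L$, the restricted projection is an isomorphism onto its image, so the central fiber is a point. This gives a trivial pair, which is not exceptional because its dimension is $0$ (compare Proposition \ref{prop-exceptional-pair}(1), which requires $0<\dim X_0<\dim X$). When $\beta_j\in L$, the central fiber is the subdiagram-type Schubert variety $G_{L'}/(G_{L'}\cap P_{\beta_j^c})$, with $L'=L\cap(I^c\cup\{\beta_j\})$, sitting inside the Picard number one space $G_{I^c\cup\{\beta_j\}}/P$ marked at $\beta_j$. Here $\beta_j\in I\cap L$, so $\beta_j$ is a long root by hypothesis.

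It remains to check that each such pair is not exceptional. By Proposition \ref{prop-exceptional-pair}, every exceptional subdiagram pair is marked at a short root: in cases (2)--(4) the marked roots $\alpha_k$ of $C_n$ with $k<n$ and $\alpha_3,\alpha_4$ of $F_4$ are short. The only remaining possibility is case (1) of Lemma \ref{lem-aut-larger}, where the marking of the relevant simple factor is $(B_n,\alpha_n)$, $(C_n,\alpha_1)$ or $(G_2,\alpha_1)$. Here $\alpha_n$ in $B_n$ and $\alpha_1$ in $G_2$ are short, so they are excluded. The case $(C_n,\alpha_1)$ is marked at a long root only when $n=1$, where the space is $\mathbb P^1$ and there is no proper positive-dimensional Schubert subvariety.

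The main subtlety is making sure these markings are read inside the correct connected component of $\Gamma_{I^c\cup\{\beta_j\}}$, and that "long root" in the subgroup agrees with "long root" in $G$. The second point holds because root lengths are inherited by Levi subgroups, and for $A_1$ components every root counts as long, which causes no harm. With these checks done, all fibers are non-exceptional and Proposition \ref{prop homology rigidity reduction} gives homological rigidity.
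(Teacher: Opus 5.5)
Your proposal is correct and follows the paper's own argument: build a tower of Billey--Postnikov decompositions in which the only nontrivial fibers occur when forgetting a root of $I\cap L$, each such fiber being marked at a long root, so that Proposition \ref{prop-exceptional-pair} rules out exceptional pairs and Proposition \ref{prop homology rigidity reduction} gives the conclusion. One small indexing slip: at step $j$ the fiber sits in the connected component of $\Gamma_{I^c\cup\{\beta_1,\dots,\beta_j\}}$ containing $\beta_j$, not in $\Gamma_{I^c\cup\{\beta_j\}}$; since that fiber is still marked only at the long root $\beta_j$, your check is unaffected.
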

\begin{proof}
 Since $I\cap L$ consists of only long roots, no pairs in the fibers of Billey-Postnikov decompositions in each step are exceptional, by Proposition \ref{prop-exceptional-pair}. Then the conclusion follows  from Proposition \ref{prop homology rigidity reduction}.
\end{proof}

\begin{remark}\label{difference_high_picard}
There are some interesting differences between Picard number one and higher Picard number cases.

    From the proof of Proposition \ref{prop homology rigidity reduction}, we can see that what we need is the \textit{existence} of one tower of Billey-Postnikov decompositions such that exceptional pairs do not appear in each step. We have the freedom to choose required  Billey-Postnikov decompositions.
\begin{example}   
 In the type $C$ case, let $X=G/P_J$ with $G=C_n$ and  $I=\{\alpha_{k-1}, \alpha_k\}, J=I^c$. We assume $L=\{\alpha_j\mid k-1\leq j\leq b\}$ for some $k+1\leq b\leq  n-1$. The corresponding marked Dynkin diagram $\Gamma_R(I)$ with subdiagram $\Gamma_{L}(I\cap L)$  is as follows. 
 
      \begin{center}
    
  \begin{tikzpicture}[scale=0.5]
  \draw[thick] (3.6,0.75) -- (8.5,0.75)  ;
  \draw[thick] (3.6,-0.75) -- (8.5,-0.75)  ;
   \draw[thick] (3.6, 0.75) -- (3.6,-0.75)  ;
    \draw[thick] (8.5,0.75) -- (8.5,-0.75)  ;
	\draw[thick] (0,0) -- (1.5,0)  ;
 \draw[thick, dashed] (1.5,0) -- (3,0)  ;
 \draw[thick] (3,0) -- (4.5,0);
 \draw[thick] (6,0) -- (7.5,0)  ;
  \draw[thick] (4.5,0) -- (6,0);
  \draw[thick, dashed] (7.5,0) -- (9,0)  ;
   \draw[-<-,thick, double] (9,0) -- (10.5,0)  ;
\draw[thick] (3,0) -- (4.5,0)  ;
		\draw[ thick, fill=white] (0,0) circle (3pt) node[above, outer sep=3pt]{$\alpha_1$};
	\draw[ thick, fill=white] (1.5,0) circle (3pt)node[above, outer sep=3pt]{$\alpha_2$};
	\draw[ thick, fill=white] (3,0) circle (3pt) node{};
	\draw[ thick, fill=black] (4.5,0) circle (3pt) node[above,outer sep=3pt]{$\alpha_{k-1}$};
 \draw[ thick, fill=black] (6,0) circle (3pt) node[above,outer sep=3pt]{$\alpha_k$};
\draw[ thick, fill=white] (7.5,0) circle (3pt) node[above, outer sep=3pt]{};
\draw[ thick, fill=white] (9,0) circle (3pt) node[above, outer sep=3pt]{};
 \draw[ thick, fill=white] (10.5,0) circle (3pt) node[above,outer sep=3pt]{$\alpha_n$};
	\end{tikzpicture} 	
   \end{center}
  Then if we take $K=J\cup \{\alpha_{k}\}$ and consider the Billey-Postnikov decomposition with respect to $(J, K)$, exceptional pair would appear (see Proposition \ref{prop-exceptional-pair}); but if we take $K'=J\cup \{\alpha_{k-1}\}$ and consider the   Billey-Postnikov decomposition with respect to $(J, K')$, then there is no exceptional pair and the homological rigidity holds from Proposition \ref{prop homology rigidity reduction}.
   
\end{example}   
In the type $B$ case, in contrast to the case of Picard number one, there exist non-rigid smooth Schubert varieties in higher Picard numbers.

\begin{example}\label{TypeB_nonrigid_ex}
    Let $X=G/P_J$ with $G=B_n$ and $I=\{\alpha_k, \alpha_n\}, J=I^c$. We assume that $L$ is the union of $\{\alpha_k\}$ with $\{\alpha_j\mid k+3\leq j\leq n\}$. The corresponding marked Dynkin diagram $\Gamma_R(I)$ with subdiagram $\Gamma_{L}(I)$ is as follows. 
    \begin{center}
  \begin{tikzpicture}[scale=0.5]
  \draw[thick] (2.1,0.75) -- (3.9, 0.75)  ;
  \draw[thick] (2.1,-0.75) -- (3.9,-0.75)  ;
   \draw[thick] (2.1, 0.75) -- (2.1,-0.75)  ;
    \draw[thick] (3.9,0.75) -- (3.9,-0.75)  ;
\draw[thick] (6.6,0.75) -- (11.4, 0.75)  ;
  \draw[thick] (6.6,-0.75) -- (11.4,-0.75)  ;
   \draw[thick] (6.6, 0.75) -- (6.6,-0.75)  ;
    \draw[thick] (11.4,0.75) -- (11.4,-0.75)  ;
    \draw[thick, dashed] (5.1,0.85) -- (12,0.85)  ;
  \draw[thick, dashed] (5.1,-0.85) -- (12,-0.85)  ;
   \draw[thick, dashed] (5.1, 0.85) -- (5.1,-0.85)  ;
    \draw[thick, dashed] (12,0.85) -- (12,-0.85)  ;

    \draw[thick] (-1.5,0) -- (0,0)  ;
	\draw[thick, dashed] (0,0) -- (1.5,0)  ;
 \draw[thick] (1.5,0) -- (3,0)  ;
 \draw[thick] (3,0) -- (4.5,0);
 \draw[thick] (6,0) -- (7.5,0)  ;
  \draw[thick] (4.5,0) -- (6,0);
  \draw[thick, dashed] (7.5,0) -- (9,0)  ;
   \draw[->-,thick, double] (9,0) -- (10.5,0)  ;
\draw[thick] (3,0) -- (4.5,0)  ;
\draw[ thick, fill=white] (-1.5,0) circle (3pt) node[above, outer sep=3pt]{$\alpha_1$};
		\draw[ thick, fill=white] (0,0) circle (3pt) node[above, outer sep=3pt]{$\alpha_2$};
	\draw[ thick, fill=white] (1.5,0) circle (3pt)node{};
	\draw[ thick, fill=black] (3,0) circle (3pt) node[above,outer sep=3pt]{$\alpha_{k}$};
	\draw[ thick, fill=white] (4.5,0) circle (3pt) node{};
 \draw[ thick, fill=white] (6,0) circle (3pt) node{};
\draw[ thick, fill=white] (7.5,0) circle (3pt) node[above, outer sep=3pt]{};
\draw[ thick, fill=white] (9,0) circle (3pt) node[above, outer sep=3pt]{};
 \draw[ thick, fill=black] (10.5,0) circle (3pt) node[above,outer sep=3pt]{$\alpha_n$};
	\end{tikzpicture} 	
   \end{center}  
  
The Schubert variety $X^J(w)$ in this case comes from a product of subdiagrams $(A_1\times B_{n-k-2}, \{\alpha_k\}\times \{\alpha_n\})$. The second factor $(B_{n-k-2}, \{\alpha_n\})$ can be included in $(B_{n-k-1}, \{\alpha_n\})$ (the subdiagram in the dashed box). Since the automorphism group of $(B_{n-k-1}, \{\alpha_n\})$ is $D_{n-k}$, we can take a local deformation of the second factor by a group element  $g\in D_{n-k}$ that does not lie in $B_{n-k-1}$ (see Remark \ref{covering_pair}). This will give a local deformation of $X^J(w)$ in $X$ not from the action of $B_n$, and hence the Schubert variety is not homologically rigid.
\end{example}

\end{remark}

\subsection{The cases of subdiagram type}

We then give a proof of Theorem \ref{homological_subdiagram}, which gives a complete classification of homologically rigid/non-rigid Schubert varieties of subdiagram type. From Proposition \ref{prop-exceptional-pair}, we know exceptional pairs appear in the cases when $G=B_n, C_n, F_4$ or $G_2$. Since $G_2$ case is clear from Proposition \ref{prop homology rigidity reduction}, in this subsection, we discuss the proof of Theorem \ref{homological_subdiagram} based on a case-by-case discussion for $B_n,C_n,F_4$. For types $B_n$ and $C_n$, we first deal with those exceptions and then turn to prove the rigid cases. For type $F_4$, we deal with the cases where all marked roots in $I$ are contained in the subdiagram and then turn to deal with the rest. 
 
 In this subsection, we assume that the Schubert variety $X_0=X^J(w)\subset X=G/P_J$ is associated with a subdiagram $\Gamma_L(I\cap L)$ of the marked Dynkin diagram $\Gamma(I)$, where $I=J^c$.

\subsubsection{$B_n$-cases}\label{Bn_subdiagram}
The following lemma is for Case (8) in Theorem \ref{homological_subdiagram}.
\begin{lemma}\label{type_B_subdiagram_not_rigid}
 Let $X=(B_n, \{\alpha_n\} \cup R_1)(n\geq 3), L=\{\alpha_{b+1},\alpha_{b+2}, \cdots, \alpha_n\}\cup R_2$,
         $1<  b <n$, $R_1 \subset \{\alpha_1, \alpha_2, \dots, \alpha_{b-1}\}, R_2\subset \{\alpha_1, \alpha_2, \dots, \alpha_{b-2}\}$($R_2=\emptyset$ if $b=2$), $R_1\neq \emptyset$. Then $X_0=X^J(w)$ is not homologically rigid.
\end{lemma}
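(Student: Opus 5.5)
The plan is to generalize Example \ref{TypeB_nonrigid_ex}: exhibit a one-parameter family of deformations of $X_0$ inside $X$ that does not come from the $G=B_n$ action. The key isomorphism is $(B_{m},\{\alpha_m\})\cong(D_{m+1},\{\alpha_m\})$ from Lemma \ref{lem-aut-larger}, applied to a $B$-type subdiagram slightly larger than the $B$-component of $L$.

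\textbf{Step 1: locate $X_0$.} The subset $L=\{\alpha_{b+1},\dots,\alpha_n\}\cup R_2$ splits as $L_1\sqcup R_2$ with $L_1=\{\alpha_{b+1},\dots,\alpha_n\}$. Since $R_2\subset\{\alpha_1,\dots,\alpha_{b-2}\}$, the node $\alpha_{b-1}$ separates the two pieces, so $X_0\cong X_{R_2}\times X_{L_1}$, where $X_{L_1}=(B_{n-b},\{\alpha_n\})$. Enlarge $L_1$ to $L_1'=\{\alpha_b,\dots,\alpha_n\}$. Because $\alpha_b\notin I$ (as $R_1\subset\{\alpha_1,\dots,\alpha_{b-1}\}$), the subdiagram $\Gamma_{L_1'}(\alpha_n)$ gives a spinor variety $Y:=(B_{n-b+1},\{\alpha_n\})$. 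This $Y$ contains $X_{L_1}$ as a proper Schubert variety of positive dimension. Also, $L'=L_1'\cup R_2$ is still a union of non-adjacent pieces, since $\alpha_{b-1}\notin L'$; hence the subvariety $X_{R_2}\times Y=G_{L'}/(G_{L'}\cap P_{I^c})$ is a homogeneous subvariety of $X$ containing $X_0$. Here I use $b>1$ and $R_2\subset\{\alpha_1,\dots,\alpha_{b-2}\}$.

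\textbf{Step 2: deform inside $Y$.} We have $\mathrm{Aut}_0(Y)=D_{n-b+2}\supsetneq B_{n-b+1}$, and the orbit $D_{n-b+2}\cdot[X_{L_1}]$ is strictly bigger than $B_{n-b+1}\cdot[X_{L_1}]$. This is the exceptional-pair phenomenon in case (1) of Proposition \ref{prop-exceptional-pair}; equivalently, it follows from Lemma \ref{exceptional_triple} with $X_0=Y$. So we can choose $g(t)\in D_{n-b+2}$ with $g(0)=\mathrm{id}$ such that $g(t)X_{L_1}$ is not a $B_{n-b+1}$-translate for $t\neq 0$. Then set $Z_t:=X_{R_2}\times g(t)X_{L_1}\subset X_{R_2}\times Y\subset X$. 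This family is homologous to $X_0$, because it is a flat deformation.

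\textbf{Step 3: show $Z_t\notin G\cdot X_0$.} This is the main obstacle, and it is exactly where the hypothesis $R_1\neq\emptyset$ must enter. Without it, $X=(B_n,\{\alpha_n\})$ has automorphism group $D_{n+1}$ and rigidity holds. Suppose $Z_t=hX_0$ for some $h\in G$. Use $\alpha\in R_1$ and the Mori contraction $\pi$ forgetting $\alpha_n$, i.e. $G/P_{I^c}\to G/P_{I^c\cup\{\alpha_n\}}$. Since $\alpha_{b}\notin I$ and $\alpha\le\alpha_{b-1}$, $Z_t$ and $X_0$ lie in a single fiber of $\pi$ over the point fixed by $X_{R_2}$. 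Their images under the other contraction, forgetting $R_1$ and landing in $(B_n,\{\alpha_n\})$, are then $g(t)X_{L_1}$ and $X_{L_1}$ inside the same $Y$. Comparing stabilizers via Lemma \ref{stab.subdiagram}, $h$ must lie in the parabolic stabilizing $X_{R_2}\times Y$. That parabolic acts on $Y$ through its Levi factor, which is $B_{n-b+1}$ (times a torus) — not $D_{n-b+2}$. The point is that $R_1\neq\emptyset$ forces $h$ to preserve the flag datum, so the extra $D$-symmetry of $Y$ cannot extend to $G$. This contradicts the choice of $g(t)$.

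The delicate part of Step 3 is justifying that $h$ stabilizes $X_{R_2}\times Y$. I would try to do this by showing that $X_{R_2}\times Y$ is the unique minimal homogeneous subdiagram variety containing both $Z_t$ and $X_0$, with uniqueness forced by the marked node in $R_1$. Once that holds, homological rigidity fails.
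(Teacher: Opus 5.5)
Your Steps 1--2 match the paper's proof, which just refers to Example \ref{TypeB_nonrigid_ex}. That proof enlarges the $B$-block to $\{\alpha_b,\dots,\alpha_n\}$, which is allowed since $\alpha_b\notin I$ and $\alpha_{b-1}\notin L$. It then deforms $X_{L_1}$ inside $Y\cong(D_{n-b+2},\{\alpha_{n-b+2}\})$ by elements of $D_{n-b+2}$ outside $B_{n-b+1}$. It uses $R_1\neq\emptyset$ only to get $\operatorname{Aut}_0(X)=B_n$ (Lemma \ref{lem-aut-larger}).

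Your Step 3 has a genuine gap. The decisive claim, that $hX_0=Z_t$ forces $h$ to stabilize $X_{R_2}\times Y$, is left unproved, and the mechanism you suggest is misdirected:
\begin{enumerate}
\item[(a)] The marked nodes in $R_1$ play no role in showing $Z_t\notin B_n\cdot X_0$. That statement holds verbatim when $R_1=\emptyset$, where $Z_t$ is a $D_{n+1}$-translate but not a $B_n$-translate of $X_0$.
\item[(b)] The uniqueness you would actually need is that $X_{R_2}\times Y$ is the only translate of itself containing $Z_t$. The translate $h(X_{R_2}\times Y)$ is only known to contain $hX_0=Z_t$, not $X_0$, so ``containing both'' is the wrong condition. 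This uniqueness is forced by the $\alpha_n$-coordinate of $Z_t$, not by $R_1$.
\item[(c)] The claim that $Z_t$ and $X_0$ lie in a single fiber of the contraction forgetting $\alpha_n$ is false whenever $R_2$ contains marked roots, since the image is then $X_{R_2}$, which is positive-dimensional.
\item[(d)] You remark that $\operatorname{Aut}_0(X)=D_{n+1}$ when $R_1=\emptyset$, but you never state the conclusion actually needed: $\operatorname{Aut}_0(X)=B_n$ when $R_1\neq\emptyset$. Homological rigidity is measured against $\operatorname{Aut}_0(X)$, and this is the only place the hypothesis enters.
\end{enumerate}

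The missing idea is a concrete invariant separating $Z_t$ from every $B_n$-translate of $X_0$. Write points of $X$ as isotropic flags ending in a maximal isotropic $V_n\subset\mathbb{C}^{2n+1}$, and let $E_1\subset\cdots\subset E_n$ be the standard flag.

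For $hX_0$: since $G_{R_2}$ fixes $E_n$, the $V_n$ occurring in $hX_0$ are exactly those containing $hE_b$. Their common intersection is therefore the $b$-dimensional space $hE_b$.

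For $Z_t$: put $m=n-b+1\geq 2$. Under $Y\cong\mathrm{OG}_+(m+1,2m+2)$, a deformation of $X_{L_1}$ not coming from $B_m$ corresponds to an isotropic line of $E_{b-1}^\perp/E_{b-1}\oplus\mathbb{C}$ not lying in $E_{b-1}^\perp/E_{b-1}$. Concretely, the $V_n$ occurring in $Z_t$ are those with $E_{b-1}\subset V_n$ and $V_n/E_{b-1}$ in one ruling of maximal isotropic subspaces of $u_t^\perp$, for some non-isotropic $u_t\in E_{b-1}^\perp/E_{b-1}$. Since $u_t^\perp$ is nondegenerate of dimension $2m\geq 4$, $SO(u_t^\perp)$ acts irreducibly on it, so such a ruling has zero common intersection. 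The common intersection for $Z_t$ is thus $E_{b-1}$, of dimension $b-1$.

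Hence $Z_t\neq hX_0$ for every $h\in B_n$, with no stabilizer comparison needed. Equivalently, in $(B_n,\{\alpha_n\})\cong(D_{n+1},\{\alpha_{n+1}\})$ the image of $Z_t$ is $\{W\supset U_t\}$ for an isotropic $b$-plane $U_t\not\subset\mathbb{C}^{2n+1}$, whereas $B_n$-translates of the image of $X_0$ come from isotropic $b$-planes inside $\mathbb{C}^{2n+1}$. Combined with $\operatorname{Aut}_0(X)=B_n$, this completes the argument.
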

\begin{proof}
We have the marked Dynkin diagram and subdiagram as follows.
 \begin{center}
  \begin{tikzpicture}[scale=0.5]
  \draw[thick] (2.1,0.75) -- (3.9, 0.75)  ;
  \draw[thick] (2.1,-0.75) -- (3.9,-0.75)  ;
    \draw[thick] (3.9,0.75) -- (3.9,-0.75)  ;
\draw[thick] (6.6,0.75) -- (11.4, 0.75)  ;
  \draw[thick] (6.6,-0.75) -- (11.4,-0.75)  ;
   \draw[thick] (6.6, 0.75) -- (6.6,-0.75)  ;
    \draw[thick] (11.4,0.75) -- (11.4,-0.75)  ;
  \draw[thick, dashed] (5.4,0.85) -- (11.6, 0.85)  ;
  \draw[thick, dashed] (5.4,-0.85) -- (11.6,-0.85)  ;
   \draw[thick, dashed] (5.4, 0.85) -- (5.4,-0.85)  ;
    \draw[thick, dashed] (11.6,0.85) -- (11.6,-0.85)  ;

    \draw[thick] (-1.5,0) -- (0,0)  ;
	\draw[thick, dashed] (0,0) -- (1.5,0)  ;
 \draw[thick] (1.5,0) -- (3,0)  ;
 \draw[thick] (3,0) -- (4.5,0);
 \draw[thick] (6,0) -- (7.5,0)  ;
  \draw[thick, dashed] (4.5,0) -- (6,0);
  \draw[thick, dashed] (7.5,0) -- (9,0)  ;
   \draw[->-,thick, double] (9,0) -- (10.5,0)  ;
\draw[thick] (3,0) -- (4.5,0)  ;
\draw[ thick, fill=white] (-1.5,0) circle (3pt) node[above, outer sep=3pt]{$\alpha_1$};
		\draw[ thick, fill=white] (0,0) circle (3pt) node[above, outer sep=3pt]{$\alpha_2$};
	\draw[ thick, fill=black] (1.5,0) circle (3pt)node[above,outer sep=3pt]{};
	\draw[ thick, fill=white] (3,0) circle (3pt) node{};
	\draw[ thick, fill=white] (4.5,0) circle (3pt) node[above,outer sep=3pt]{};
 \draw[ thick, fill=white] (6,0) circle (3pt) node[above,outer sep=3pt]{$\alpha_b$};
\draw[ thick, fill=white] (7.5,0) circle (3pt) node[above, outer sep=3pt]{};
\draw[ thick, fill=white] (9,0) circle (3pt) node[above, outer sep=3pt]{};
 \draw[ thick, fill=black] (10.5,0) circle (3pt) node[above,outer sep=3pt]{$\alpha_n$};
	\end{tikzpicture} 	
   \end{center}  
    The proof is similar to the discussion of Example \ref{TypeB_nonrigid_ex}. Note that condition $R_1\neq \emptyset$ implies $B_n=\operatorname{Aut}_0(X)$, and $\alpha_{b-1}\not\in L$ implies the existence of deformation not from $B_n=\operatorname{Aut}_0(X)$.
\end{proof}
\begin{proof}[Proof of Theorem \ref{homological_subdiagram} when $G=B_n$]

When $G=B_n$, if the short simple root $\alpha_n \not\in I$ (i.e., it is not marked), or $\alpha_n\not \in L$, it is easy to see from Proposition \ref{prop-exceptional-pair} that there is no exceptional pair in any Billey-Postnikov decomposition and Proposition \ref{prop homology rigidity reduction} works well. When $|I|=1$, homological rigidity holds from Theorem \ref{HM_homological}.

In the following we assume that $\alpha_n \in I \cap L$ and $|I|\geq 2$.  Let $\alpha_s$ be the rightest marked simple root in $I\backslash \{\alpha_n\}$. Let $\pi: X=G/P_J\rightarrow G/P_K$ be the natural projection with $K=R\backslash \{\alpha_s\}$. Then the image $Y_0:=\pi(X_0)$ is again a Schubert variety of subdiagram type on $G/P_K$. By Proposition \ref{prop-exceptional-pair}, $(Y_0, G/P_K)$ is not an exceptional pair and this is a Billey-Postnikov decomposition. Let $\Gamma_{K'}$ be the connected component of $\Gamma_K$ such that the short simple root $\alpha_n\in K'$. 

If $K'$ is contained in $L$, then the $\pi$-fibers of $(X_0, X)$ are associated with Dynkin diagrams whose connected components are of type $A$ or $B$. Moreover, the $B$-factors of the $\pi$-fibers of $X_0$ and $X$ coincide, both being the marked Dynkin diagram $\Gamma_{K'}(\alpha_n)$. Then Proposition \ref{prop-exceptional-pair} implies that we can take Billey-Postnikov decomposition of the $\pi$-fibers of $(X_0, X)\rightarrow (Y_0, G/P_K)$ such that  there are no exceptional pairs. Then by Proposition \ref{prop homology rigidity reduction}, $X_0$ has homological rigidity in $X$.

Now we assume  $K'$ is NOT contained in $L$. Set $b:=\mbox{max}\{i\mid \alpha_i\not\in L\}$. 
By assumption, $s\leq b-1$. If $\alpha_{b-1}\in L$, then by Lemma \ref{type_B_subdiagram_special_rigid} below the Schubert variety $X_0$ has homological rigidity in $X$. If $\alpha_{b-1}\not\in L$, then we are in the situation of Theorem \ref{homological_subdiagram} (8), in which case the Schubert variety $X_0$ is not homologically rigid by Lemma  \ref{type_B_subdiagram_not_rigid}.   
\end{proof} 
    
\begin{lemma}\label{type_B_subdiagram_special_rigid}
 Let $X=(B_n, \{\alpha_n\} \cup R_1)(n\geq 3), L= \{\alpha_{b+1},\alpha_{b+2}, \cdots, \alpha_n\}\cup \{\alpha_{b-1}\}\cup R_2$,
         $1<  b <n$, $R_1 \subset \{\alpha_1, \alpha_2, \dots, \alpha_{b-1}\}, R_2\subset \{\alpha_1, \alpha_2, \dots, \alpha_{b-2}\}$($R_2=\emptyset$ if $b=2$), $R_1\neq \emptyset$. Then $X_0=X^J(w)$ is homologically rigid.
\end{lemma}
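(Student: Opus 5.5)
We will apply Proposition \ref{prop homology rigidity reduction}, choosing the tower of Billey-Postnikov decompositions so that no exceptional pair appears. As in Remark \ref{difference_high_picard}, the point is to pick the order in which marked roots are contracted. For a Schubert variety of subdiagram type, any choice of $K\supset J$ gives a Billey-Postnikov decomposition, so we may project along whichever marked root we like. The dangerous configuration is a fiber of the form $(B_m,\{\alpha_n\})$ with $L$-part $(B_{m'},\{\alpha_n\})$ and $m'<m$, which is exceptional by Proposition \ref{prop-exceptional-pair}(1). In the present lemma the node $\alpha_{b-1}\in L$ is separated from $\{\alpha_{b+1},\dots,\alpha_n\}$ only by the unmarked, non-$L$ node $\alpha_b$. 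We exploit this by contracting first along $\alpha_n$ rather than along a marked root in $R_1$.

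\textbf{Step 1.} Take $K=R\setminus\{\alpha_n\}$ and the projection $\pi:X\to G/P_K=(B_n,\{\alpha_n\})$. The image $Y_0=\pi(X_0)$ is the subdiagram Schubert variety of $\Gamma_{L'}(\alpha_n)$, where $L'$ is the connected component of $L$ containing $\alpha_n$, namely $\{\alpha_{b+1},\dots,\alpha_n\}$. So $Y_0\cong(B_{n-b},\{\alpha_n\})$ sits in $(B_n,\{\alpha_n\})$. This pair has Picard number one, but it is an exceptional pair in the broad sense of Definition \ref{exceptional_pair}, so it cannot serve as the base.

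\textbf{Step 2.} Instead, project first along the marked roots of $R_1$. Set $K=R\setminus R_1'$, where $R_1'$ is chosen so that the central fiber contains $\alpha_n$ together with $\alpha_b$ and $\alpha_{b-1}$. The intended fiber is the component of $R\setminus(R_1\setminus\{\alpha_{b-1}\})$ containing $\alpha_n$. When $\alpha_{b-1}\in R_1$, that fiber is $(B_{n-b+2},\{\alpha_{b-1},\alpha_n\})$, and inside it the Schubert variety has subdiagram $\{\alpha_{b-1}\}\cup\{\alpha_{b+1},\dots,\alpha_n\}$. This fiber has Picard number two. Rather than contracting $\alpha_{b-1}$ first, which would leave the bad $B$-fiber, we contract $\alpha_n$ first. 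The resulting fiber is then $(B_{n-b+2}\cap\ldots,\{\alpha_n\})$ restricted to the component containing $\alpha_n$ after removing $\alpha_{b-1}$. That is $(B_{n-b+1},\{\alpha_n\})$ with sub-$L$ $(B_{n-b},\{\alpha_n\})$, which is still exceptional.

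So the naive orders fail. The genuinely working route is Lemma \ref{pic_one_base}-type reasoning: the image of the factor in $(B_{n-b+2},\{\alpha_{b-1},\alpha_n\})$ maps isomorphically onto a Schubert variety of a space $G/P_K$ whose $\mathrm{Aut}_0$ is $B$ and in which the extra $D$-symmetry is broken by the $\alpha_{b-1}$-marking. The target in this picture is the $\alpha_{b-1}$-factor $\mathbb P^1$ glued through $\alpha_b$ to the $B$-part.

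\textbf{Step 3.} We therefore plan a direct deformation argument on the two-marked-node piece. We will show that a local deformation of $X_0$ must preserve the line $\mathbb P^1_{\alpha_{b-1}}$-direction. Combined with Lemma \ref{stab.subdiagram}, this forces the induced deformation of the $B_{n-b}$-factor to lie in the stabilizer of the flag determined by $\alpha_{b-1}$. Inside $D_{n-b+1}\supset B_{n-b}$ (the automorphism group of the spinor variety), that stabilizer reduces to $B$-type elements. Concretely, a $D$-deformation moves the isotropic subspace off the fixed vector defined by $\alpha_{b-1}$, and this is incompatible with the $\mathbb P^1$-factor being carried along. After this, induction via Proposition \ref{prop homology rigidity reduction} on the remaining $A$-type factors coming from $R_2$ and $R_1\setminus\{\alpha_{b-1}\}$, where no exceptional pairs occur, finishes the proof.

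The main obstacle is Step 3: making precise why the presence of $\alpha_{b-1}\in L$ kills the $D_{n-b+1}$-deformations. This is exactly the property that distinguishes this lemma from Lemma \ref{type_B_subdiagram_not_rigid}. We expect to prove it through the tangent computation of Lemma \ref{stab.subdiagram}, comparing $H^0(N_{X_0/X})$ with $\mathfrak g/\mathfrak q$.
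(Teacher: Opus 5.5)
\textbf{There is a genuine gap: Step 3, which is the entire content of the lemma, is announced but not proved.}

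Your Steps 1--2 correctly diagnose the difficulty. Every tower of Billey--Postnikov decompositions here contains an exceptional $B$-type pair, so Proposition \ref{prop homology rigidity reduction} cannot be applied directly. The trouble lies in the next step.

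\begin{itemize}
\item The claim that a local deformation ``must preserve the $\mathbb P^1_{\alpha_{b-1}}$-direction'' is not argued. The further claim that this confines the deformation of the $B$-factor to $B$-type elements presupposes that every non-$G$ deformation of that factor is a translate by an element of some $D$-group. Nothing in your argument justifies this, and proving it is exactly the difficulty.
\item The relevant ambient group is misidentified. The non-rigid deformations in Lemma \ref{type_B_subdiagram_not_rigid} come from $D_{n-b+2}=\mathrm{Aut}_0(B_{n-b+1},\{\alpha_n\})$, i.e.\ the fiber enlarged by $\alpha_b$. They do not come from $D_{n-b+1}=\mathrm{Aut}_0$ of the $B$-factor itself, whose elements merely stabilize that factor.
\item The lemma assumes only $\alpha_{b-1}\in L$, not $\alpha_{b-1}\in R_1$. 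So there need not be any $\mathbb P^1_{\alpha_{b-1}}$, ``$\alpha_{b-1}$-marking'', or ``two-marked-node piece''.
\item The closing appeal to Proposition \ref{prop homology rigidity reduction} for the ``remaining $A$-type factors'' has no gluing mechanism. That proposition needs a full tower free of exceptional pairs, which is exactly what is unavailable here.
\item The fallback of comparing $h^0(N_{X_0/X})$ with $\dim\mathfrak g/\mathfrak q$ would suffice if equality held, but you have not done that cohomology computation.
\end{itemize}

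\textbf{The missing idea.} The paper works through the \emph{other} factor rather than attacking the $B$-factor directly.
\begin{enumerate}
\item Take $K=J\cup\{\alpha_n\}$. Then $X_0\cong X^K(v)\times X^J(u)$, where $X^K(v)$ comes from the components of $L$ away from $\alpha_n$.
\item Since $R_1$ consists of long roots, $X^K(v)$ is rigid in $G/P_K$, so one may assume the induced base deformation is trivial. Then $\mathcal M_t=X^K(v)\times Y_t$.
\item $\mathcal M_t$ is swept out by translates of $X^K(v)$, which is homologically rigid in $X$ by Corollary \ref{subdiagram_rigid_marked_root_outside}. Hence $Y_t$ is a deformation of $Y_0=X^J(u)$ inside the Chow space $G/P_{R\setminus R_3}$ of $X^K(v)$, computed by Lemma \ref{stab.subdiagram}.
\item Because $\alpha_{b-1}\in L$, the node $\alpha_b$ lies in $N(L)$ and therefore in $R_3$. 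Consequently $X^J(u)$ is an entire fiber of $G/P_{R\setminus R_3}\to G/P_{(R\setminus R_3)\cup\{\alpha_n\}}$.
\item An entire fiber is rigid, by the argument of Lemma \ref{pic_num_one}. So $Y_t=g(t)\cdot Y_0$, and hence $\mathcal M_t=g(t)\cdot X_0$.
\end{enumerate}
This is the rigorous form of your intuition that $\alpha_{b-1}\in L$ breaks the extra $D$-symmetry.
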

\begin{proof}
We have the marked Dynkin diagram and subdiagram as follows.
 \begin{center}
  \begin{tikzpicture}[scale=0.5]
  \draw[thick] (2.1,0.75) -- (3.9, 0.75)  ;
  \draw[thick] (2.1,-0.75) -- (3.9,-0.75)  ;
    \draw[thick] (3.9,0.75) -- (3.9,-0.75)  ;
\draw[thick] (5.1,0.75) -- (11.4, 0.75)  ;
  \draw[thick] (5.1,-0.75) -- (11.4,-0.75)  ;
   \draw[thick] (5.1, 0.75) -- (5.1,-0.75)  ;
    \draw[thick] (11.4,0.75) -- (11.4,-0.75)  ;
  \draw[thick, dashed] (4.3,0.85) -- (11.6, 0.85)  ;
  \draw[thick, dashed] (4.3,-0.85) -- (11.6,-0.85)  ;
   \draw[thick, dashed] (4.3, 0.85) -- (4.3,-0.85)  ;
    \draw[thick, dashed] (11.6,0.85) -- (11.6,-0.85)  ;

    \draw[thick] (-1.5,0) -- (0,0)  ;
	\draw[thick, dashed] (0,0) -- (1.5,0)  ;
 \draw[thick] (1.5,0) -- (3,0)  ;
 \draw[thick] (3,0) -- (4.5,0);
 \draw[thick] (6,0) -- (7.5,0)  ;
  \draw[thick] (4.5,0) -- (6,0);
  \draw[thick, dashed] (7.5,0) -- (9,0)  ;
   \draw[->-,thick, double] (9,0) -- (10.5,0)  ;
\draw[thick] (3,0) -- (4.5,0)  ;
\draw[ thick, fill=white] (-1.5,0) circle (3pt) node[above, outer sep=3pt]{$\alpha_1$};
		\draw[ thick, fill=white] (0,0) circle (3pt) node[above, outer sep=3pt]{$\alpha_2$};
	\draw[ thick, fill=black] (1.5,0) circle (3pt)node[above,outer sep=3pt]{};
	\draw[ thick, fill=white] (3,0) circle (3pt) node{};
	\draw[ thick, fill=white] (4.5,0) circle (3pt) node[above,outer sep=3pt]{$\alpha_b$};
 \draw[ thick, fill=white] (6,0) circle (3pt) node{};
\draw[ thick, fill=white] (7.5,0) circle (3pt) node[above, outer sep=3pt]{};
\draw[ thick, fill=white] (9,0) circle (3pt) node[above, outer sep=3pt]{};
 \draw[ thick, fill=black] (10.5,0) circle (3pt) node[above,outer sep=3pt]{$\alpha_n$};
	\end{tikzpicture} 	
   \end{center}  
One can see that this is the borderline case (see Lemma \ref{type_B_subdiagram_not_rigid}). We can always find exceptional pairs in any Billey-Postnikov decompositions, so Proposition \ref{prop homology rigidity reduction} does not apply; On the other hand, we can not construct a local deformation not from the group action of $B_n$ as in Example \ref{TypeB_nonrigid_ex} or Lemma \ref{type_B_subdiagram_not_rigid} since $\alpha_{b-1}\in L$.

Now we prove the homological rigidity for this case making use of the product structure of the Schubert varieties, together with both the statement and proof of Proposition \ref{prop homology rigidity reduction}. Let $K=J\cup \{\alpha_n\}$ and take the Billey-Postnikov decomposition $w=vu$ with respect to $(J,K)$. Denote by $\pi: G/P_J\rightarrow G/P_K$ the natural projection. Note that in this case $X_0\cong X^{K}(v) \times X^{J}(u)$, we still use $X^{K}(v)$ to denote the Schubert variety in $X=G/P_J$ associated with the collection of connected components in $L$ without $\alpha_n$. 

In the base level, any further Billey-Postnikov decomposition of $X^{K}(v)\subset G/P_K$ has no exceptional pair, from Proposition \ref{prop-exceptional-pair} (since all marked roots in $R_1=I\setminus \{\alpha_n\}$ are long roots). Hence $X^{K}(v)\subset G/P_K$ is homologically rigid, by Proposition \ref{prop homology rigidity reduction} or Theorem \ref{main_thm_simple_ver}. From the proof of Proposition \ref{prop homology rigidity reduction}, without loss of generality we may assume that the local deformation $\mathcal{M} \subset G/P_J\times \Delta$ induces a trivial deformation of $X^{K}(v)$ in the base level. 
 Shrinking $\Delta$ around the origin, then for each $t\in \Delta$, $\mathcal{M}_t$ is still a product. We may write $\mathcal{M}_t=X^K(v)\times Y_t$ with $\mathcal{M}_0=X^{K}(v)\times X^J(u)$ and $Y_t\subset P_K/P_K\cap P_J$. Define \[ \gamma_t:Y_t\rightarrow \operatorname{Chow}(G/P_J)\]
by $\gamma_t(y)=[X^{K}(v)\times \{y\}]$ for $y\in Y_t$. From Lemma \ref{stab.subdiagram} and Corollary \ref{subdiagram_rigid_marked_root_outside}, by identifying $X^{K}(v)$ as a Schubert variety of subdiagram type in $X$ (which is homologically rigid), the Chow space of $X^{K}(v)$ in $X=G/P_J$ is $G/P_{R\backslash R_3}$ where $R_3=(R_1\backslash L) \cup R'_2\cup\{\alpha_b,\alpha_n\}$, with $R'_2\subset \{\alpha_1,\alpha_2,\cdots, \alpha_{b-2}\}$ being the subset of simple roots in $\{\alpha_1,\alpha_2,\cdots, \alpha_{b-2}\}$ adjacent to $\Gamma_L$. 

Now, $Y_t$ is a local deformation of $Y_0=X^{J}(u)$ in $G/P_{R\backslash R_3}$. 
Consider the natural projection $\phi: G/P_{R\backslash R_3}\rightarrow G/P_{(R\backslash R_3)\cup\{\alpha_n\}}$. Then $X^J(u)$ coincides with the fiber of $\phi$ at the base point. Hence $X^{J}(u)$ has homological rigidity in $G/P_{R\backslash R_3}$ and there is a holomorphic map $g:\Delta \rightarrow G$ such that $Y_t=g(t)\cdot Y_0$. Hence $\mathcal{M}_t=g(t)\cdot X_0$. Note that we have assumed that the deformation at the base level is trivial, $g(t)$ stabilizes $X^{K}(v)\subset G/P_K$ and hence $g(t)\in 
P_{R\backslash R_4}$ where $R_4=R_3\backslash \{\alpha_n\}$. Thus $g(t)$ actually stabilizes $X_0 \subset X=G/P_J$, i.e., $\mathcal{M}$ is also a trivial family assuming the induced family at the base level is trivial.
\end{proof}

\subsubsection{$C_n$-cases}\label{Cn_subdiagram}
The following lemma is for Case (7) in Theorem \ref{homological_subdiagram}.
\begin{lemma}\label{Type_C_subdiagram_not_rigid}
    Let $X=(C_n,\{\alpha_k\}\cup R_1)(n\geq 3), L=\{\alpha_{k},\alpha_{k+1}\cdots, \alpha_{b-1}\}\cup R_2$,
         $2\leq k< b\leq n$, $R_1\subset \{\alpha_1,\alpha_2,\dots, \alpha_{k-2}, \alpha_{k-1}\}, R_2\subset \{\alpha_1,\alpha_2,\dots, \alpha_{k-2}\}$ ($R_2=\emptyset$ if $k=2$). Then $X_0=X^J(w)$ is not homologically rigid.
\end{lemma}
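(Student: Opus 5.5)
We will mimic the argument of Lemma \ref{type_B_subdiagram_not_rigid} and Example \ref{TypeB_nonrigid_ex}: we exhibit a local deformation of $X_0$ in $X$ that is not induced by $\operatorname{Aut}_0(X)$. Homological rigidity then fails, since a nontrivial local deformation has the same homology class as $X_0$.

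\emph{Step 1: the right ambient subspace.} Put $L'=\{\alpha_k,\alpha_{k+1},\dots,\alpha_n\}$. The marked subdiagram $\Gamma_{L'}(\alpha_k)$ is $(C_{n-k+1},\{\alpha_k\})$, i.e.\ a Lagrangian Grassmannian-type space $X'$ sitting in $X$ as a Schubert variety of subdiagram type. The subdiagram $\Gamma_{\{\alpha_k,\dots,\alpha_{b-1}\}}(\alpha_k)$ is $(A_{b-k},\{\alpha_k\})\cong\mathbb{P}^{b-k}$. Since $b\le n$, it is a linear subspace properly contained in the maximal linear subspace $Y\subset X'$ associated with $\Gamma_{L'}(\alpha_k)$, as in Lemma \ref{C_n_exceptional} (with $k$ replaced by the first node of $C_{n-k+1}$). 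Note that $\operatorname{Aut}_0(Y)$ is of type $A_{2(n-k)+1}$, which is strictly bigger than the subgroup of $G_{L'}$ stabilizing $Y$.

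\emph{Step 2: product structure.} Because $R_2\subset\{\alpha_1,\dots,\alpha_{k-2}\}$, no node of $R_2$ is adjacent to $\{\alpha_k,\dots,\alpha_{b-1}\}$. Hence $X_0\cong X_0'\times X_0''$, where $X_0''=\mathbb{P}^{b-k}$ comes from $\{\alpha_k,\dots,\alpha_{b-1}\}$ and $X_0'$ comes from $R_2$. Moreover, $\alpha_{k-1}\notin L$, so $L\cup L'$ splits as $R_2\sqcup L'$ with no edges between the parts. Therefore $G_{R_2}\times G_{L'}$ acts on $X_0'\times X'\subset X$, and this product is the Schubert variety of $R_2\cup L'$.

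\emph{Step 3: the deformation.} Take a one-parameter subgroup $h(t)\subset\operatorname{Aut}_0(Y)\cong PGL_{2(n-k)+2}$ that does not lie in the image of $G_{L'}$. Let $Z_t=X_0'\times h(t)\cdot X_0''\subset X_0'\times Y\subset X$. This is a holomorphic family of subvarieties with $Z_0=X_0$, all in the homology class of $X_0$. We claim $Z_t$ is not a $G$-translate of $X_0$ for general $t$. Here we use $R_1\neq\emptyset$ or $k\ge2$, together with Lemma \ref{lem-aut-larger}, to ensure $\operatorname{Aut}_0(X)=G=C_n$, since $X$ is not $(C_n,\{\alpha_1\})$.

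\emph{Step 4: verifying the claim (main obstacle).} This step carries the real content. We need to show that if $g\cdot X_0=Z_t$, then $g$ must preserve the $\mathbb{P}^{b-k}$-factor inside $Y$ in a way compatible with $C$-type geometry. Concretely, the linear subspaces $\mathbb{P}^{b-k}$ in $Y\cong\mathbb{P}^{2(n-k)+1}$ of the form $G$-translates of $X_0''$ are those corresponding to isotropic subspaces for the induced symplectic form. A generic $PGL$-translate is not isotropic when $b-k\ge1$, whereas for $b-k=0$ the comparison is via the stabilizer in Lemma \ref{stab.subdiagram}. The plan is to show the following. First, any $g$ with $g\cdot X_0\subset X_0'\times Y$ lies in the stabilizer of $X_0'\times X'$, using Lemma \ref{stab.subdiagram} together with the dimension count $\dim G\cdot[X_0]<\dim$ of the family. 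Second, by Lemma \ref{exceptional_triple}(2), which applies since $(X_0'',X')$ is an exceptional pair by Lemma \ref{C_n_exceptional} and $X$ has Picard number at least two or $X'\subsetneq X$, the family $Z_t$ is a genuinely non-$G$ deformation. Alternatively, we compare the dimensions of the Chow component and of $G/\mathrm{Stab}(X_0)$ to conclude that $Z_t\notin G\cdot[X_0]$.
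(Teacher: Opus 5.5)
Your argument is the paper's own. $X_0$ splits as the $R_2$-factor times $\mathbb{P}^{b-k}$, and the second factor lies in the subspace attached to $\Gamma_{\{\alpha_k,\dots,\alpha_n\}}(\alpha_k)$, whose automorphism group $A_{2n-2k+1}$ strictly contains $C_{n-k+1}$. Moving that factor by a suitable element of $A_{2n-2k+1}$ gives a deformation not induced by $\operatorname{Aut}_0(X)=C_n$.

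The isotropy check in your Step 4 is exactly the verification the paper leaves implicit. After projecting to $G/P_{R\setminus\{\alpha_k\}}$, every $G$-translate of $X_0$ becomes the set of $k$-spaces lying between an isotropic $(k-1)$-space and an isotropic $b$-space. A generic $PGL$-translate of the $\mathbb{P}^{b-k}$ factor instead spans a non-isotropic $b$-space, because $b-k\geq 1$. This check alone suffices, so the shakier alternatives you list in Step 4 can be dropped.

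Two small corrections, neither of which affects the argument. First, $(C_{n-k+1},\{\alpha_k\})$, marked at the end node away from the double bond, is itself $\mathbb{P}^{2n-2k+1}$, not a Lagrangian Grassmannian, so your $X'$ and $Y$ coincide. Second, $h(t)$ must be chosen generic rather than merely outside $C_{n-k+1}$, since an element outside $C_{n-k+1}$ can still stabilize $X_0''$.
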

\begin{proof}
We have the marked Dynkin digram and subdigram as follows.
 \begin{center}
  \begin{tikzpicture}[scale=0.5]
  \draw[thick] (2.1,0.75) -- (3.9, 0.75)  ;
  \draw[thick] (2.1,-0.75) -- (3.9,-0.75)  ;
    \draw[thick] (3.9,0.75) -- (3.9,-0.75)  ;
\draw[thick] (5.1,0.75) -- (8.4, 0.75)  ;
  \draw[thick] (5.1,-0.75) -- (8.4,-0.75)  ;
   \draw[thick] (5.1, 0.75) -- (5.1,-0.75)  ;
    \draw[thick] (8.4,0.75) -- (8.4,-0.75)  ;
    \draw[thick, dashed] (4.8,0.85) -- (12.5,0.85)  ;
  \draw[thick, dashed] (4.8,-0.85) -- (12.5,-0.85)  ;
   \draw[thick, dashed] (4.8, 0.85) -- (4.8,-0.85)  ;
    \draw[thick, dashed] (12.5,0.85) -- (12.5,-0.85)  ;

    \draw[thick] (-1.5,0) -- (0,0)  ;
	\draw[thick, dashed] (0,0) -- (1.5,0)  ;
 \draw[thick] (1.5,0) -- (3,0)  ;
 \draw[thick] (3,0) -- (4.5,0);
 \draw[thick,dashed] (6,0) -- (7.5,0)  ;
  \draw[thick] (4.5,0) -- (6,0);
  \draw[thick] (7.5,0) -- (9,0)  ;
   \draw[thick,dashed] (9,0) -- (10.5,0)  ;
   \draw[-<-,thick, double] (10.5,0) -- (12,0)  ;
\draw[thick] (3,0) -- (4.5,0)  ;
\draw[ thick, fill=white] (-1.5,0) circle (3pt) node[above, outer sep=3pt]{$\alpha_1$};
		\draw[ thick, fill=white] (0,0) circle (3pt) node[above, outer sep=3pt]{$\alpha_2$};
	\draw[ thick, fill=black] (1.5,0) circle (3pt)node[above, outer sep=3pt]{};
	\draw[ thick, fill=white] (3,0) circle (3pt)node{};
	\draw[ thick, fill=white] (4.5,0) circle (3pt) node{};
 \draw[ thick, fill=black] (6,0) circle (3pt) node[above, outer sep=3pt]{$\alpha_k$};
\draw[ thick, fill=white] (7.5,0) circle (3pt) node[above, outer sep=3pt]{};
\draw[ thick, fill=white] (9,0) circle (3pt) node[above, outer sep=3pt]{};
 \draw[ thick, fill=white] (10.5,0) circle (3pt) node[above,outer sep=3pt]{};
 \draw[ thick, fill=white] (12,0) circle (3pt) node[above,outer sep=3pt]{$\alpha_n$};
	\end{tikzpicture} 	
   \end{center}  

   We can see that in this case the Schubert variety is a product. Consider the factor in the dashed box. We know the automorphism group of $(C_{n-k+1}, \{\alpha_k\})$ is $A_{2n-2k+1}$, then take a local deformation of the factor by a group element $g\in A_{2n-2k+1}$ that does not lie in $C_{n-k+1}$ (see Remark \ref{covering_pair}). This gives a local deformation of $X^J(w)$ in $X$ not from the action of $C_n$ and hence the Schubert variety is not homologically rigid.
\end{proof}
\begin{proof}[Proof of Theorem \ref{homological_subdiagram} when $G=C_n$]
Let $\alpha_k$ be the rightest marked simple root in $I$ on the Dynkin diagram $\Gamma$. If $k=1$, then $X=(C_n,\{\alpha_1\}) \cong (A_{2n-1}, \{\alpha_1\})$. In the following we assume that $k\geq 2$. Let $\pi: X=G/P_J\rightarrow G/P_K$ be the natural projection with $K=R\backslash \{\alpha_k\}$. Then the image $Y_0:=\pi(X_0)$ is again a Schubert variety of subdiagram type on $G/P_K$ and this is a Billey-Postnikov decomposition. If $(Y_0, G/P_K)$ is an exceptional pair, then from Proposition \ref{prop-exceptional-pair}, we are in the situation of Theorem 1.3 (7), which has been settled by Lemma \ref{Type_C_subdiagram_not_rigid}. If $(Y_0, G/P_K)$ is not an exceptional pair, note that the central fiber of $\pi$ is $P_K/P_J=G_K/(G_K\cap P_J)=G_{K'}/(G_{K'}\cap P_J)$ such that $G_{K'}$ is of type $A$, where $K'$ denotes the set of simple roots in $K$ lying to the left of $\alpha_k$. 
Then if we do further Billey-Postnikov decompositions, there is no exceptional pair by Proposition \ref{prop-exceptional-pair} and homological rigidity holds by Proposition \ref{prop homology rigidity reduction}.
\end{proof}

   \subsubsection{$F_4$-cases}
  
   The proof of Theorem \ref{homological_subdiagram} when $G=F_4$ follows from the following lemmas.
\begin{lemma}\label{F4_no_marked_root_out}
    When $G=F_4$, if $I\subset L$, $X_0=X^{J}(w)\subset X$ is homologically rigid except when we are in the situation (1),(2) in Theorem \ref{homological_subdiagram}, which are
    \begin{enumerate}[label=$\bullet$]
        \item  $X=(F_4,\{\alpha_3\}), L=\{\alpha_2,\alpha_3\}$ or $\{\alpha_3\}$;
        \item $X=(F_4,\{\alpha_4\}),  L=\{\alpha_3,\alpha_4\}$ or $\{\alpha_4\}$.
     \end{enumerate}
\end{lemma}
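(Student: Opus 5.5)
The plan is a case-by-case analysis on $I\subset\{\alpha_1,\ldots,\alpha_4\}$ with $I\subset L$. I will use Bourbaki's convention: $\alpha_1,\alpha_2$ are long and $\alpha_3,\alpha_4$ are short.

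\emph{Easy cases.} If $I\cap\{\alpha_3,\alpha_4\}=\emptyset$, then all marked roots in $I\cap L=I$ are long, and Corollary \ref{subdiagram_rigid_marked_root_outside} gives homological rigidity. If $|I|=1$, Proposition \ref{prop-exceptional-pair} together with Theorem \ref{HM_homological} leaves exactly the four listed exceptions. Indeed, the $F_4$ Picard-number-one spaces are not in Lemma \ref{lem-aut-larger}, so exceptional and non-rigid coincide there. These are Cases (1),(2) of Theorem \ref{homological_subdiagram}. Every other $L$ for $|I|=1$ is rigid.

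\emph{Higher Picard number.} Assume $|I|\ge2$ and $I$ contains a short root. For each such $I$ I will choose a marked root $\alpha\in I$ and apply Proposition \ref{prop homology rigidity reduction}. The fibration is the first Billey--Postnikov step $G/P_J\to G/P_{J\cup\{\alpha\}}$, or alternatively a projection to $G/P_{R\setminus\{\beta\}}$. The choice of $\alpha$ should make every Picard-number-one pair appearing avoid the exceptional configurations of Proposition \ref{prop-exceptional-pair}.

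Since $X_0$ is of subdiagram type, any order of contractions gives a Billey--Postnikov decomposition by the Remark after Theorem \ref{smooth_BP}. Each fiber pair is then again of subdiagram type, with subdiagram given by a connected piece of $L$ inside a connected piece of the Dynkin diagram. The fiber spaces are of types $A$, $B_2=C_2$, $B_3$, $C_3$, or $F_4$ itself at the last step.

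The guiding principle is to \emph{contract the short marked roots last}. Then, at the final stage, the base is $(F_4,\{\alpha_i\})$ with $\alpha_i$ short. Because $L\supset I\ni\alpha_i$, and $L$ still contains the roots whose neighbours were marked, the image subdiagram should be larger than $\{\alpha_3\},\{\alpha_2,\alpha_3\},\{\alpha_4\},\{\alpha_3,\alpha_4\}$. I expect this to be automatic whenever the second marked root lies in $L$ and is adjacent to the relevant component; when it is not, I will choose the order in reverse. The intermediate fibers are of type $B_3$ or $C_3$, coming from $\{\alpha_2,\alpha_3,\alpha_4\}$ or $\{\alpha_1,\alpha_2,\alpha_3\}$, or of type $A$, or $B_2$. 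For these I check Proposition \ref{prop-exceptional-pair}(1),(2):
\begin{itemize}
\item a $C_3$ fiber $\{\alpha_2,\alpha_3,\alpha_4\}$ with a marked short root is harmless when $L$ contains the whole fiber subdiagram, since then $X_0$-fiber equals the fiber;
\item for a $B_3$ fiber $(B_3,\{\alpha_3\})$, here on nodes $\{\alpha_1,\alpha_2,\alpha_3\}$ with $\alpha_3$ short, the same reasoning applies;
\item $(C_2,\{\alpha\})\cong(B_2,\cdot)$ type fibers are quadric/projective and need separate care.
\end{itemize}

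\emph{Main obstacle.} The hard step is the small configurations such as $I=\{\alpha_3,\alpha_4\}$ with $L=\{\alpha_3,\alpha_4\}$. There the fiber pair is either the full fiber or appears in Lemma \ref{lem-aut-larger}, e.g. $(C_2,\{\alpha_4\})\cong(A_3,\{\alpha_1\})$ with a proper linear subspace. In such cases every decomposition may contain an exceptional pair, as in Lemma \ref{type_B_subdiagram_special_rigid}. My plan is to mimic that lemma's proof. First contract a root $\beta\in I$ so that the base pair is non-exceptional, and normalize the deformation to be trivial on the base. Then identify the Chow component of the fiber part with a homogeneous space $G/P$ via Lemma \ref{stab.subdiagram}. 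In that space the fiber becomes a full fiber of a contraction, hence rigid by Lemma \ref{pic_num_one}. Finally, check that the resulting group element stabilizes $X_0$. I expect this to succeed in all $|I|\ge2$ cases with $I\subset L$, because $L\supset I$ forces the neighbour sets to be marked. This is what distinguishes these cases from Cases (3)--(6), where some marked root lies outside $L$.
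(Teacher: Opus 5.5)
Your handling of $I\cap\{\alpha_3,\alpha_4\}=\emptyset$ and of $|I|=1$ is fine. For $|I|\ge 2$, however, you replace the actual case analysis by a heuristic, and it fails exactly where the work lies. ``Contract the short marked roots last'' breaks down for $I=L=\{\alpha_1,\alpha_3\}$, because nothing forces the component of $L$ through the last short marked root to be large. Contracting $\alpha_1$ first leaves the base pair $(\{\alpha_3\},(F_4,\{\alpha_3\}))$, which is Case (1) of Theorem \ref{homological_subdiagram}. The reverse order gives the fibre $(C_3,\{\alpha_3\})$ on $\{\alpha_2,\alpha_3,\alpha_4\}$, whose $X_0$-fibre is a line properly contained in the maximal linear subspace $Y$ of Lemma \ref{C_n_exceptional}; this is again exceptional.

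The same double failure occurs for $I=L=\{\alpha_1,\alpha_4\}$ and for $I=\{\alpha_1,\alpha_4\}$, $L=\{\alpha_1,\alpha_3,\alpha_4\}$. In one order you get a proper linear subspace of the fibre $(C_3,\{\alpha_4\})\cong\mathbb{P}^5\cong(A_5,\{\alpha_1\})$; in the other, the base $(F_4,\{\alpha_4\})$ with image $\{\alpha_4\}$ or $\{\alpha_3,\alpha_4\}$. These are the paper's Cases (a), (b), (c). The substance of the paper's proof is the explicit check that every other configuration admits a tower without exceptional pairs:
\begin{itemize}
\item $K=J\cup\{\alpha_4\}$ for $I=\{\alpha_3,\alpha_4\}$;
\item base $(F_4,\{\alpha_2\})$ with fibre of type $A_1\times A_2$ when $\alpha_2\in I$;
\item base $(F_4,\{\alpha_1\})$ with $C_3$-fibre when $\alpha_1\in I$;
\item for $|I|=3$, an $A_2$-fibre reducing to a two-marked-root case outside (a)--(c).
\end{itemize}
The configuration $I=\{\alpha_1,\alpha_4\}$, $L=\{\alpha_1,\alpha_2,\alpha_4\}$ shows the same double failure ($\mathbb{P}^1\subsetneq\mathbb{P}^5$ in one order, $(\{\alpha_4\},(F_4,\{\alpha_4\}))$ in the other), so it belongs on that list too. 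The product argument below, with base factor $\{\alpha_1,\alpha_2\}$ and $R_3=\{\alpha_3,\alpha_4\}$, handles it.

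Conversely, the case you single out as the main obstacle, $I=L=\{\alpha_3,\alpha_4\}$, is immediate. Contracting $\alpha_4$ first gives the full fibre $\mathbb{P}^1$ and the non-exceptional base pair $(\{\alpha_3,\alpha_4\},(F_4,\{\alpha_3\}))$. Your justification there is also wrong: no $(C_2,\{\alpha_4\})$ occurs in $F_4$, and the bad fibre in the other order is $(B_3,\{\alpha_3\})\cong(D_4,\{\alpha_3\})$. Moreover, this $X_0$ is the flag variety of $A_2$ rather than a product, so the argument of Lemma \ref{type_B_subdiagram_special_rigid} would not even apply to it.

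That product argument is what is needed for (a)--(c), where $L$ is disconnected and $X_0\cong\mathbb{P}^1\times\mathbb{P}^1$ or $\mathbb{P}^1\times\mathbb{P}^2$. Project to $(F_4,\{\alpha_1\})$; since $\alpha_1$ is long, the base factor is rigid and the deformation can be normalized there. Then identify the Chow space of the $\{\alpha_1\}$-factor with $G/P_{R\setminus R_3}$, where $R_3=\{\alpha_2,\alpha_3\}$ or $\{\alpha_2,\alpha_4\}$, using Lemma \ref{stab.subdiagram} and Corollary \ref{subdiagram_rigid_marked_root_outside}. Contrary to what you assert, the other factor is \emph{not} a full fibre of a contraction in Cases (a), (b). Its rigidity in $G/P_{R\setminus R_3}$ comes from Lemma \ref{F4_marked_outside}: $\alpha_2$ is marked but lies outside the subdiagram, and the factor is a line in the $\mathbb{P}^2$-fibre (type $A_1\times A_2$) of the projection to $(F_4,\{\alpha_2\})$. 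Without the correct list of problematic configurations and these verifications, the proposal does not yet prove the lemma.
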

\begin{proof}
     When only one simple root is marked (i.e., $|I|=1$), we already know the non-rigid cases from \cite[Theorem 1.1]{HM13} (Cases (1),(2) in Theorem \ref{homological_subdiagram}). Now we consider the cases when two simple roots are marked (i.e., $|I|=2$). If $I=J^c=\{\alpha_1,\alpha_2\}$, no short root is marked and homological rigidity holds from Theorem \ref{main_thm_simple_ver}. If $I=J^c=\{\alpha_3, \alpha_4\}$, let $K=J\cup \{\alpha_4\}$, then take the Billey-Postnikov decomposition with respect to $(J,K)$ and no exceptional pair appears by Proposition \ref{prop-exceptional-pair}, since $\alpha_3,\alpha_4\in L$. 
     When only one root in $\{\alpha_3, \alpha_4\}$ is marked and $\alpha_2\in I, \alpha_1\notin I$, let $K=R\setminus \{\alpha_2\}$, then take the Billey-Postnikov decomposition with respect to $(J,K)$, in the base direction the marked root $\alpha_2$ is a long root and in the fiber direction it is of type $A_1\times A_2$, then the homological rigidity follows from Proposition \ref{prop homology rigidity reduction}. If only one root in $\{\alpha_3, \alpha_4\}$ is marked and $\alpha_2\notin I, \alpha_1\in I$,
     let $K=R\backslash\{\alpha_1\}$ and take the Billey-Postnikov decomposition with respect to $(J,K)$, then the fiber direction is of $C_3$ type, from Proposition \ref{prop-exceptional-pair}, the cases for which Proposition \ref{prop homology rigidity reduction} could not apply is 
     \begin{enumerate}
       \item[(a)] $I=L=\{\alpha_1,\alpha_3\}$;
       \item[(b)]  $I=L=\{\alpha_1, \alpha_4\}$;
       \item[(c)]  $I=\{\alpha_1, \alpha_4\}, L=\{\alpha_1, \alpha_3, \alpha_4\}$.
     \end{enumerate} All these cases will be shown to be homologically rigid by Lemma \ref{F4_special_case}.
     
     When we have three marked simple roots, $I\subset L$ forces $I=L$. If $\alpha_1\notin I $ or $\alpha_2\not\in I$, let $K=R\backslash \{\alpha_3, \alpha_4\}$; If $\alpha_3\notin I $ or $\alpha_4\not\in I$, let $K=R\backslash \{\alpha_1, \alpha_2\}$. Then we take the Billey-Postnikov decomposition with respect to $(J,K)$, the fiber direction is always $A_2$ type and exceptional pair does not appear. Now, in the base direction, the remaining discussion reduces to the cases where two simple roots are marked, one can observe that they do not belong to Cases (a), (b), (c) and have been settled.
\end{proof}
\begin{lemma}\label{F4_special_case}
    When $G=F_4$, for Cases (a)(b)(c) in Lemma \ref{F4_no_marked_root_out}, $X_0=X^{J}(w)\subset X$ is homologically rigid.
\end{lemma}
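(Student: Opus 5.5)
The plan is to imitate the proof of Lemma \ref{type_B_subdiagram_special_rigid}. We use a Billey-Postnikov decomposition whose base step is rigid, normalize the deformation so that it is trivial on the base, and then treat the fiber deformation through a Chow-space argument. In all three cases $I=\{\alpha_1,\alpha_j\}$ with $j\in\{3,4\}$ and $\alpha_1\in L$. We choose $K=J\cup\{\alpha_j\}$, i.e.\ we contract the short marked root first. The projection $\pi:G/P_J\to G/P_K$ has central fiber $P_K/P_J$, which is of type $(C_3,\{\alpha_j\})$ or smaller, with $\Gamma_{\{\alpha_2,\alpha_3,\alpha_4\}}$ being $C_3$. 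The base is $G/P_K=(F_4,\{\alpha_1\})$, marked at a long root. The image $Y_0=\pi(X_0)$ is of subdiagram type associated with $\Gamma_{L}(\alpha_1)$, reduced appropriately.

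\textbf{Case (a), $L=I$.} Here $X_0\cong\mathbb{P}^1\times\mathbb{P}^1$ is a product of the $\alpha_1$-line and the $\alpha_3$-line. \textbf{Case (b)} has the same shape. \textbf{Case (c)} gives a product of $\mathbb{P}^1$ with the $(C_2,\{\alpha_4\})\cong$ $\mathbb{P}^3$-type factor associated with $\{\alpha_3,\alpha_4\}$. In each case $X_0\cong X^K(v)\times X^J(u)$, since $\alpha_2\notin L$ separates the components.

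By Theorem \ref{HM_homological}, applied at the long root $\alpha_1$, $Y_0$ is homologically rigid in $G/P_K$. Following the proof of Proposition \ref{prop homology rigidity reduction}, we may therefore assume that the local deformation $\mathcal M_t$ induces the trivial deformation of $Y_0$. Each $\mathcal M_t$ is then a family of deformations of the fiber $X^J(u)$ parametrized over $Y_0$. We regard $\mathcal M_t$ as $X^K(v)\times Y_t$, where $Y_t$ is a deformation of $Y_0'=X^J(u)$ inside the Chow space of translates of the factor $X^K(v)$. This factor is the $\alpha_1$-line, viewed as a subdiagram-type Schubert variety with only the long marked root $\alpha_1$, so it is rigid by Corollary \ref{subdiagram_rigid_marked_root_outside}. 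By Lemma \ref{stab.subdiagram}, its Chow space is $G/P_{R\setminus R_3}$ with $R_3=N(\{\alpha_1\})\cup(I\setminus\{\alpha_1\})=\{\alpha_2,\alpha_j\}$. Thus $Y_t$ is a deformation of $X^J(u)$ inside $(F_4,\{\alpha_2,\alpha_j\})$.

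Now $X^J(u)$ is of subdiagram type in $(F_4,\{\alpha_2,\alpha_j\})$ with subdiagram $\{\alpha_3\}$, $\{\alpha_4\}$ or $\{\alpha_3,\alpha_4\}$. Its rigidity there has already been settled in the proof of Lemma \ref{F4_no_marked_root_out}, in the two-marked-roots case with $\alpha_2\in I$ and $\alpha_1\notin I$. That argument projects away $\alpha_2$, where $\alpha_2$ is a long root, leaving fibers of type $A_1\times A_2$. Alternatively, one could use the projection contracting $\alpha_j$, exactly as in Lemma \ref{type_B_subdiagram_special_rigid}. Hence $Y_t=g(t)\cdot Y_0'$ for a holomorphic $g:\Delta\to G$, and so $\mathcal M_t=g(t)\cdot X_0$. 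Finally, $g(t)$ preserves the base $X^K(v)$ because the base deformation was normalized to be trivial. It therefore lies in the stabilizer $P_{R\setminus R_4}$, and so it genuinely moves $X_0$ within $X$. Proposition \ref{reduced_to_local_deform} then gives homological rigidity.

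The main obstacle is verifying that $\alpha_2\notin L$ makes $\mathcal M_t$ remain a product after base normalization. It is also essential that the Chow-space target $(F_4,\{\alpha_2,\alpha_j\})$ is a case whose rigidity was already proven, rather than one of the exceptional cases (1)--(6). This is what prevents the $A_{2n-2k+1}$-type deformation of the $C_3$-fiber from surviving. The key point to check is that in $(F_4,\{\alpha_2,\alpha_j\})$ the stabilizer and neighbor sets exclude Cases (6) and (1)--(2); this needs care precisely in Case (c), where $\{\alpha_3,\alpha_4\}$ is the subdiagram.
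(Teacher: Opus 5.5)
Your proposal follows the paper's proof essentially step for step. It uses the same Billey--Postnikov decomposition with $K=J\cup\{\alpha_j\}=R\setminus\{\alpha_1\}$, normalizes the induced base deformation to be trivial, and reduces to deformations of $X^J(u)$ inside the Chow space $G/P_{R\setminus\{\alpha_2,\alpha_j\}}$ of the $\alpha_1$-line, where rigidity follows from the long root $\alpha_2$ with fibers of type $A_1\times A_2$. Two small slips should be corrected: since $\alpha_2$ is marked but not in the new subdiagram, the rigidity you need in $(F_4,\{\alpha_2,\alpha_j\})$ is the $\alpha_2\in I\setminus L$ case of Lemma \ref{F4_marked_outside} (not Lemma \ref{F4_no_marked_root_out}, which assumes $I\subset L$), and in Case (c) the $\{\alpha_3,\alpha_4\}$-factor is of type $A_2$, i.e.\ $\mathbb{P}^2$, not $(C_2,\{\alpha_4\})\cong\mathbb{P}^3$.
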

\begin{proof}
    As the proof in the Lemma \ref{type_B_subdiagram_special_rigid}. We prove the homological rigidity for these cases making use of both the statement and the proof of Proposition \ref{prop homology rigidity reduction}. Let $K=R\backslash \{\alpha_1\}$ and take the Billey-Postnikov decomposition $w=vu$ with respect to $(J,K)$. Then in the base level, there is no exceptional pair since $\alpha_1$ is a long root. 
 From the proof of Proposition \ref{prop homology rigidity reduction}, without loss of generality we may assume that the local deformation $\mathcal{M} \subset G/P_J\times \Delta$ induces a trivial deformation of $X^{K}(v)$ in the base level. Shrinking $\Delta$ around the origin, then for each $t\in \Delta$, $\mathcal{M}_t$ is still a product, we may write $\mathcal{M}_t=X^K(v)\times Y_t$ with $\mathcal{M}_0=X^{K}(v)\times X^J(u)$ and $Y_t\subset P_K/P_K\cap P_J$. We define \[ \gamma_t:Y_t\rightarrow \operatorname{Chow}(G/P_J)\]
by $\gamma(y)=[X^{K}(v)\times \{y\}]$ for $y\in Y_t$. From Lemma \ref{stab.subdiagram} and Corollary \ref{subdiagram_rigid_marked_root_outside}, by identifying $X^{K}(v)$ as a Schubert variety of subdiagram type in $X$ (which is homologically rigid), the Chow space of $X^{K}(v)$ in $X=G/P_J$ is $G/P_{R\backslash R_3}$, where $R_3=\{\alpha_2,\alpha_3\} $(Case (a)), $ \{\alpha_2,\alpha_4\}$ (Case (b),(c)). 

Now, $Y_t$ is a local deformation of $Y_0=X^{J}(u)$ in $G/P_{R\backslash R_3}$. By Lemma \ref{F4_marked_outside} below, $X^{J}(u)$ has homological rigidity in $G/P_{R\backslash R_3}$ and there is a holomorphic map $g:\Delta \rightarrow G$ such that $Y_t=g(t)\cdot Y_0$. Hence $\mathcal{M}_t=g(t)\cdot X_0$. Note that we have assume that the deformation at the base level is trivial, $g(t)$ stabilizes $X^{K}(v)\subset G/P_K$ and hence $g(t)\in 
P_{R\backslash \{\alpha_2\}}$. In particular for Case (c), $g(t)$ actually stabilizes $X_0 \subset X=G/P_J$, i.e., $\mathcal{M}$ is also a trivial familiy in this case assuming the induced family at the base level is trivial.
\end{proof}
\begin{lemma}\label{F4_marked_outside}
When $G=F_4$, if $I\backslash L\neq \emptyset$, $X_0=X^{J}(w)\subset X$ is homologically rigid except when we are in the situation (3)-(6) in Theorem \ref{homological_subdiagram}, which are 
 \begin{enumerate}[label=$\bullet$]
        \item$X=(F_4, \{\alpha_1,\alpha_3\}), L=\{\alpha_3\}$; 
        \item $X=(F_4, \{\alpha_1,\alpha_4\}), L=\{\alpha_4\}$ or $\{\alpha_3, \alpha_4\}$; 
        \item $X=(F_4,\{\alpha_1,\alpha_3, \alpha_4\}), L=\{\alpha_3\}$;
     \item $X=(F_4,\{\alpha_3, \alpha_4\}), L=\{\alpha_3\}$ or $\{\alpha_2,\alpha_3\}$.
      \end{enumerate}
\end{lemma}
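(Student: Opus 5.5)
Since $I\setminus L\neq\emptyset$, I would split the argument into a rigid part and a non-rigid part, and organize everything by the marked nodes $I\subset\{\alpha_1,\alpha_2,\alpha_3,\alpha_4\}$ and by which roots of $I$ lie outside $L$.

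\emph{Rigid cases.} The main tool is Proposition \ref{prop homology rigidity reduction}, with the first Billey-Postnikov step chosen so that no exceptional pair appears.
\begin{enumerate}
\item If $I\cap L$ contains no short root, i.e. $\alpha_3,\alpha_4\notin I\cap L$, rigidity follows directly from Corollary \ref{subdiagram_rigid_marked_root_outside}.
\item Otherwise some short marked root lies in $L$. I would first project along a long marked root outside $L$ when one exists, i.e. take $K=R\setminus\{\alpha_1\}$ or $R\setminus\{\alpha_2\}$ according to which of $\alpha_1,\alpha_2$ lies in $I\setminus L$.
\item For $K=R\setminus\{\alpha_2\}$, the fibre diagram is $A_1\times A_2$, so Proposition \ref{prop-exceptional-pair} rules out exceptional pairs in the fibres. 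The base is marked at a long root, so Theorem \ref{HM_homological} applies there.
\item For $K=R\setminus\{\alpha_1\}$, the fibre is of type $(C_3,\,I\cap\{\alpha_2,\alpha_3,\alpha_4\})$. Here Proposition \ref{prop-exceptional-pair} (2) and Lemma \ref{C_n_exceptional} determine exactly when an exceptional pair occurs: when the fibre subdiagram is $\{\alpha_3\}$ or $\{\alpha_3,\alpha_4\}$ marked at $\alpha_3$, or $\{\alpha_4\}$ marked at $\alpha_4$, in the $C_3$ numbering.
\item Whenever these configurations do not occur, Proposition \ref{prop homology rigidity reduction} gives rigidity.
\item When the first choice of $K$ produces an exceptional fibre, I would try the other order of projections. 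For example, with both $\alpha_3,\alpha_4$ marked, project first along whichever short root lies outside $L$, as in Remark \ref{difference_high_picard}, and then check by Proposition \ref{prop-exceptional-pair} that every later fibre is of type $A$, or of type $C$ marked at $\alpha_1$ (covered by Lemma \ref{lem-aut-larger}).
\item If no tower works but the case is not in the list, I would use the product-structure argument of Lemmas \ref{type_B_subdiagram_special_rigid} and \ref{F4_special_case}. Take the rigid factor at the base level to be trivially deformed, compute its Chow space via Lemma \ref{stab.subdiagram}, and reduce to rigidity of the remaining factor inside that Chow space.
\end{enumerate}

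\emph{Non-rigid cases (3)--(6).} I would exhibit an explicit deformation, as in Lemmas \ref{type_B_subdiagram_not_rigid} and \ref{Type_C_subdiagram_not_rigid}.
\begin{enumerate}
\item In each listed case, $X_0$ is a linear space contained in a maximal linear subspace $Y$, namely the one associated with the subdiagram $\{\alpha_2,\alpha_3,\alpha_4\}$ marked at $\alpha_3$ (resp. $\{\alpha_3,\alpha_4\}$ marked at $\alpha_4$). The subdiagram $L$ is such that its neighbour $\alpha_2$ (resp. $\alpha_3$) is not in $L$.
\item $\mathrm{Aut}_0(Y)$ is of type $A$ and strictly larger than its stabilizer in $F_4$, so translating the factor by an element of $\mathrm{Aut}_0(Y)$ not in $C_3\subset F_4$ gives a local deformation not induced by $F_4$.
\item The additional marked root $\alpha_1$ (or $\alpha_4$, resp. $\alpha_3$) in $I\setminus L$ guarantees $\mathrm{Aut}_0(X)=F_4$. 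Hence the deformation is genuinely non-homogeneous, and Lemma \ref{exceptional_triple} yields non-rigidity.
\end{enumerate}

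\emph{Main obstacle.} The hardest step is the borderline configurations: those not on the list where every Billey-Postnikov tower contains an exceptional pair, for instance $I=\{\alpha_1,\alpha_3\}$ with $L=\{\alpha_2,\alpha_3\}$ or $\{\alpha_3,\alpha_4\}$. For these I expect to need the product/Chow-space argument in full. That requires identifying the stabilizer parabolic from Lemma \ref{stab.subdiagram} and checking that the residual group element stabilizes $X_0$, exactly as in the last paragraph of the proof of Lemma \ref{type_B_subdiagram_special_rigid}. There is also a dependency to watch: Lemma \ref{F4_special_case} invokes the present lemma for the fibre factor, so any induction must only call cases with fewer marked roots, and circularity has to be avoided.
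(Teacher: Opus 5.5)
As written, your argument does not go through. The case analysis, which is the whole content of this lemma, contains concrete errors. It also misses the reduction that makes the paper's proof short.

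\emph{The missing reduction.} If $\alpha\in I\setminus L$ and $K=R\setminus\{\alpha\}$, then $X_0$ lies in the fibre $P_K/P_J$ of $\pi\colon G/P_J\to G/P_K$. The numerical argument of Lemma~\ref{pic_num_one} never uses the Picard number of the fibre, so every $Z$ with $[Z]=[X_0]$ also lies in a single fibre. Moreover, any $g\in G$ carrying $X_0$ into that same fibre lies in $P_K$ and acts there through $G_K$. Hence $X_0$ is rigid in $X$ \emph{if and only if} it is rigid in $P_K/P_J$ with respect to $G_K$. The paper simply reads off the answer from the fibres: $C_3$ for $\alpha=\alpha_1$, type $A$ for $\alpha=\alpha_2,\alpha_3$, and $B_3$ for $\alpha=\alpha_4$. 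It uses the already proved type $B$ and $C$ classifications together with Lemma~\ref{lem-aut-larger}. Rigidity and non-rigidity come out simultaneously, and nothing depends on Lemma~\ref{F4_special_case}, so your circularity worry disappears. Proposition~\ref{prop homology rigidity reduction} is only a sufficient criterion, which is why you end up searching for good towers and fallback arguments.

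\emph{Concrete errors (all in $F_4$ numbering).}
(a) In step~4, the subdiagram $\{\alpha_3,\alpha_4\}$ in $(C_3,\{\alpha_3\})\cong IG(2,6)$ is $\mathrm{Gr}(2,V_3)$ for a Lagrangian $V_3$. This lies in no translate of the $\mathbb{P}^3$ given by $\{\alpha_2,\alpha_3\}$, so by Lemma~\ref{C_n_exceptional} it is not exceptional. The exceptional fibre pairs are $\{\alpha_3\}$ in $(C_3,\{\alpha_3\})$, and $\{\alpha_4\}$, $\{\alpha_3,\alpha_4\}$ in $(C_3,\{\alpha_4\})\cong\mathbb{P}^5$. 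Consequently your ``borderline'' examples $I=\{\alpha_1,\alpha_3\}$ with $L=\{\alpha_2,\alpha_3\}$ or $L=\{\alpha_3,\alpha_4\}$ are settled directly by the $\alpha_1$-projection and Lemma~\ref{pic_num_one}. The product/Chow-space argument you reserve for them would not even apply, since $X_0$ has Picard number one there.

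(b) Step~6 is false for $\alpha_4$. Projecting along $\alpha_4$ gives the fibre $B_3$ on $\{\alpha_1,\alpha_2,\alpha_3\}$, with $\alpha_3$ as its short root. So the later fibres are not all of type $A$ or $C$: they include fibres of the form $(B_m,\{\alpha_3\})$, which are exceptional by Lemma~\ref{lem-aut-larger}. This is where cases (5) and (6) come from. It is also where the one genuinely borderline configuration of this lemma lives, namely $I=\{\alpha_1,\alpha_3,\alpha_4\}$, $L=\{\alpha_1,\alpha_3\}$. There every Billey--Postnikov tower contains an exceptional pair, and rigidity comes from Lemma~\ref{type_B_subdiagram_special_rigid} for $B_3$, transported by the fibre reduction. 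Your plan never isolates this case.

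(c) In the non-rigid part, $\{\alpha_2,\alpha_3,\alpha_4\}$ marked at $\alpha_3$ is $IG(2,6)$. It is not a linear space, and its automorphism group is $C_3$, not of type $A$. In case (4), the $\mathbb{P}^2$ of $\{\alpha_3,\alpha_4\}$ already has full automorphism group $A_2$, and it equals $X_0$ when $L=\{\alpha_3,\alpha_4\}$. The correct ambient is $(C_3,\{\alpha_4\})\cong\mathbb{P}^5$, with $A_5\supsetneq C_3$. In case (6), the mechanism is $(B_3,\{\alpha_3\})\cong Q^6$ with $\mathrm{Aut}_0=D_4\supsetneq B_3$, not a type $A$ linear space.
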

\begin{proof}
If a simple root $\alpha\in I \backslash L$, then let $K=R\backslash\{\alpha\}$ and take the Billey-Postnikov decomposition with respect to $(J,K)$, then it suffices to check for the diagram $\Gamma_{R\backslash \{\alpha\}}$ corresponding to the fiber direction of the projection $G/P_J\rightarrow G/P_K$ (as $X^K(v)$ is a single point in this case). We can do it case by case.

 If $\alpha_1\in I\backslash L$, the diagram $\Gamma_{R\backslash \{\alpha_1\}}$ is exactly of  type $C_3$, then the non-rigid cases are either from the non-rigid cases for type  $C_3$(Lemma \ref{Type_C_subdiagram_not_rigid}), or the cases where $P_K/P_J$, the homogeneous subspace associated with $\Gamma_{R\backslash\{\alpha_1\}}(I\backslash \{\alpha_1\})$, has automorphism group larger than $C_3$ (Lemma \ref{lem-aut-larger}). They can be listed as
 \begin{enumerate}
     \item[(i)] $I=\{\alpha_1, \alpha_3\}, L=\{\alpha_3\}$;
     \item[(ii)] $I=\{\alpha_1, \alpha_4\}, L=\{\alpha_4\}$;
     \item[(iii)] $I=\{\alpha_1, \alpha_4\}, L=\{\alpha_3,\alpha_4\}$;
      \item[(iv)] $I=\{\alpha_1, \alpha_3, \alpha_4\}, L=\{\alpha_3\}$.
 \end{enumerate}
More precisely, Cases (i),(iv) are from our discussion on type $C$ cases in Section \ref{Cn_subdiagram} (Lemma \ref{Type_C_subdiagram_not_rigid}), Cases (ii),(iii) are from Lemma \ref{lem-aut-larger} that $P_K/P_J$ has automorphism group $A_5$ which gives deformations of $X_0$ not from $F_4$.

 If $\alpha_2\in I \backslash L$, then the diagram $\Gamma_{R\backslash \{\alpha_2\}}$ is of type $A_1\times A_2$ and every case is homologically rigid;  If $\alpha_3\in I \backslash L$, the diagram $\Gamma_{R\backslash \{\alpha_3\}}$ is of type $A_2\times A_1$, also every case is homologically rigid.
 
 If $\alpha_4\in I \backslash L$, the diagram $\Gamma_{R\backslash \{\alpha_4\}}$ is of type $B_3$,  then the non-rigid cases are either from the non-rigid cases for type $B_3$ (Lemma \ref{type_B_subdiagram_not_rigid}), or the cases where $P_K/P_J$, the homogeneous subspace associated with $\Gamma_{R\backslash\{\alpha_4\}}(I\backslash \{\alpha_4\})$, has automorphism group larger than $B_3$ (Lemma \ref{lem-aut-larger}). They can be listed as
 
 \begin{enumerate}
     \item[(i)] $I=\{\alpha_3, \alpha_4\}, L=\{\alpha_3\}$;
     \item[(ii)] $I=\{\alpha_3, \alpha_4\}, L=\{\alpha_2,\alpha_3\}$;
     \item[(iii)] $I=\{\alpha_1, \alpha_3, \alpha_4\}, L=\{\alpha_3\}$;
 \end{enumerate}
More precisely, Case (iii) is from our discussion on type $B$ cases in Section \ref{Bn_subdiagram} (Lemma \ref{type_B_subdiagram_not_rigid}), Cases (i),(ii) are from Lemma \ref{lem-aut-larger} that $P_K/P_J$ has automorphism group $D_4$ which gives deformations of $X_0$ not from $F_4$. In summary, we obtain Cases (3)-(6) in Theorem \ref{homological_subdiagram} which are non-rigid..
\end{proof}

\section{Schur rigidity in the subdiagram case}\label{Schur_higher_Pic}

In this section, we prove Theorem \ref{Schur_subdiagram} for the Schur rigidity of smooth Schubert varieties in the subdiagram case. We first consider the case when the subdiagram is connected, where the discussion will be divided into two parts (Proposition \ref{thm_with_condition_E}, Proposition \ref{no_condition_E}). In Proposition \ref{thm_with_condition_E}, we give a proof of Theorem \ref{Schur_subdiagram} when the subdiagram is connected, $X_0$ is of Picard number two and $G$ is of type $ADE$. In Proposition \ref{no_condition_E}, we give a proof of Theorem \ref{Schur_subdiagram} when the subdiagram is connected and satisfies certain conditions which always hold if $G$ is not of type $DE$, or $X_0$ is of Picard number at least three.
Then we finish the proof of Theorem \ref{Schur_subdiagram} for the general cases (Proposition \ref{product_case}).

Let $I=J^c \subset R $ be the set of marked roots and $L$ be the set of simple roots in the subdiagram. In this section we assume that \textbf{$I\cap L$ consists of long roots}. For $\alpha\in I$, we denote by $C_{\alpha}$ the set of simple roots in the connected component of the Dynkin subdiagram $\Gamma_{J\cup\{\alpha\}}$ that contains $\alpha$, which corresponds to the central fiber of the projection map $\pi_\alpha: X=G/P_J\rightarrow G/P_{J\cup\{\alpha\}}$ (see Section \ref{Mori_contraction_prelimi}).

\begin{lemma}\label{lem-intersection-roots-Dynkin}
Let $\beta_1,\beta_2$ be two distinct roots in $I$. Then $\Gamma_{C_{\beta_1}\cap C_{\beta_2}}$ is the unique connected component of $\Gamma_R$ connecting both $\beta_1$ and $\beta_2$.
\end{lemma}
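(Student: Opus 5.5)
The plan is to read the statement as saying that $\Gamma_{C_{\beta_1}\cap C_{\beta_2}}$ is the unique connected component of $\Gamma_J$ (the unmarked part, $J=I^c$) that is adjacent to both $\beta_1$ and $\beta_2$, and is empty when no such component exists. The argument uses only that $\Gamma_R$ is a tree.

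\emph{Step 1: structure of $C_\beta$.} For $\beta\in I$, the set $C_\beta$ consists of the nodes of the connected component of $\Gamma_{J\cup\{\beta\}}$ through $\beta$. Removing $\beta$ from this component leaves a union of connected components of $\Gamma_J$. These are exactly the components $\Gamma_{D}$ of $\Gamma_J$ with $\beta\in N(D)$. Hence
\[
C_\beta=\{\beta\}\cup\bigcup_{D\,:\,\beta\in N(D)} D,
\]
where $D$ runs over the node sets of connected components of $\Gamma_J$.

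\emph{Step 2: the intersection.} Since $\beta_1\neq\beta_2$ are both marked, $\beta_2\notin C_{\beta_1}$ and $\beta_1\notin C_{\beta_2}$. By Step 1, $C_{\beta_1}\cap C_{\beta_2}$ is therefore the union of those components $D$ of $\Gamma_J$ with both $\beta_1,\beta_2\in N(D)$. In particular, each connected component of $\Gamma_{C_{\beta_1}\cap C_{\beta_2}}$ is a component of $\Gamma_J$, and $\Gamma_{C_{\beta_1}\cap C_{\beta_2}}$ is a full subdiagram of $\Gamma_R$.

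\emph{Step 3: uniqueness.} Suppose two distinct components $D\neq D'$ of $\Gamma_J$ were both adjacent to $\beta_1$ and to $\beta_2$. Then we would get a closed walk $\beta_1\to D\to\beta_2\to D'\to\beta_1$. It uses paths inside the connected subdiagrams $\Gamma_D,\Gamma_{D'}$, and these are disjoint and non-adjacent to each other. This walk contains a cycle in $\Gamma_R$, which contradicts the fact that Dynkin diagrams of semisimple groups are forests. So at most one such component exists.

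\emph{Step 4: existence.} A component adjacent to both $\beta_1$ and $\beta_2$ exists exactly when $\beta_1,\beta_2$ lie in the same component of $\Gamma_R$ and every interior node of the geodesic between them is unmarked. In that case it is the component of $\Gamma_J$ containing that geodesic interior. It may contain extra branches, for instance the trivalent node in types $D$ and $E$, and this is why the statement refers to a component rather than a path.

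The only delicate point is the bookkeeping in Step 3. One must check that the edges used in the walk do not overlap, so that a genuine cycle appears. For this I would take the unique tree path from $\beta_1$ to $\beta_2$ and observe that it cannot pass through both $D$ and $D'$.
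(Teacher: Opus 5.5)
Your argument is correct and is essentially the paper's. The paper also uses only that $\Gamma_R$ has no loops to get at most one component $\Gamma_Q$ of $\Gamma_J$ adjacent to both $\beta_1,\beta_2$ (with $Q=\emptyset$ allowed), and then shows $Q\subset C_{\beta_1}\cap C_{\beta_2}\subset Q$; your description of $C_\beta$ in Steps 1--2 makes that explicit. One small slip in the optional Step 4: if $\beta_1,\beta_2$ are adjacent, your condition on interior nodes holds vacuously, yet no such component exists (it would close a cycle), so you need to add ``non-adjacent''. This does not affect the lemma, since it allows the empty case.
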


\begin{proof}
Since $\Gamma_R$ contains no loops, there exists at most one connected component $\Gamma_{Q}$ of the subdiagram $\Gamma_J$ connecting both $\beta_1$ and $\beta_2$ in $\Gamma_R$. Write $Q=\emptyset$ when this component does not exist. By the definition of $C_{\beta_1}$ and $C_{\beta_2}$, we have $Q\subset C_{\beta_1}\cap C_{\beta_2}$. The non-existence of loops also implies that $C_{\beta_1}\cap C_{\beta_2} \subset Q$, hence $C_{\beta_1}\cap C_{\beta_2} = Q$.  
\end{proof}

\begin{proposition}\label{prop-intersection-two-roots}
 Let $X_0=G_L/P_{L\cap J}\subset X=G/P_J$ be a smooth Schubert variety associated with the connected subdiagram $\Gamma_{L}(I\cap L)$ of the connected marked Dynkin diagram $\Gamma_R$. Suppose $|I\cap L|\geq 2$ (i.e., $X_0$ is of at least Picard number 2). Then
\begin{eqnarray}\label{eqn-intersection-Dynkin}
\bigcap_{\alpha \in I\cap L} C_{\alpha}\subset L,
\end{eqnarray} 
unless all the following conditions hold:
\begin{itemize}
\item[(i)] $G$ is of type $D$ or $E$, and $|I\cap L|=2$;
\item[(ii)] the unique simple root $\gamma\in R$ with $|N(\gamma)|=3$ satisfies $\gamma\in L\cap J$;
\item[(iii)]  up to reorder of the connected components $\Gamma_{K_1},\Gamma_{K_2},\Gamma_{K_3}$ of $\Gamma_{R\backslash\{\gamma\}}$, we have  $|I\cap L\cap K_1|=|I\cap L\cap K_2|=1$, $I\cap L\cap K_3=\emptyset$, and the single element set $K_3\cap N(\gamma)\subset L\cap J$.
\end{itemize} 
In particular, \eqref{eqn-intersection-Dynkin} holds when $|I\cap L|\geq 3$.
\end{proposition}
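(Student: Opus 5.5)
Throughout, $\Gamma_R$ is a tree, and each $C_\alpha$ is the connected component of $\Gamma_{J\cup\{\alpha\}}$ containing $\alpha$. For $\alpha\neq\beta$ in $I$ we have $\beta\notin C_\alpha$. Hence, by Lemma \ref{lem-intersection-roots-Dynkin}, for two distinct $\beta_1,\beta_2\in I\cap L$ the intersection $C_{\beta_1}\cap C_{\beta_2}$ is the component $Q$ of $\Gamma_J$ that touches both $\beta_1$ and $\beta_2$, or is empty. So $\bigcap_{\alpha\in I\cap L}C_\alpha$ is either empty or a single connected component $Q$ of $\Gamma_J$ adjacent to every $\alpha\in I\cap L$.

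The plan is to assume $\bigcap_{\alpha\in I\cap L}C_\alpha=Q\neq\emptyset$ with $Q\not\subset L$, and derive (i)--(iii). This also gives the last assertion, because (i) forces $|I\cap L|=2$.

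First I would locate a branch point of $\Gamma_R$. Since $L$ is connected and contains all $\alpha\in I\cap L$, the unique path in the tree between any two elements of $I\cap L$ lies in $L$. Since $Q$ is connected and adjacent to each $\alpha\in I\cap L$, the path between two such roots passes through $Q$ (it enters $Q$ from $\beta_1$ and exits to $\beta_2$). Thus $Q\cap L\neq\emptyset$, and $Q\cap L$ contains the convex hull $H\subset Q$ of the neighbours in $Q$ of the roots in $I\cap L$.

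Pick $\delta\in Q\setminus L$. Because $Q$ is connected, there is a path in $Q$ from $H$ to $\delta$, leaving $L$ at some point. If the $\alpha_i$ (with $i$ ranging over $I\cap L$) all lay on one line through $Q$, then $\delta$ would sit on a branch off that line, or beyond it. The "beyond" case is impossible: $Q$ is adjacent to all the $\alpha_i$ and lies between them, so an endpoint beyond them would have to be an $\alpha_i$, which is not in $Q$. So the path from $H$ to $\delta$ must leave through a vertex $\gamma\in H$ with three neighbours: two in the directions of the $\alpha_i$, and a third towards $\delta$. Hence $\Gamma_R$ is not a chain, so $G$ is of type $D$, $E$, or $F_4$/$G_2$ — but the latter two are chains, so $G$ is of type $D$ or $E$. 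Moreover $\gamma$ is the unique trivalent node and $\gamma\in Q\cap L\subset L\cap J$, which gives (ii).

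Next I would bound $|I\cap L|$. The three branches $K_1,K_2,K_3$ at $\gamma$ are the only directions available. Each $\alpha\in I\cap L$ is adjacent to $Q$, so it is the first marked node on its branch, and distinct roots of $I\cap L$ must lie on distinct branches. Otherwise the one further from $\gamma$ would be separated from $Q$ by the nearer one, which lies in $I$ and hence outside $Q$. The branch $K_3$ containing $\delta$ has $Q\cap K_3\not\subset L$. If $K_3$ also contained some $\alpha\in I\cap L$, then by connectivity of $L$ the segment from $\gamma$ to $\alpha$ would lie in $L$. By the argument above, the escaping path from $H$ towards $\delta$ would then have to leave $L$ through a further branch point, and there is none. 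So $I\cap L\cap K_3=\emptyset$, each of $K_1,K_2$ contains exactly one element of $I\cap L$, and $|I\cap L|=2$; this gives (i) and (iii).

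The remaining claim, and the main obstacle, is that the neighbour $\gamma'$ of $\gamma$ in $K_3$ lies in $L\cap J$. It lies in $J$ because $Q\ni\gamma$ meets $K_3$ in the direction of $\delta$. The difficulty is showing $\gamma'\in L$. As stated, connectivity alone would allow $L\cap K_3=\emptyset$ with $\delta=\gamma'$, so $\gamma'\in L$ is not automatic from the setup. I would first try to argue that otherwise $\delta$ could be taken to be $\gamma'$ and the configuration is still exceptional, meaning the condition in (iii) must be read as "$K_3\cap N(\gamma)\subset J$". Then I would check whether the authors' hypotheses (for instance reducedness, or a prolongation convention replacing $L$ by a larger subset as in the Corollary after Lemma \ref{stab.subdiagram}) let one enlarge $L$ to contain $\gamma'$ without changing $X_0$. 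Settling this bookkeeping is where I expect the care to be needed.
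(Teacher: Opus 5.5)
Apart from the final inclusion $\gamma'\in L$, your argument is correct and follows the paper's route. Failure of \eqref{eqn-intersection-Dynkin} gives a component $Q$ of $\Gamma_J$ adjacent to every root of $I\cap L$. The tree path between two such roots lies in $L$ because $L$ is connected. Hence the part of $Q$ outside $L$ must branch off at a trivalent node $\gamma\in L\cap J$, which forces type $D$ or $E$.

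The only difference is how a third root is excluded. The paper argues pairwise with $Q_{i,j}=C_{\beta_i}\cap C_{\beta_j}$: some branch at $\gamma$ is a single node, and removing it leaves a chain containing $Q_{1,2}\cup\{\beta_1,\beta_2\}$. You instead observe that a root of $I\cap L$ on the branch containing $\delta$ would put $Q\cap K_3$ inside $L$. Your version is a bit more direct and gives $I\cap L\cap K_3=\emptyset$ at the same time.

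The obstacle you flagged cannot be overcome, because the inclusion $K_3\cap N(\gamma)\subset L$ is false. Your reading ``$K_3\cap N(\gamma)\subset J$'' is the correct one.

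\begin{itemize}
\item \emph{Counterexample.} The paper's own Example \ref{Exam_of_condition_E} in $D_6$ refutes it. There $L$ consists of $\beta$, $\gamma$, the node between them, and $\alpha$, while the other end node $\gamma'$ adjacent to $\gamma$ lies outside the box. Then $C_\alpha\cap C_\beta$ consists of $\gamma$, $\gamma'$ and the node between $\beta$ and $\gamma$, so it is not contained in $L$. Yet $K_3\cap N(\gamma)=\{\gamma'\}\not\subset L$.
\item \emph{The paper's proof.} It also justifies only the $J$ part. Its remark ``otherwise $\Gamma_{Q_{1,2}}$ is contained in $\Gamma_L$'' rules out only $\gamma'\in I$.
\item \emph{Enlarging $L$.} Your fallback cannot work. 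Here $L$ is connected and meets $I$, hence reduced, and $\gamma'\in N(L)\cap J$. By Lemma \ref{stab.subdiagram}, $U_{-\gamma'}$ does not stabilize $X_0$, so $L\cup\{\gamma'\}$ defines a strictly larger Schubert variety. The printed definition of $L_{\rm pro}$, with $\cap$ where $\cup$ is meant, may suggest otherwise, but it is a typo; the intended definition matches the stabilizer in Lemma \ref{stab.subdiagram}.
\end{itemize}

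So drop that step and state (iii) with $J$ in place of $L\cap J$. What you actually prove, namely $\gamma'\in J$ together with $\delta\in (Q\cap K_3)\setminus L$, is the sharp form. Nothing downstream is affected, since the proof of Theorem \ref{Schur_subdiagram} only uses (i).
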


\begin{proof}
Suppose \eqref{eqn-intersection-Dynkin} fails. For any two distinct roots $\beta_1, \beta_2\in I\cap L$, we have $C_{\beta_1}\cap C_{\beta_2}\not\subset L$. By Lemma \ref{lem-intersection-roots-Dynkin}, $\Gamma_{Q_{1, 2}}$ is the unique connected component of $\Gamma_J$ connecting both $\beta_1$ and $\beta_2$, where $Q_{1, 2}:=C_{\beta_1}\cap C_{\beta_2}$. Now we set $S_{1, 2}:=Q_{1, 2}\cup\{\beta_1, \beta_2\}$. We can conclude that $\Gamma_{S_{1, 2}}$ is of type $D$ or $E$, since in the case of other types of connected Dynkin diagrams, the fact $\beta_1,\beta_2\in L$ would imply that the path $\Gamma_{Q_{1, 2}}$ connecting them is also contained in $\Gamma_L$. Furthermore, $G$ is of type $D$ or $E$, and $\gamma\in Q_{1, 2}\subset L\cap J$. Up to the reorder of $K_1, K_2$ and $K_3$, we have $\beta_i\in I\cap L\cap K_i$ for $i=1,2$. 
Besides, the single element set $K_3\cap N(\gamma)\subset L\cap J$ since otherwise $\Gamma_{Q_{1, 2}}$ is contained in $\Gamma_L$.

It remains to show $|I\cap L|=2$, since in this case conclusions (i) (ii) and (iii) follow immediately. Suppose $|I\cap L|\geq 3$ and take $\beta_3\in I\cap L$ different from $\beta_1$ and $\beta_2$. Then the argument above implies that for all $1\leq i<j\leq 3$, the root $\gamma\in Q_{i, j}\subset L\cap J$ and the Dynkin diagram $\Gamma_{S_{i, j}}$ is of type $D$ or $E$, where $Q_{i, j}:=C_{\beta_i}\cap C_{\beta_j}$ and $S_{i, j}:=Q_{i, j}\cup\{\beta_i, \beta_j\}$. 

As a consequence, for each $k=1,2,3$, the set $K_k\cap\{\beta_1,\beta_2,\beta_3\}$ has at most one element. Since we have fixed the subscript of $K_k$ such that $\beta_1\in K_1$ and $\beta_2\in K_2$, it must hold that $\beta_3\in K_3$. 

By the characteristic of Dynkin diagrams, there is at least one connected component $\Gamma_{K_k}$ of $\Gamma_{R\setminus\{\gamma\}}$ that consists of a unique node. Without loss of generality, we may assume that $\Gamma_{K_3}$ is such a component. Then $K_3=\{\beta_3\}\subset I$, and the Dynkin diagram $\Gamma_{R\setminus K_3}$ is of type $A$. As a subdiagram of $\Gamma_{R\setminus K_3}$, we know that $\Gamma_{S_{1, 2}}$ is also of type $A$. It is a contradiction, and thus $|I\cap L|=2$.
\end{proof}



\begin{example}\label{Exam_of_condition_E}

  When $G$ is $D_6$, $\Gamma_{L}(I\cap L)$ is given in the box in the following Dynkin diagram, $\gamma \in L\cap J, I\cap L=\{\alpha,\beta\}$, $|N(\gamma) |=3$ and it is easy to check that it satisfies all conditions listed in Proposition \ref{prop-intersection-two-roots} and $C_\beta\cap C_\alpha \not \subset L$.
    \begin{center}
     \begin{tikzpicture}[scale=0.5]
  \draw[thick] (0,0) -- (4.5, 0)  ; 
  \draw[thick] (4.5,0) -- (5.5, 1)  ;
  \draw[thick] (4.5,0) -- (5.5, -1)  ;
\draw[ thick, fill=white] (0,0) circle (3pt) node[above, outer sep=3pt]{};
		\draw[ thick, fill=black] (1.5,0) circle (3pt) node[above, outer sep=3pt]{$\beta$};
        \draw[ thick, fill=white] (3,0) circle (3pt) node[above, outer sep=3pt]{};
        \draw[ thick, fill=white] (4.5,0) circle (3pt) node[above, outer sep=3pt]{$\gamma$};
         \draw[ thick, fill=white] (5.5,1) circle (3pt) node[above, outer sep=3pt]{};
        \draw[ thick, fill=black] (5.5,-1) circle (3pt) node[above, outer sep=3pt]{$\alpha$};
         \draw[thick] (1.1,0.6) -- (4.5,0.6)  ;
           \draw[thick] (1.1,-0.6) -- (4.6,-0.6)  ;
           \draw[thick] (1.1,0.6) -- (1.1,-0.6);
            \draw[thick] (4.5,-0.6) -- (5.5,-1.6);
            \draw[thick] (5.5,-1.6) -- (6.2,-1);
                 \draw[thick] (4.6,0.6) -- (6.2,-1);
	\end{tikzpicture} 	
     \end{center}
\end{example}

Now we start to prove Theorem \ref{Schur_subdiagram} when $\Gamma_L(I\cap L)$ is connected. We first deal with the case when $|I\cap L|=1$ (Lemma \ref{schur_pic_num_one}), and then the case when $|I\cap L|=2$ and $G$ is of type $ADE$ (Proposition \ref{thm_with_condition_E}). Finally, we deal with the case when $|I\cap L|\geq 2$ and \eqref{eqn-intersection-Dynkin} holds (Proposition \ref{no_condition_E}). By Proposition \ref{prop-intersection-two-roots}, these covers all possibilities.
Before the discussion of these three cases, let us make some preparations.

\begin{lemma}\label{lem-fiber-bundle-Schubert}
Let $X_0=G_L/(G_L\cap P_{J})$ be a Schubert variety of subdiagram type on the rational homogeneous space $X=G/P_{J}$. Then $X_0$ is a fiber bundle over the projective space if and only if there exists $\alpha\in I\cap L$ such that $G_L/(G_L\cap P_{R\backslash\{\alpha\}})$ is isomorphic to a projective space.
\end{lemma}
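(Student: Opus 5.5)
For the "if" direction I will use the Mori contraction of $X_0$ itself. Suppose $\alpha\in I\cap L$ is such that $G_L/(G_L\cap P_{R\backslash\{\alpha\}})\cong\mathbb{P}^m$. The natural projection $G/P_J\to G/P_{R\backslash\{\alpha\}}$ restricts to the homogeneous fibration
\[
X_0=G_L/(G_L\cap P_J)\longrightarrow G_L/(G_L\cap P_{R\backslash\{\alpha\}}).
\]
This map is $G_L$-equivariant. Its fibers are all isomorphic to $(G_L\cap P_{R\backslash\{\alpha\}})/(G_L\cap P_J)$, and it is locally trivial because $G_L\to G_L/(G_L\cap P_{R\backslash\{\alpha\}})$ has local sections. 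So $X_0$ is a fiber bundle over $\mathbb{P}^m$. If $I\cap L=\{\alpha\}$, the map is the identity and $X_0\cong\mathbb{P}^m$ itself; this is the degenerate case of a bundle with point fiber.

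For the "only if" direction, suppose $f\colon X_0\to\mathbb{P}^m$ is a fiber bundle, with $m\ge 1$ (the case $m=0$ is excluded by the convention). Take $H=f^*\mathcal{O}(1)$. It is globally generated and not ample unless $f$ is finite; in any case it is a nonzero nef class on $X_0$. Since $X_0$ is rational homogeneous under $G_L$, its nef cone is simplicial, spanned by the pullbacks of the ample generators along the elementary contractions. Equivalently, every morphism from $X_0$ with connected fibers onto a normal variety is, up to isomorphism of the target, one of the projections
\[
G_L/(G_L\cap P_J)\longrightarrow G_L/(G_L\cap P_{J\cup A}),\qquad A\subset I\cap L.
\]
Take the Stein factorization of $f$. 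Because $f$ is a fiber bundle with connected fibers (if the fibers are disconnected, replace $\mathbb{P}^m$ by the finite étale cover, which is trivial because $\mathbb{P}^m$ is simply connected), we get $\mathbb{P}^m\cong G_L/(G_L\cap P_{J\cup A})$ for some $A$.

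It then remains to shrink $A$ to a single root. The target $G_L/(G_L\cap P_{J\cup A})$ has Picard number $|A\cap(I\cap L)|$ after reducing, and $\mathbb{P}^m$ has Picard number one. So exactly one marked root of $L$ survives, i.e. $(I\cap L)\setminus A=\{\alpha\}$. Then $J\cup A\supset (R\setminus\{\alpha\})\cap L$ up to unmarked roots, so $G_L/(G_L\cap P_{J\cup A})=G_L/(G_L\cap P_{R\backslash\{\alpha\}})\cong\mathbb{P}^m$, which is the required $\alpha$.

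The main obstacle is the classification of contractions of $X_0$. I will justify it using the facts that $\mathrm{Pic}(X_0)$ is freely generated by the classes $D_\beta$, $\beta\in I\cap L$; that the nef cone is the orthant they span; and that each face corresponds to a projection as in Section~\ref{Mori_contraction_prelimi}. One point needs care: a fibration with connected fibers is determined by its face of the nef cone. I also have to check that the abstract identification of the base with $\mathbb{P}^m$ agrees with the homogeneous one, but the fibration $f$ corresponds to the face itself, so no automorphism issue arises.
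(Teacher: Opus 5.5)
Your argument is correct and is essentially the paper's proof: the paper likewise identifies the contractions of $X_0$ with the natural projections $G_L/(G_L\cap P_J)\to G_L/(G_L\cap P_K)$, $L\cap J\subset K\subset L$, and notes that the target has Picard number $|L\setminus K|$, so a projective-space base forces $K=L\setminus\{\alpha\}$. Your Stein-factorization and simple-connectivity step only adds the justification that an arbitrary bundle map may be taken to have connected fibers. One small slip: the Picard number of $G_L/(G_L\cap P_{J\cup A})$ is $|(I\cap L)\setminus A|$, not $|A\cap(I\cap L)|$, and the former is in fact what you use in the next sentence.
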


\begin{proof}
The set of Mori contractions on $X_0$ coincides with the set of natural projetions $X_0=G_L/(G_L\cap P_{J})\rightarrow G_L/(G_L\cap P_K)$ with $L\cap J\subset K\subset L$. The Picard number of $G_L/(G_L\cap P_K)$ is $|L\backslash K|$. Then the conclusion follows.
\end{proof}

\begin{lemma}\label{rigidity_from_big_to_small}
    Let $X_1=G_1/P_1\subset X_0=G_0/P_0\subset X=G/P_J$ be smooth Schubert varieties associated with Dynkin subdiagrams $\Gamma_{L}\subset \Gamma_0 \subset\Gamma(I)$ and the marked roots in $I\cap L$ are all long roots. If $X_1$ is Schur rigid (resp. homologically rigid) in $X$ , then $X_1$ is Schur rigid (resp. homologically rigid) in $X_0$.
\end{lemma}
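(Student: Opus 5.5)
The plan is to argue by contraposition. A bad cycle in $X_0$ should push forward, via the inclusion $X_0\subset X$, to a bad cycle in $X$. The subtle point is that the cycle in $X_0$ is a combination of translates of $X_1$ by $\mathrm{Aut}_0(X)$ acting on $X$, and this has to be compared with translates by $\mathrm{Aut}_0(X_0)$.

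Let $Z\subset X_0$ be an effective cycle with $[Z]=r[X_1]$ in $H_*(X_0)$; take $r=1$ and $Z$ irreducible for the homological statement. Let $\iota:X_0\hookrightarrow X$ be the inclusion. Then $\iota_*[Z]=r\,\iota_*[X_1]=r[X_1]$ in $H_*(X)$, so Schur rigidity (resp. homological rigidity) of $X_1$ in $X$ gives
\[
Z=g_1\cdot X_1+\cdots+g_r\cdot X_1,\qquad g_i\in \mathrm{Aut}_0(X).
\]
The remaining task is to show that each component $g_i\cdot X_1$, which lies inside $X_0$, equals $h_i\cdot X_1$ for some $h_i\in \mathrm{Aut}_0(X_0)$. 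Indeed $G_0\subset\mathrm{Aut}_0(X_0)$, since $G_0$ is the semisimple part of the Levi $L_{\Gamma_0}$ acting on $X_0=G_0/P_0$.

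\textbf{Step 1 (the main obstacle): translates of $X_1$ inside $X_0$ are $G_0$-translates.} Since all marked roots of $I\cap L$ are long, $\mathrm{Aut}_0(X)=G$ apart from the exceptional isomorphisms of Lemma \ref{lem-aut-larger}. In those exceptional cases $X$ has Picard number one with a short marked root; I would handle them separately or observe that the relevant marked root lies in $L$ and is then long, so they do not arise. So assume $g_i\in G$. The set $\{g\in G: g\cdot X_1\subset X_0\}$ is closed and stable under left multiplication by the stabilizer $\mathrm{Stab}(X_0)$. By Lemma \ref{stab.subdiagram}, $\mathrm{Stab}(X_0)$ is the parabolic $P_{(N(\Gamma_0)\cup I\setminus\Gamma_0)^c}$, and likewise $\mathrm{Stab}(X_1)$ is a parabolic $Q_1$. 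Consider the Chow component $G/Q_1$ of translates of $X_1$ (Proposition \ref{Schubert closed}). The locus $\Sigma$ of translates contained in $X_0$ is a closed, $\mathrm{Stab}(X_0)$-stable, hence $B$-stable, subvariety of $G/Q_1$. Each irreducible component of $\Sigma$ is therefore a Schubert variety in $G/Q_1$, with $B$-fixed point $[X_1]$.

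I would then show $\Sigma=G_0\cdot[X_1]\cong G_0/(G_0\cap Q_1)$. The idea is to show that every $T$-fixed point $w\cdot[X_1]$ of $\Sigma$ has $w\cdot X_1\subset X_0$, which forces $w$ to act through the Weyl group of $\Gamma_0$, modulo elements fixing $X_1$. This can be checked at the level of tangent spaces at $T$-fixed points, where $T$-weights are roots in the span of $\Gamma_0$ versus roots outside it, as in the proof of Lemma \ref{stab.subdiagram}. A Bruhat-cell argument then shows that $\Sigma$ is the $G_0$-orbit. Here I would use that a subdiagram Schubert variety is determined by its tangent weights. This weight bookkeeping is where I expect the real work to lie.

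\textbf{Step 2: conclusion.} Given Step 1, write $g_i\cdot X_1=h_i\cdot X_1$ with $h_i\in G_0\subset\mathrm{Aut}_0(X_0)$. This shows $Z=\sum_i h_i\cdot X_1$ in $X_0$, which is Schur (resp. homological) rigidity of $X_1$ in $X_0$. Both versions follow from the same argument, since the homological case is simply $r=1$.
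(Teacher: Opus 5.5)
Your skeleton coincides with the paper's. You push $Z$ forward to $X$, use rigidity there to write $Z=\sum_i g_i\cdot X_1$, and then show that each component $g_i\cdot X_1\subset X_0$ is a $G_0$-translate of $X_1$. Your handling of $\mathrm{Aut}_0(X)\neq G$ is also fine: for the spaces in Lemma \ref{lem-aut-larger} the unique marked root is short, so either it lies outside $L$ and $X_1$ is a point, or it lies in $L$, contrary to hypothesis.

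The gap is Step 1. It is the only non-formal part of the lemma, and you leave it as a program. Writing $\Gamma_0=\Gamma_{L_0}$, your set-up is sound. $\Sigma$ is closed and $B$-stable, and $G_0\cdot[X_1]=P_{L_0}\cdot[X_1]$ is itself a Schubert variety of $G/Q_1$. So it would suffice to show that every $T$-fixed point $w\cdot[X_1]\in\Sigma$ lies in $G_0\cdot[X_1]$. But the decisive claim is only asserted: that $w\cdot X_1\subset X_0$ forces $w\in W_{L_0}W_{Q_1}$, with $W_{Q_1}$ the Weyl group of the Levi of $Q_1$. After moving $w\cdot o$ back to $o$ by an element of $W_{L_0}$, the tangent-weight comparison at $o$ gives only this: some $w\in W_J$ sends the roots of $G_L$ that are not roots of the Levi of $P_J$ into roots of $G_0$. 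Turning that into the required coset statement needs an actual Weyl-group argument, which you do not supply. The principle that a subdiagram Schubert variety is determined by its tangent weights is also invoked without proof. As written, the proposal does not establish the lemma.

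The missing idea is that no such analysis is needed. The statement you want in Step 1 is a special case of homological rigidity of $X_1$ inside $X_0$, which the paper already has.
\begin{enumerate}
\item The $B$-orbits in $X_0=G_0/P_0$ coincide with its $(B\cap G_0)$-orbits. So the Schubert varieties of $X_0$ are exactly the Schubert varieties of $X$ contained in $X_0$, and $H_*(X_0,\mathbb{Z})\to H_*(X,\mathbb{Z})$ is injective.
\item Hence each $g_i\cdot X_1$, which lies in $X_0$ and is homologous to $X_1$ in $X$, is already homologous to $X_1$ in $X_0$.
\item $X_1$ is of subdiagram type in the rational homogeneous space $X_0$, and its marked roots $I\cap L$ are long (and remain long in $G_0$). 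So Corollary \ref{subdiagram_rigid_marked_root_outside} applied to the pair $(X_1,X_0)$ gives $h_i\in G_0$ with $g_i\cdot X_1=h_i\cdot X_1$.
\end{enumerate}
This is exactly the paper's proof. It also shows that the homological half of the lemma needs no hypothesis on $X_1$ in $X$ at all.
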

\begin{proof}
 Since $I\cap L$ consists of long roots, from Lemma \ref{lem-aut-larger}, $G=\mathrm{Aut}_0(X)$ and $G_0=\mathrm{Aut}_0(X_0)$ in any case. Suppose that $X_1$ is Schur rigid in $X$ and $Z\subset X_0$ is a subvariety with the homology class $[Z]=r[X_1]$, then $Z=g_1\cdot X_1+\cdots+g_r\cdot X_1$ for some $g_i\in G$. It suffices to show that for each $1\leq i\leq r$, there exists $g'_i\in G_0$ such that $g_i\cdot X_1=g'_i\cdot X_1$. Note that for each $1\leq i\leq r$, $g_i\cdot X_1$ is homologous to $X_1$ in $X_0$ (since $H_*(X_0,\mathbb{Z})\rightarrow H_*(X,\mathbb{Z})$ is an inclusion), then it suffices to prove the homological rigidity of $X_1$ in $X_0$. Now the lemma is immediate from Corollary \ref{subdiagram_rigid_marked_root_outside}, as $I\cap L$ consists of long roots. 
\end{proof}

\begin{lemma}\label{schur_pic_num_one}
Assume that the subdigram $\Gamma_L$ is connected, $I\cap J$ consists of long roots, $|I\cap L|=1$ and $X_0=G_L/(G_L\cap P_{J})$ is not isomorphic to a projective space. Then the Schubert variety $X_0=G_L/(G_L\cap P_{J})$ on $X=G/P_{J}$ has Schur rigidity.
\end{lemma}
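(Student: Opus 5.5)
The plan is to reduce to the Picard number one case via the Mori contraction that collapses $X_0$ to a point, and then apply Theorem \ref{Schur_HM} inside the central fiber. Write $I\cap L=\{\alpha\}$. Since $\Gamma_L$ is connected and contains only the marked root $\alpha$, we have $L\subset C_\alpha$. Hence $X_0$ is contained in the central fiber $X^{\alpha}:=P_{J\cup\{\alpha\}}/P_J=G_{C_\alpha}/(G_{C_\alpha}\cap P_J)$ of $\pi_\alpha: X\to G/P_{J\cup\{\alpha\}}$. This fiber is a rational homogeneous space of Picard number one, marked at the long root $\alpha$.

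First, take a subvariety $Z\subset X$ with $[Z]=r[X_0]$. I will show that $Z$ lies in a single fiber of $\pi_\alpha$, using the same numerical argument as in Lemma \ref{pic_num_one}.
\begin{itemize}
\item Since $X_0$ lies in a fiber, $\pi_\alpha^*D\cdot X_0=0$ for every very ample divisor $D$ on $G/P_{J\cup\{\alpha\}}$, and therefore $\pi_\alpha^*D\cdot Z=0$.
\item Since $\pi_\alpha^*D$ is base point free, it follows that $\pi_\alpha(Z)$ is a point.
\item This argument uses only that $Z$ is irreducible. If $Z$ is an effective cycle, apply it to each component separately: each component's class is effective, and the intersection number is additive with nonnegative terms, so every term vanishes.
\end{itemize}
After translating by an element of $G$, we may therefore assume each component of $Z$ lies in $X^\alpha$. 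Note, however, that different components may lie in different fibers a priori.

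Second, I will compare homology classes in the fiber with those in $X$. The inclusion $X^\alpha\hookrightarrow X$ is the inclusion of a Schubert variety, so it sends Schubert classes injectively to Schubert classes. Hence $H_*(X^\alpha,\mathbb{Z})\to H_*(X,\mathbb{Z})$ is injective, and $[Z]=r[X_0]$ holds in $X^\alpha$ as well. I will handle the components fiber by fiber: group the components by fiber, write each group's class in $X$ as a nonnegative combination of Schubert classes, and note that the groups' classes sum to $r[X_0]$. Each group then has class $r_i[X_0]$, since effective classes in $X$ are nonnegative combinations of Schubert classes and $[X_0]$ is a basis element.

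Third, I will apply Theorem \ref{Schur_HM} to $X_0\subset X^\alpha$. This requires $X_0$ to be a non-linear smooth Schubert variety of $X^\alpha$. Smoothness is clear, since $X_0$ is homogeneous. I expect non-linearity to be the main point to check. A subdiagram-type $X_0$ of Picard number one is homogeneous, and a homogeneous Picard-number-one space that is linear in the minimal embedding of $X^\alpha$ is a projective space. Marking at a long root means the minimal embedding of $X^\alpha$ restricts to the minimal embedding of $G_L/(G_L\cap P_J)$. So non-linearity follows from the hypothesis that $X_0\not\cong\mathbb{P}^m$; this is where the long-root hypothesis is used.

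The conclusion is that $Z$ is a sum of $G_{C_\alpha}$-translates of $X_0$, and hence of $G$-translates. The group $\operatorname{Aut}_0(X^\alpha)$ might be larger than $G_{C_\alpha}$. This can only happen in the cases of Lemma \ref{lem-aut-larger}, and those are marked at short roots or at $\alpha_1$ of $C_n$, where the marked root is long. The $C_n$ case occurs only when $X^\alpha\cong\mathbb{P}^{2n-1}$, and then the subvariety $X_0$ of that projective space is linear, which is excluded. So every translate produced by Theorem \ref{Schur_HM} comes from $G_{C_\alpha}\subset G$.
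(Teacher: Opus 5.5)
Your proposal is correct and follows the paper's own route. You place $Z$ in a single fiber of $\pi_\alpha$ by the numerical argument of Lemma \ref{pic_num_one}, apply Theorem \ref{Schur_HM} to $X_0$ in the Picard-number-one fiber $X^\alpha$, and use that $\alpha$ is long to ensure the translates come from $G$. Two small slips: $\alpha_1$ of $C_n$ is a short root, so that case of Lemma \ref{lem-aut-larger} is excluded outright by the hypothesis; and non-linearity of $X_0$ follows directly from $X_0\not\cong\mathbb{P}^m$, without any use of root lengths.
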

\begin{proof}
  Write $I\cap L=\{\alpha\}$. Since $X_0=G_L/(G_L\cap P_{J})$ is of Picard number one, we have $L\subset C_\alpha$. By the assumption that $X_0$ is not isomorphic to a projective space, we know  $X_0\subset  G_{C_\alpha}/P_{C_\alpha\backslash\{\alpha\}}$ is Schur rigid, from Theorem \ref{Schur_HM}. Note that $\alpha$ is a long root and no exceptional pair appears, the rest can be deduced easily by repeating the proof of Lemma \ref{pic_num_one}.
\end{proof}

\begin{lemma}\label{intersection_max}
Let $G$ be of type $ADE$, and let
$X_1=G_{L_1}/P_{R\backslash I\cap L_1}, X_2=G_{L_2}/P_{R\backslash I\cap L_2}\subset X=G/P_J$ be smooth Schubert varieties associated with the subdiagrams $\Gamma_{L_1}(I\cap L_1), \Gamma_{L_2}(I\cap L_2)$ of the marked Dynkin diagram $\Gamma_R(I)$ respectively, and $L_1, L_2$ has non-trivial intersection. Then for $g_1,g_2 \in G$,
\begin{eqnarray}\label{eqn-intersection-dimension}
\dim(g_1\cdot X_1 \cap g_2 \cdot X_2 )\leq \dim(X_3)
\end{eqnarray}
where $X_3=G_{L_1\cap L_2}/G_{L_1\cap L_2}\cap P_{R\backslash I\cap L_1\cap L_2}$. Moreover the equality holds if and only if there exists $g_3\in G$ such that $g_1\cdot X_1\cap g_2\cdot X_2=g_3\cdot X_3.$
\end{lemma}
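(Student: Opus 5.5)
We reduce to the case $g_1=\mathrm{id}$ and then study the intersection through the Bruhat decomposition. Replacing $(g_1,g_2)$ by $(\mathrm{id},g_1^{-1}g_2)$, we may assume $g_1=e$ and write $g=g_2$. By Lemma \ref{stab.subdiagram}, $X_1$ is stabilized by the parabolic subgroup $Q_1:=P_{(N(L_1)\cup I\backslash L_1)^c}\supset B$, and $X_2$ is stabilized by the analogous $Q_2\supset B$. Using the Bruhat decomposition $G=\bigsqcup_{w} Q_1 w Q_2$, we write $g=q_1 \dot w q_2$ and replace the intersection by its translate $X_1\cap \dot w\cdot X_2$. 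This is a $T$-stable closed subvariety, which reduces the problem to finitely many Weyl group representatives $w$. It is convenient to use the description of $X_1$ as the $G_{L_1}$-orbit of the base point $o$: its $T$-fixed points are $x_u$ with $u\in W_{L_1}$, and similarly for $X_2$.

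For the dimension bound we use the following argument, which exploits that we are in type $ADE$. The intersection $Z:=X_1\cap \dot w X_2$ is a closed $T$-stable subvariety. Its dimension equals the dimension of the tangent cone at some $T$-fixed point of a top-dimensional component, and since $Z$ is $T$-stable we can choose a $B'$-fixed point for a suitable Borel $B'\supset T$. At a $T$-fixed point $x_u\in Z$, the tangent space $T_{x_u}Z$ is contained in $T_{x_u}X_1\cap T_{x_u}(\dot wX_2)$, which is a sum of $T$-weight spaces. The weights of $T_{x_u}X_1$ are $\{u(-\beta)\}$ with $\beta$ a positive root of $G_{L_1}$ not in $\fp_J$, that is, roots in $\Z L_1$ with a positive $I$-component. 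The weights of $T_{x_u}(\dot w X_2)$ are the analogous roots $u(-\beta')$ with $\beta'$ ranging over $(u^{-1}w)\cdot(\Z L_2$-roots$)$.

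The main step is to show that the common weights number at most $\dim X_3$. The common weights are the roots lying in the intersection of the two root subsystems $\Sigma_{L_1}$ and $v\Sigma_{L_2}$, where $v=u^{-1}w$ can be taken in the parabolic Weyl group $W_{L_1}$, subject to the same tangency condition. The key input is that in a simply-laced root system the intersection of a standard parabolic subsystem with a $W$-conjugate of another one is again a parabolic subsystem. This holds in general, and we would prove it via the minimal double coset representative $v\in W_{L_1}\backslash W/W_{L_2}$, using Kilmoyer's theorem $W_{L_1}\cap vW_{L_2}v^{-1}=W_{L_1\cap vL_2}$. For such $v$ the intersection is $\Sigma_{L_1\cap vL_2}$ with $L_1\cap vL_2$ a set of simple roots. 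We then count the roots in it that have a positive $I$-component and are negative of the required form; this gives $\dim G_{L_1\cap vL_2}/(\ldots)$. We must then show that this is at most $\dim X_3$, with equality only if $v L_2\cap L_1 = L_1\cap L_2$ in the appropriate sense. I expect this comparison to be the main obstacle, because $L_1\cap vL_2$ can differ from $L_1\cap L_2$. We would handle it by showing that $v$ maps simple roots of $L_2$ to simple roots, so the resulting subsystem is conjugate to a subsystem of $\Sigma_{L_1\cap L_2}$ inside $W$. Simply-lacedness is used here: it makes roots of equal length and therefore prevents short-root subsystems from enlarging the count, which is exactly what fails in the $B$, $C$, $F$ exceptions.

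For the equality statement, suppose $\dim Z=\dim X_3$. We take a top-dimensional component $Z'$ and a $T$-fixed point $x$ on it. By the count above, $T_xZ'$ equals the tangent space of the smooth homogeneous orbit $g_3\cdot X_3:=u\,G_{L_1\cap vL_2}\cdot o$, which is contained in both $X_1$ and $\dot wX_2$. Since $g_3X_3$ is irreducible of the same dimension and lies in $Z$, it coincides with $Z'$. It remains to exclude other components and show that $Z$ is irreducible. We would do this by observing that every top-dimensional component is such a translate, and that the count at the common point $x_u$ forces $T_{x_u}Z$ to equal this tangent space, so $Z$ is smooth there of dimension $\dim X_3$. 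Finally, we use the connectedness of intersections of Schubert-type $T$-stable varieties, or equivalently the fact that $Z$ is an orbit of the connected group $G_{L_1}\cap \dot w G_{L_2}\dot w^{-1}\supset G_{L_1\cap vL_2}$, to conclude $Z=g_3\cdot X_3$.
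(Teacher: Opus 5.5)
Your reduction to $Z=X_1\cap\dot wX_2$ is fine, and so is bounding $\dim Z$ by $\dim(T_xX_1\cap T_x\dot wX_2)$ at a $T$-fixed point $x$ of each component. The problem is the weight count that is supposed to finish the inequality: it is set up incorrectly and never carried out.

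Suppose $x=\dot u\cdot o$ with $u\in W_{L_1}$ lies in $\dot wX_2$. Then $w^{-1}u=u'j$ with $u'\in W_{L_2}$ and $j\in W_J$. The common weights are $u(-\gamma)$ for $\gamma\in\Sigma_{L_1}\cap j^{-1}\Sigma_{L_2}\cap(\Sigma^+\setminus\Sigma_J)$. So the twisting element $u^{-1}w=j^{-1}u'^{-1}$ lies in $W_JW_{L_2}$, not in $W_{L_1}$. What has to be proved is $|\Sigma_{L_1}\cap j^{-1}\Sigma_{L_2}\cap(\Sigma^+\setminus\Sigma_J)|\le|\Sigma^+_{L_1\cap L_2}\setminus\Sigma_J|$ for all $j\in W_J$.

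Passing via Kilmoyer to the minimal representative $v$ of $W_{L_1}j^{-1}W_{L_2}$ only determines $\Sigma_{L_1}\cap j^{-1}\Sigma_{L_2}$ up to $W_{L_1}$-conjugacy. Since $W_{L_1}$ does not preserve $\Sigma^+_{L_1}\setminus\Sigma_J$, counting roots of $\Sigma_{L_1\cap vL_2}$ with positive $I$-component does not compute the tangent dimension.

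Your proposed way around the ``main obstacle'' is also false. Take $A_3$ with $L_1=\{\alpha_1,\alpha_2\}$, $L_2=\{\alpha_2,\alpha_3\}$, and $v=s_3s_2s_1$. Then $v$ sends $\alpha_2\mapsto\alpha_1$ and $\alpha_3\mapsto\alpha_2$, is minimal in its double coset, and maps simple roots of $L_2$ to simple roots. Yet $L_1\cap vL_2=L_1$ has rank two while $L_1\cap L_2$ has rank one. Any valid bound must use the constraint $j\in W_J$ together with $I$-positivity, which the proposal never exploits. Simple-lacedness cannot be the mechanism either, since the inequality holds in every type.

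The equality part inherits these problems. The orbit $u\,G_{L_1\cap vL_2}\cdot o$ is a translate of $X_3$ only if the unresolved comparison holds, and its containment in $\dot wX_2$ is not checked. The claims that $Z$ is connected, or is a single orbit of $G_{L_1}\cap\dot wG_{L_2}\dot w^{-1}$, are asserted without argument; they are essentially what must be shown.

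The missing idea is the semicontinuity argument used in the paper. Let $P_{A_2}$ be the stabilizer of $X_2$ and $q$ the base point of $G/P_{A_2}$. The restriction $\phi:\pi_1^{-1}(X_1)\to G/P_{A_2}$ of the incidence projection is $B$-equivariant, with $\phi^{-1}(gq)\cong X_1\cap gX_2$. Hence the jump loci $M_k=\{y:\dim\phi^{-1}(y)\ge k\}$ are closed and $B$-stable, so they contain $q$ whenever nonempty. This gives $\dim(X_1\cap gX_2)\le\dim(X_1\cap X_2)=\dim X_3$ in any type, with no root combinatorics.

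For equality, the paper moves $X_3=X_1\cap X_2$ along curves in $G$ to the fibres over $M_d$, where $d=\dim X_3$. It then uses homological rigidity of $X_3$ (Theorem \ref{main_thm_simple_ver}, the only place type $ADE$ enters) to see that nearby fibres are translates of $X_3$. It concludes because the locus of such fibres is closed and contains a neighbourhood of $q$ in each component of $M_d$, all of which pass through $q$.
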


\begin{proof}
   Without loss of generality, we may assume that $g_1=e$. Let $P_{A_2}$ be the stabilizer of $X_2$, we have the following projections.
   \begin{eqnarray*}
\xymatrix{& G/P_{J\cap A_2}\ar[ld]_{\pi_1}\ar[rd]^{\pi_2}  & \\
G/P_{J}  & &G/P_{A_2}  \\}
\end{eqnarray*}

Let $\phi: \pi_1^{-1}(X_1)\rightarrow G/P_{A_2}$ be the restrition of $\pi_2$, and it is a $B$-equivariant morphism. Then the loci $M_k:=\{y\in G/P_{A_2}\mid \dim\phi^{-1}(y)\geq k\}$, $k\geq 0$, are all $B$-stable closed  subsets, hence they contain the base point $q$ of $G/P_{A_2}$. It follows that 
\begin{eqnarray}\label{eqn-dim-inequality}
\dim \phi^{-1}(g_2\cdot q) \leq \dim \phi^{-1}(q) \mbox{ for all } g_2\in G.
\end{eqnarray}

Note that $\pi_1$ sends each fiber of $\phi$ isomorphically onto its image in $G/P_J$. Then by \eqref{eqn-dim-inequality} the dimension of $\pi_1(\phi^{-1}(q))=X_3$ is larger than or equal to $\pi_1(\phi^{-1}(g_2\cdot q))=X_1\cap g_2X_2$ for all $g_2\in G$. This verifies the inequality \eqref{eqn-intersection-dimension}.

Set $d:=\dim X_3=\dim\phi^{-1}(q)$. Then $M_d$ is $B$-stable, and $\dim\phi^{-1}(y)=d$ for each $y\in M_d$. Each irreducible component of $M_d$ is a Schubert variety on $G/P_{A_2}$, and contains the base point $q$. 

Take any $y\in M_d$ near the base point $q\in G/P_{A_2}$.  We can find an affine curve $C\subset G$ such that the neutral element $e\in C$ and the point $y=a_0\cdot q$ for some $a_0\in C$. Now we get a flat family $\phi^{-1}(a\cdot q)$ with $a\in C$. Since $\pi_1$ sends $\phi^{-1}(a\cdot q)$ isomorphically onto $X_1\cap a\cdot X_2$ for each $a\in C$. 
Since the intersection $X_1\cap X_2$ is smooth and connected, so is $X_1\cap aX_2$ for each point $a$ in an open neighborhood $C^o$ of $e\in C$.
By the assumption of $y$, we may assume $a_0\in C^o$.
Now for each $a\in C^o$, the intersection $X_1\cap a\cdot X_2$ has the same homology class with $X_3=X_1\cap X_2$. By Theorem \ref{main_thm_simple_ver}, since $G$ is of type $ADE$, for each $a\in C^o$ there exists $g\in G$ such that $X_1\cap a \cdot X_2=g \cdot X_3=g \cdot X_1\cap g\cdot X_2$. In particular, there exists $g_0\in G$ such that $\pi_1(\phi^{-1}(y))=X_1\cap a_0 \cdot X_2=g_0\cdot X_1\cap g_0\cdot X_2$. 

Since $X_3$ is a Schubert variety on $G/P_J$, its stabilizer in $G$ is a parabolic subgroup. Hence the set of $G$-translations of $X_3$ is a projective family, and thus the set  $F:=\{y\in M_d\mid \pi_1(\phi^{-1}(y))=g\cdot X_1\cap g\cdot X_2 \mbox{ for some } g\in G \}$ is projective. The argument in the last paragraph shows that $F$ contains a nonempty open subset of each irreducible component of $M_d$. It follows that $F=M_d$. This completes the proof of this proposition.
\end{proof}

\begin{lemma}\label{intersection}
    Suppose that $I\cap L=\{\alpha, \beta\}$, then $X_0=G_L/G_L\cap P_{R\backslash \{\alpha, \beta\}}=\pi^{-1}_\alpha(G_L/G_L\cap P_{R\backslash \{\beta\}})\cap \pi^{-1}_\beta(G_L/G_L\cap P_{R\backslash\{\alpha\}})$.
\end{lemma}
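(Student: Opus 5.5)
We interpret the maps as in the rest of the section: $\pi_\alpha: X=G/P_J\to G/P_{J\cup\{\alpha\}}$, and $G_L/(G_L\cap P_{R\backslash\{\beta\}})$ is regarded as a Schubert variety in the target $G/P_{J\cup\{\alpha\}}$. Indeed, since $I\cap L=\{\alpha,\beta\}$, we have $L\cap(J\cup\{\alpha\})=L\backslash\{\beta\}$, so $G_L/(G_L\cap P_{J\cup\{\alpha\}})=G_L/(G_L\cap P_{R\backslash\{\beta\}})$. Call this variety $Y_\beta$, and define $Y_\alpha\subset G/P_{J\cup\{\beta\}}$ symmetrically. The claim is then the set-theoretic equality $X_0=\pi_\alpha^{-1}(Y_\beta)\cap\pi_\beta^{-1}(Y_\alpha)$. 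The inclusion $\subset$ is immediate, because $\pi_\alpha(X_0)=G_L\cdot\pi_\alpha(o)=Y_\beta$, and likewise for $\beta$.

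For the reverse inclusion we compute both preimages as orbits. Since $P_{J\cup\{\alpha\}}\cdot o=\pi_\alpha^{-1}(\pi_\alpha(o))$, we get $\pi_\alpha^{-1}(Y_\beta)=G_L P_{J\cup\{\alpha\}}\cdot o$, and similarly $\pi_\beta^{-1}(Y_\alpha)=G_L P_{J\cup\{\beta\}}\cdot o$. Take $x$ in the intersection. Because $G_L\subset$ the stabilizer of $X_0$ and everything is $G_L$-stable, we may translate by $G_L$ and assume $\pi_\alpha(x)=\pi_\alpha(o)$. Then $x$ lies in the fiber $F_\alpha:=P_{J\cup\{\alpha\}}/P_J$, and it remains to show
\[F_\alpha\cap \pi_\beta^{-1}(Y_\alpha)\subset X_0.\]
The fiber $F_\alpha$ is the Picard number one space $G_{C_\alpha}/P_{C_\alpha\backslash\{\alpha\}}$. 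The set $F_\alpha\cap\pi_\beta^{-1}(Y_\alpha)$ is stable under $P_{J}\cap P_{J\cup\{\alpha\}}$ together with $G_{L\cap C_\alpha}$, so it is a union of Schubert cells of $F_\alpha$, i.e. $T$-fixed-point closures. It therefore suffices to check $T$-fixed points $w\cdot o$ with $w\in W_{C_\alpha}$ minimal and $\pi_\beta(w\cdot o)\in Y_\alpha$. Now $Y_\alpha$ has $T$-fixed points indexed by $W_L$ modulo $W_{L\cap(J\cup\{\beta\})}$, so $w\in W_L W_{J\cup\{\beta\}}$. Combined with $w\in W_{J\cup\{\alpha\}}$, we need a parabolic double coset argument, namely
\[W_{J\cup\{\alpha\}}\cap W_LW_{J\cup\{\beta\}}\subset W_LW_J.\]
This should follow from the standard fact that for standard parabolic subgroups $W_AW_B\cap W_C=(W_A\cap W_C)(W_B\cap W_C)$ when suitable minimal coset representatives are used.

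The main obstacle is this Weyl group step, equivalently verifying that the intersection is exactly the $G_L$-orbit closure rather than something larger. I would first try to prove it via minimal length representatives: write $w=uv$ with $u\in W_L$ minimal for $W_L\cap W_{J\cup\{\beta\}}$ on the right, and use the fact that reduced words of elements of $W_{J\cup\{\alpha\}}$ involve only letters from $J\cup\{\alpha\}$. This should force $u\in W_{L\cap(J\cup\{\alpha\})}$ and $v\in W_J$ modulo adjustments. If that gets messy, I would fall back on a tangent-space/dimension argument. Here the intersection $\pi_\alpha^{-1}(Y_\beta)\cap\pi_\beta^{-1}(Y_\alpha)$ is a $B$-stable closed set, hence a union of Schubert varieties containing $X_0$. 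A dimension count through the fibration $\pi_\alpha$ would then identify each component with $X_0$, using that $\alpha\neq\beta$ lie in $L$.
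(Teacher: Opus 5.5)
Your proof is correct, and the step you flag as the main obstacle goes through exactly as you suggest, with no adjustments needed. Suppose $w\in W_C\cap W_AW_B$. Write $wW_B=aW_B$ with $a\in W_A$, and let $u$ be the minimal element of this coset.
\begin{itemize}
\item Since $a=u\,(u^{-1}a)$ with $u^{-1}a\in W_B$ and lengths adding, $u$ has support in $A$, so $u\in W_A$.
\item Since $w=uv$ with $v\in W_B$ and $\ell(w)=\ell(u)+\ell(v)$, and every reduced word of an element of $W_C$ uses only letters from $C$, both $u$ and $v$ lie in $W_C$.
\end{itemize}
Taking $A=L$, $B=J\cup\{\beta\}$, $C=J\cup\{\alpha\}$ gives $w\in W_{L\setminus\{\beta\}}W_J\subset W_LW_J$, hence $w\cdot o\in X_0$.

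The dimension-count fallback is therefore unnecessary. It would also not suffice on its own, since it cannot exclude extra Schubert components of smaller dimension.

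One small slip: $F_\alpha\cap\pi_\beta^{-1}(Y_\alpha)$ need not be $P_J$-stable. For example, take $G$ of type $A_4$, $I=\{\alpha_2,\alpha_4\}$, $L=\{\alpha_2,\alpha_3,\alpha_4\}$, $\alpha=\alpha_2$. Then this set is the $\mathbb{P}^2$ of planes through $e_1$ inside $Gr(2,4)$, and $U_{-\alpha_1}\subset P_J$ moves it. However, the set is closed and $B$-stable, and that is all your reduction to $T$-fixed points uses.

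Compared with the paper: the paper works with the $\pi_\beta$-fiber rather than the $\pi_\alpha$-fiber. It observes that $\pi_\alpha$ maps $\pi_\beta^{-1}(o)=P_{J\cup\{\beta\}}/P_J$ isomorphically into $G/P_{J\cup\{\alpha\}}$, because $P_{J\cup\{\alpha\}}\cap P_{J\cup\{\beta\}}=P_J$. This reduces the fiber statement to the intersection formula $G_{C_\beta}\cdot o'\cap G_L\cdot o'=G_{L\cap C_\beta}\cdot o'$ for two subdiagram-type Schubert varieties in one homogeneous space, where $o'$ is the base point of $G/P_{J\cup\{\alpha\}}$. The paper uses this formula without proof, then sweeps by $G_L$ as you do.

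Your combination of $B$-stability, $T$-fixed points and the parabolic double-coset identity is precisely what justifies such an intersection formula. So your route is more self-contained, while the paper's isomorphic-projection trick is shorter and more geometric.
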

\begin{proof} 
Without ambiguity, we will use the same notation for the homogeneous subspaces if they are mapped isomorphically under the projections $\pi_\alpha, \pi_\beta$. The following diagram is for the projections. For more illustrations, see Example \ref{fiber_intersection}.
\begin{eqnarray*}
\xymatrix{& G/P_{J}\ar[ld]_{\pi_\alpha}\ar[rd]^{\pi_\beta}  & \\
G/P_{J\cup \{\alpha\}}  & &G/P_{J\cup \{\beta\}}  \\}
\end{eqnarray*}
     Note that for the base point $o\in G_L/G_L\cap P_{R\backslash\{\alpha\}}\subset G/P_{J\cup \{\beta\}}$, $\pi_\beta^{-1}(o)=G_{C_\beta}/G_{C_\beta}\cap P_{R\backslash \{\beta\}}\subset G/P_J $ is mapped biholomorphically into $G/P_{J\cup \{\alpha\}}$ through the projection $\pi_\alpha$. Then 
     \[\begin{aligned}
        \pi_\alpha\circ \pi^{-1}_\beta(o) \cap (G_L/G_L\cap P_{R\backslash \{\beta\}})&=(G_{C_\beta}/G_{C_\beta}\cap P_{R\backslash \{\beta\}}) \cap (G_L/G_L\cap P_{R\backslash \{\beta\}})\\&=G_{L\cap C_\beta}/(G_{L\cap C_\beta}\cap P_{R\backslash \{\beta\}}). \end{aligned} \]
        Now, since $\pi_\alpha$ is an isomorphism on $\pi_\beta^{-1}(o)$, we know \[\pi^{-1}_\beta(o) \cap \pi^{-1}_\alpha(G_L/G_L\cap P_{R\backslash \{\beta\}})=G_{L\cap C_\beta}/G_{L\cap C_\beta}\cap P_{R\backslash \{\beta\}},\] which is exactly the central fiber of $\pi_\beta|_{X_0}:X_0 \rightarrow G_L/G_L\cap P_{R\backslash\{\alpha\}}$. By varying $o$ through $G_L$-actions, we prove the lemma.
\end{proof}

\begin{example}\label{fiber_intersection}
To give more illustrations, we write the example as follows, where $L$ and $C_\beta$ are the sets of simple roots contained in the box and the dashed box repectively.
 \begin{figure}[htbp]
\begin{tikzcd}[column sep=0.3cm]
\node (node1){}; &
\node(node2) {
  \begin{tikzpicture}[scale=0.5] 
  \draw[thick] (0,0) -- (4.5, 0)  ; 
  \draw[thick] (4.5,0) -- (5.5, 1)  ;
  \draw[thick] (4.5,0) -- (5.5, -1)  ;
\draw[ thick, fill=white] (0,0) circle (3pt) node[above, outer sep=3pt]{};
		\draw[ thick, fill=black] (1.5,0) circle (3pt) node[above, outer sep=3pt]{$\beta$};
        \draw[ thick, fill=white] (3,0) circle (3pt) node[above, outer sep=3pt]{};
        \draw[ thick, fill=white] (4.5,0) circle (3pt) node[above, outer sep=3pt]{};
         \draw[ thick, fill=white] (5.5,1) circle (3pt) node[above, outer sep=3pt]{};
        \draw[ thick, fill=black] (5.5,-1) circle (3pt) node[above, outer sep=3pt]{$\alpha$};
         \draw[thick] (-0.4,0.6) -- (4.7,0.6)  ;
           \draw[thick] (-0.4,-0.6) -- (4.5,-0.6)  ;
           \draw[thick] (-0.4,0.6) -- (-0.4,-0.6);
            \draw[thick] (4.5,-0.6) -- (5.5,-1.6);
            \draw[thick] (5.5,-1.6) -- (6.2,-1);
             \draw[thick] (4.7,0.6) -- (6.2,-1)  ;

               \draw[thick,dashed] (-0.6,0.8) -- (4.6,0.8)  ;
           \draw[thick,dashed] (-0.6,-0.8) -- (4.6,-0.8)  ;
           \draw[thick,dashed] (-0.6,0.8) -- (-0.6,-0.8);
            \draw[thick,dashed] (4.6,0.8) -- (5.5,1.6);
            \draw[thick,dashed] (5.5,1.6) -- (6.2,0.9);
             \draw[thick,dashed] (4.5,-0.6) -- (6.2,0.9)  ;
	\end{tikzpicture} };
       \arrow[ld, "\pi_{\alpha}", from=node2, to=node4]   \arrow[rd, "\pi_\beta", from=node2, to=node6] & 
       \node(node3){}; \\
      \node (node4) {
        \begin{tikzpicture}[scale=0.5]
            \draw[thick] (0,0) -- (4.5, 0)  ; 
  \draw[thick] (4.5,0) -- (5.5, 1)  ;
  \draw[thick] (4.5,0) -- (5.5, -1)  ;
\draw[ thick, fill=white] (0,0) circle (3pt) node[above, outer sep=3pt]{};
		\draw[ thick, fill=black] (1.5,0) circle (3pt) node[above, outer sep=3pt]{$\beta$};
        \draw[ thick, fill=white] (3,0) circle (3pt) node[above, outer sep=3pt]{};
        \draw[ thick, fill=white] (4.5,0) circle (3pt) node[above, outer sep=3pt]{};
         \draw[ thick, fill=white] (5.5,1) circle (3pt) node[above, outer sep=3pt]{};
        \draw[ thick, fill=white] (5.5,-1) circle (3pt) node[above, outer sep=3pt]{$\alpha$};
         \draw[thick] (-0.4,0.6) -- (4.7,0.6)  ;
           \draw[thick] (-0.4,-0.6) -- (4.5,-0.6)  ;
           \draw[thick] (-0.4,0.6) -- (-0.4,-0.6);
            \draw[thick] (4.5,-0.6) -- (5.5,-1.6);
            \draw[thick] (5.5,-1.6) -- (6.2,-1);
             \draw[thick] (4.7,0.6) -- (6.2,-1)  ;

               \draw[thick,dashed] (-0.6,0.8) -- (4.6,0.8)  ;
           \draw[thick,dashed] (-0.6,-0.8) -- (4.6,-0.8)  ;
           \draw[thick,dashed] (-0.6,0.8) -- (-0.6,-0.8);
            \draw[thick,dashed] (4.6,0.8) -- (5.5,1.6);
            \draw[thick,dashed] (5.5,1.6) -- (6.2,0.9);
             \draw[thick,dashed] (4.5,-0.6) -- (6.2,0.9)  ;
	\end{tikzpicture} };
    & \node[xshift=3cm](node6) { \begin{tikzpicture}[scale=0.5]
            \draw[thick] (0,0) -- (4.5, 0)  ; 
  \draw[thick] (4.5,0) -- (5.5, 1)  ;
  \draw[thick] (4.5,0) -- (5.5, -1)  ;
\draw[ thick, fill=white] (0,0) circle (3pt) node[above, outer sep=3pt]{};
		\draw[ thick, fill=white] (1.5,0) circle (3pt) node[above, outer sep=3pt]{$\beta$};
        \draw[ thick, fill=white] (3,0) circle (3pt) node[above, outer sep=3pt]{};
        \draw[ thick, fill=white] (4.5,0) circle (3pt) node[above, outer sep=3pt]{};
         \draw[ thick, fill=white] (5.5,1) circle (3pt) node[above, outer sep=3pt]{};
        \draw[ thick, fill=black] (5.5,-1) circle (3pt) node[above, outer sep=3pt]{$\alpha$};
         \draw[thick] (-0.4,0.6) -- (4.7,0.6)  ;
           \draw[thick] (-0.4,-0.6) -- (4.5,-0.6)  ;
           \draw[thick] (-0.4,0.6) -- (-0.4,-0.6);
            \draw[thick] (4.5,-0.6) -- (5.5,-1.6);
            \draw[thick] (5.5,-1.6) -- (6.2,-1);
             \draw[thick] (4.7,0.6) -- (6.2,-1)  ;
	\end{tikzpicture}};
   \end{tikzcd}
\end{figure}
\end{example}

\begin{proposition}\label{thm_with_condition_E}
     Assume that $G$ is of type $ADE$, the subdigram $\Gamma_L(I\cap L)$ is connected and $|I\cap L|=2$ (i.e., $X_0$ is of Picard number 2), then Theorem \ref{Schur_subdiagram} holds.
\end{proposition}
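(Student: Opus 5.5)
Write $I\cap L=\{\alpha,\beta\}$ and let $Z\subset X$ be an irreducible subvariety with $[Z]=r[X_0]$. The plan is to push $Z$ down along the two Mori contractions $\pi_\alpha$ and $\pi_\beta$. We then use Schur rigidity of the Picard-number-one images (Lemma \ref{schur_pic_num_one}) to decompose $Z$ as a union of translates of the fiber-product description in Lemma \ref{intersection}.

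\emph{Step 1: the images are Schur rigid.} Set $Y_\beta:=G_L/(G_L\cap P_{R\backslash\{\beta\}})\subset G/P_{J\cup\{\alpha\}}$ and $Y_\alpha:=G_L/(G_L\cap P_{R\backslash\{\alpha\}})\subset G/P_{J\cup\{\beta\}}$. Since $X_0$ is not a fiber bundle over a projective space, Lemma \ref{lem-fiber-bundle-Schubert} shows that neither $Y_\alpha$ nor $Y_\beta$ is a projective space. Both are connected subdiagram-type Schubert varieties of Picard number one marked at long roots, so Lemma \ref{schur_pic_num_one} makes each of them Schur rigid.

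\emph{Step 2: the images of $Z$ are translates of these.} Compare push-forward classes. We have $\pi_{\alpha*}[Z]=r\,\pi_{\alpha*}[X_0]=r[Y_\beta]$, because $\pi_\alpha|_{X_0}$ is a fiber bundle with positive-dimensional fibers $G_{L\cap C_\alpha}/\cdots$. So we must instead compare $Z$ with $\pi_\alpha^{-1}(\pi_\alpha(Z))$ using dimensions. The plan is as follows.

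First, show that $\pi_\alpha(Z)$ has dimension $\dim Y_\beta$ and satisfies $[\pi_\alpha(Z)]=d[Y_\beta]$ for some $d\le r$. To do this, intersect with pullbacks of ample classes: $[Z]\cdot \pi_\alpha^*h^{\dim Y_\beta}$ is positive while $\pi_\alpha^*h^{\dim Y_\beta+1}$ is killed. By Step 1, $\pi_\alpha(Z)$ is then a union of translates $g_i Y_\beta$. Since $Z$ is irreducible, $\pi_\alpha(Z)=g Y_\beta$ is irreducible, so $Z\subset g\,\pi_\alpha^{-1}(Y_\beta)$; similarly $Z\subset h\,\pi_\beta^{-1}(Y_\alpha)$.

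Second, $Z$ then lies in $g\pi_\alpha^{-1}(Y_\beta)\cap h\pi_\beta^{-1}(Y_\alpha)$. By Lemma \ref{intersection}, this is a translate of the intersection of two subdiagram Schubert varieties whose intersection is $X_0$ (up to a common translate). Lemma \ref{intersection_max}, valid since $G$ is of type $ADE$, bounds the dimension of such an intersection by $\dim X_0$, with equality exactly when it is a translate of $X_0$. Since $\dim Z=\dim X_0$, the intersection has full dimension, so it is $g_3X_0$. Hence $Z=g_3X_0$ and $r=1$.

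\emph{Step 3: reducible cycles.} In general $Z=\sum m_jZ_j$ is a cycle. Each component $Z_j$ has the same dimension, and its class must be a nonnegative multiple of $[X_0]$. The plan is to show this by pushing forward and using that $[X_0]$ spans an extremal ray in the cone of effective cycles, which holds for Schubert classes. Then Step 2 applies to each $Z_j$, giving $Z=\sum g_jX_0$.

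\emph{Main obstacle.} I expect the hardest step to be the first part of Step 2: showing that $\pi_\alpha(Z)$ has the right dimension and that $\pi_\alpha^{-1}$ of its irreducible image contains $Z$. It is not automatic that $\pi_\alpha|_Z$ has fibers of the expected dimension $\dim(G_{L\cap C_\alpha}/\cdots)$. First I would expand $[Z]$ in the Schubert basis and pair it with pullbacks of Schubert classes from $G/P_{J\cup\{\alpha\}}$; the fiber dimension of $\pi_\alpha|_Z$ is governed by which such pairings vanish, as in the proof of Lemma \ref{pic_num_one}. Failing this, I would reduce the $r$-fold case to the homological-rigidity argument via a degeneration to a $B$-fixed cycle, invoking Proposition \ref{Schubert closed}.
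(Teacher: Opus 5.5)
Steps 1 and 3 are fine. Step 3 uses nonnegativity of Schubert coefficients of effective cycles, and so covers reducible cycles, which the paper leaves implicit. The part of Step 2 you flag as the main obstacle is routine. Once your ample-pullback test gives $\dim\pi_\alpha(Z)=\dim Y_\beta$, push forward $H^f\cap[Z]=r\,H^f\cap[X_0]$, with $H$ ample on $X$ and $f$ the fibre dimension. This gives $[\pi_\alpha(Z)]\in\mathbb{Q}^+[Y_\beta]$. Note that your displayed identity $\pi_{\alpha*}[X_0]=[Y_\beta]$ is wrong; it is $0$.

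The genuine gap is the second half of Step 2. Lemma \ref{intersection_max} is about two Schubert varieties \emph{of subdiagram type}, but $\pi_\alpha^{-1}(Y_\beta)$ and $\pi_\beta^{-1}(Y_\alpha)$ are not of that type in general.

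Indeed, $\pi_\alpha^{-1}(Y_\beta)$ contains whole fibres $G_{C_\alpha}/(G_{C_\alpha}\cap P_J)$. If it were some $G_M/(G_M\cap P_J)$, then $M\supset L\cup C_\alpha$. Its $\pi_\alpha$-image would then contain $G_{L\cup C_\alpha}/(G_{L\cup C_\alpha}\cap P_{R\backslash\{\beta\}})\supsetneq Y_\beta$ whenever $C_\alpha\not\subset L$. An example satisfying all your hypotheses is $G=A_7$, $I=\{\alpha_3,\alpha_5\}$, $L=\{\alpha_2,\dots,\alpha_6\}$. When $C_\alpha\subset L$, one has $\pi_\alpha^{-1}(Y_\beta)=X_0$ and there is nothing to prove.

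Lemma \ref{intersection} only identifies the intersection at the identity with $X_0$; it does not make the two factors subdiagram type. The proof of Lemma \ref{intersection_max} genuinely uses that $X_2$ is the single orbit $P_{A_2}\cdot o$ of its stabilizer, which fails here. For your pair you still get $\dim\le\dim X_0$, by semicontinuity and $B\times B$-invariance of $\{g:\dim(X_1\cap gX_2)\ge k\}$. You do not get the equality case, and that is exactly the content you need.

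The missing idea is to argue fibrewise, as the paper does. For $x\in\pi_\beta(Z)$, $\pi_\alpha$ maps $\pi_\beta^{-1}(x)$ isomorphically onto a translate of $G_{C_\beta}/(G_{C_\beta}\cap P_{R\backslash\{\beta\}})$ in $G/P_{J\cup\{\alpha\}}$. Hence $\pi_\alpha(Z\cap\pi_\beta^{-1}(x))$ lies in the intersection of $Y_\beta$ and $g_{2,x}\cdot G_{C_\beta}/(\cdots)$, which are genuinely of subdiagram type, with $L\cap C_\beta\ni\beta$. The lower bound $\dim(Z\cap\pi_\beta^{-1}(x))\ge\dim Z-\dim Y_\alpha$, together with Lemma \ref{intersection_max} applied there, forces this image to be a translate of $G_{L\cap C_\beta}/(G_{L\cap C_\beta}\cap P_{R\backslash\{\beta\}})$. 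Lemma \ref{stab.subdiagram} then shows that these translates inside $Y_\beta$ are parametrized by $Y_\alpha$. So $Z$ is the universal family over this Chow space, i.e.\ $Z=X_0$ after translating so that $\pi_\alpha(Z)=Y_\beta$.
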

    \begin{proof}
 Let $Z$ be an irreducible reduced subvariety in $X$ with $[Z]=r[G_L/G_L\cap P_{J}]$.  Write $I\cap L=\{\alpha, \beta\}$. We still have the projections $\pi_\alpha, \pi_\beta$. 
 \begin{eqnarray*}
\xymatrix{& G/P_{J}\ar[ld]_{\pi_\alpha}\ar[rd]^{\pi_\beta}  & \\
G/P_{J\cup \{\alpha\}}  & &G/P_{J\cup \{\beta\}}  \\}
\end{eqnarray*}
We know $[\pi_\alpha(Z)]\in \mathbb{R}^+[G_L/G_L\cap P_{R\backslash\{\beta\}}] .$
 By Lemma \ref{lem-fiber-bundle-Schubert}, $G_L/(G_L\cap P_{R\backslash \{\beta\}})$ is not a projective space under the assumption of Theorem \ref{Schur_subdiagram}.
Thus $(G_L/G_L\cap P_{R\backslash \{\beta\}}, G/P_{J\cup\{\alpha\}})$ is Schur rigid, by Lemma \ref{schur_pic_num_one}. 
As $\pi_\alpha(Z)$ is irreducible and reduced, we have $\pi_{\alpha}(Z)=g_1\cdot (G_L/G_L\cap P_{R\backslash \{\beta\}})$ for some $g_1\in G$ (as $G$ is of type $ADE$, there is no exceptional pair). Similarly $\pi_\beta(Z)=g_2\cdot (G_L/G_L\cap P_{R\backslash \{\alpha\}})$ for some $g_2\in G$.  Without loss of generality we may assume that $g_1=e$, and we will show that $Z=X_0$ under this assumption.
   We know \begin{equation}
       Z\subset \pi^{-1}_\alpha(G_L/G_L\cap P_{R\backslash \{\beta\}})\cap g_2 \cdot\pi^{-1}_\beta(G_L/G_L\cap P_{R\backslash\{\alpha\}}). \end{equation}  
  For any $x\in g_2\cdot (G_L/G_L\cap P_{R\backslash \{\alpha\}})$, let $Z_x:=Z\cap \pi^{-1}_\beta(x)$, then 
  \begin{equation}\label{Z_x_contained}
  Z_x\subset \pi^{-1}_\alpha(G_L/G_L\cap P_{R\backslash \{\beta\}})\cap \pi^{-1}_\beta(x),\end{equation}     
where $\pi_\beta^{-1}(x)=g_{2,x}\cdot\pi_\beta^{-1}(o)=g_{2,x}\cdot G_{C_\beta}/(G_{C_\beta}\cap P_{R\backslash \{\beta\}})$ for some $g_{2,x}\in G$ (depending on $x$). By the upper semi-continuity of the fiber dimension,
\begin{equation}\label{Z_x_dim}
\begin{aligned}
    \dim(Z_x)&\geq \dim(Z)-\dim(G_L/G_L\cap P_{R\backslash \{\alpha\}}) \\
    & =\dim(G_L/G_L\cap P_J)-\dim(G_L/G_L\cap P_{R\backslash \{\alpha\}})\\&
    =\dim(G_{L\cap C_\beta}/G_{L\cap C_\beta}\cap P_{R\backslash \{\beta\}}).
\end{aligned}
\end{equation}
Furthermore, in (\ref{Z_x_contained}), we take the projection $\pi_\alpha$ on $Z_x$, then  \begin{equation}\label{pi_alpha_Z_x}
    \begin{aligned}
    \pi_{\alpha} (Z_x)&\subset  (G_L/G_L\cap P_{R\backslash \{\beta\}})\cap  \pi_\alpha\circ \pi^{-1}_\beta(x)\\
    &=(G_L/G_L\cap P_{R\backslash \{\beta\}})\cap g_{2,x}\cdot \pi_\alpha(G_{C_\beta}/G_{C_\beta}\cap P_{R\backslash\{\beta\}}). 
    \end{aligned}\end{equation}
Note that $\pi_\alpha$ sends $\pi_\beta^{-1}(x)$ (and hence also $Z_x$) isomorphically onto its image, we have $\dim(\pi_{\alpha}(Z_x))=\dim(Z_x)$ and rewrite (\ref{pi_alpha_Z_x}) as
 \begin{equation}\label{pi_alpha_Z_x_rewrite}
    \pi_{\alpha} (Z_x)\subset 
    (G_L/G_L\cap P_{R\backslash \{\beta\}})\cap g_{2,x}\cdot (G_{C_\beta}/G_{C_\beta}\cap P_{R\backslash\{\beta\}}) 
  \end{equation} by identifying $G_{C_\beta}/(G_{C_\beta}\cap P_{R\backslash\{\beta\}})$ as a homogeneous subspace in $G/P_{J\cup\{\alpha\}}$.
 From (\ref{Z_x_dim}), we have
\begin{equation}
    \dim(\pi_\alpha(Z_x))=\dim(Z_x)\geq \dim(G_{L\cap C_\beta}/G_{L\cap C_\beta}\cap P_{R\backslash\{\beta\}}).\end{equation}
We know from Lemma \ref{intersection_max} that for each $x\in G_L/(G_L\cap P_{R\backslash\{\alpha\}})$, there exists $g_{3,x}\in G$ (depending on $x$) such that 
\begin{equation}
 \pi_\alpha(Z_x)
    =g_{3,x}\cdot (G_{L\cap C_\beta}/G_{L\cap C_\beta}\cap P_{R\backslash \{\beta\}})\subset G_L/(G_L\cap P_{R\backslash\{\beta\}}).
   \end{equation}
     Note that $\alpha$ is adjacent to $C_\beta$ in $L$, we know from Lemma \ref{stab.subdiagram} that $\alpha$ is the only simple root in $L$ so that the parabolic subgroup $P_{R\backslash\{\alpha\}}$ stabilizes $G_{L\cap C_\beta}/(G_{L\cap C_\beta}\cap P_{R\backslash \{\beta\}})$. Then consider the Chow space of \[G_{L\cap C_\beta}/(G_{L\cap C_\beta}\cap P_{R\backslash \{\beta\}})\subset G_L/(G_L\cap P_{R\backslash\{\beta\}}),\] it is exactly isomorphic to $G_L/(G_L\cap P_{R\backslash\{\alpha\}})$, since $\beta$ is a long root.  
Therefore, we have an injective (and hence bijective) holomorphic map \[\begin{aligned}
    x\in g_2\cdot (G_L/G_L\cap P_{R\backslash\{\alpha\}}) \rightarrow &[g_{3,x}\cdot G_{L\cap C_\beta}/G_{L\cap C_\beta}\cap P_{R\backslash\{\beta\}}] \\&\in \operatorname{Chow}(G_L/G_L\cap P_{R\backslash\{\beta\}}) ,\end{aligned}\] and then $Z=\mathcal{U}=X_0 \subset X$,
      where $\mathcal{U}$ is the universal family of the Chow space of $G_{L\cap C_\beta}/(G_{L\cap C_\beta}\cap P_{R\backslash \{\beta\}})\subset G_L/(G_L\cap P_{R\backslash\{\beta\}})$ (also we must have $g_2\cdot (G_L/G_L\cap P_{R\backslash\{\alpha\}} )=G_L/G_L\cap P_{R\backslash\{\alpha\}}$). 
    \end{proof}

\begin{proposition}\label{no_condition_E}
Suppose that $\Gamma_L$ is connected, $I\cap L$ consists of long roots and $|I\cap L|\geq 2$, $\bigcap\limits_{\alpha\in I\cap L} C_\alpha\subset L$ and $X_0=G_L/(G_L\cap P_{J})$ is not a fiber bundle over the projective space. Then the Schubert variety $X_0$ on $X=G/P_J$ has Schur rigidity.
\end{proposition}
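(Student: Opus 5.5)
Let $Z\subset X$ be an irreducible reduced subvariety with $[Z]=r[X_0]$; we must show $r=1$ and $Z=g\cdot X_0$ for some $g\in G$. (Reducible cycles then follow componentwise, since each component has class a positive multiple of $[X_0]$ in the extremal ray.) The plan is to project $Z$ along each Mori contraction $\pi_\alpha:X\to G/P_{J\cup\{\alpha\}}$ with $\alpha\in I\cap L$, identify each image by Schur rigidity in lower Picard number, and then recover $Z$ as an intersection of preimages using the hypothesis $\bigcap_{\alpha\in I\cap L}C_\alpha\subset L$.

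\emph{Step 1: the images.} For each $\alpha\in I\cap L$, the class of $\pi_\alpha(Z)$ is a positive multiple of $[X^\alpha_0]$, where $X^\alpha_0:=G_L/(G_L\cap P_{J\cup\{\alpha\}})$. This is a Schubert variety of subdiagram type attached to the connected diagram $\Gamma_L(I\cap L\setminus\{\alpha\})$, and it has Picard number $|I\cap L|-1\geq1$. By Lemma \ref{lem-fiber-bundle-Schubert}, it is still not a fiber bundle over a projective space: any Mori contraction of $X^\alpha_0$ onto $\mathbb P^N$ composes with $X_0\to X^\alpha_0$ to give one for $X_0$. Induction on $|I\cap L|$ then gives $\pi_\alpha(Z)=g_\alpha\cdot X^\alpha_0$. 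The base case $|I\cap L|=1$ is Lemma \ref{schur_pic_num_one}; for $|I\cap L|=2$ we use Lemma \ref{schur_pic_num_one} directly on the image. For larger $|I\cap L|$, we need the induction hypothesis to also cover images with $\bigcap C\not\subset L$. These only occur when the image has Picard number $2$ and $G$ is of type $DE$, where Proposition \ref{thm_with_condition_E} applies. Here we use that all marked roots are long, so no exceptional pairs occur and the translates are by $G$ itself. Comparing degrees, the multiplicity of $\pi_\alpha(Z)$ forces $\pi_\alpha|_Z$ to be generically of degree $r$ onto its image, with generic fibre dimension $\dim X_0-\dim X_0^\alpha$.

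\emph{Step 2: gluing.} Translate so that $g_{\alpha_0}=e$ for one fixed $\alpha_0$. Then $Z\subset\bigcap_\alpha g_\alpha\cdot\pi_\alpha^{-1}(X^\alpha_0)$. Next, mimic Lemma \ref{intersection} to show that $\bigcap_{\alpha\in I\cap L}\pi_\alpha^{-1}(X^\alpha_0)=X_0$ when all $g_\alpha=e$. A point of this intersection lies in a translate $hP_J$ with $h\in G_L P_{J\cup\{\alpha\}}$ for every $\alpha$. The intersection of these double cosets is controlled by the group generated by $G_L$ and $\bigcap_\alpha G_{C_\alpha}$, and $\bigcap C_\alpha\subset L$ makes this exactly $G_L$ modulo $P_J$. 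This is precisely where the hypothesis replaces the $D/E$ phenomenon of Example \ref{Exam_of_condition_E}.

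\emph{Step 3: aligning the translates.} We still need all $g_\alpha$ to agree modulo stabilizers. For two roots $\alpha,\beta$, look at a fibre $Z_x=Z\cap\pi_\beta^{-1}(x)$. By the argument of Proposition \ref{thm_with_condition_E}, its image under $\pi_\alpha$ lies in $X^\alpha_0\cap g\cdot G_{C_\beta}/(\cdots)$ and has the maximal possible dimension. We then use Lemma \ref{intersection_max} to see that $Z_x$ is a translate of the fibre Schubert variety $G_{L\cap C_\beta}/(\cdots)$. Since Lemma \ref{intersection_max} is stated only for type $ADE$, outside $ADE$ we would replace it with the homological rigidity of Corollary \ref{subdiagram_rigid_marked_root_outside} applied within $\pi_\beta^{-1}(x)$, together with the semicontinuity argument from the proof of that lemma. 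The map $x\mapsto[Z_x]$ into the Chow variety of that fibre Schubert variety, which is $G/\mathrm{Stab}$ by Lemma \ref{stab.subdiagram}, is then injective. This forces $Z$ to be the universal family over $g_\beta\cdot X^\beta_0$, hence a translate of $X_0$, and $r=1$.

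The main obstacle is Step 3 for general type, i.e., the maximal-intersection statement without the $ADE$ hypothesis. The hypothesis $\bigcap C_\alpha\subset L$ must be what guarantees that the relevant fibre intersections are again of subdiagram type with long marked roots, so that rigidity applies at each step.
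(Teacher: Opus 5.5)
Steps~1 and~2 are sound. Inducting on $|I\cap L|$ through the elementary contractions $\pi_\alpha$ gives $\pi_\alpha(Z)=g_\alpha\cdot X_0^\alpha$; use Lemma~\ref{schur_pic_num_one} at the bottom and Proposition~\ref{thm_with_condition_E} for images violating \eqref{eqn-intersection-Dynkin}. The identity $\bigcap_\alpha\pi_\alpha^{-1}(X_0^\alpha)=X_0$ also holds, but the clean reason is this: $\pi_\alpha^{-1}(X_0^\alpha)=G_L\cdot(G_{C_\alpha}\cdot o)\subset G_{L\cup C_\alpha}\cdot o$, and $G_A\cdot o\cap G_{A'}\cdot o=G_{A\cap A'}\cdot o$, since both sides are $B$-stable with the same $T$-fixed points (the minimal representative of a coset meeting $W_A$ lies in $W_A$).

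The genuine gap is Step~3, as you suspect, and as written it does not close. Injectivity of $x\mapsto[Z_x]$ into the Chow variety $G/\mathrm{Stab}$ of $Y=G_{L\cap C_\beta}\cdot o$ is automatic, because the $Z_x$ lie in disjoint $\pi_\beta$-fibres, so it carries no information. That Chow variety fibres over $G/P_{J\cup\{\beta\}}$, with positive-dimensional fibres unless $C_\beta\subset L$. So a priori $Z$ could be the universal family over \emph{any} section above $g_\beta\cdot X_0^\beta$.

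In Proposition~\ref{thm_with_condition_E} the constraint comes instead from the translates of $\bar Y=\pi_\alpha(Y)$ \emph{inside the fixed variety} $X_0^\alpha$, which form $G_L/P_{(L\cap J)\cup\{\beta\}}\cong X_0^\beta$. For $|I\cap L|\ge 3$ this holds only if $\alpha$ is adjacent to $L\cap C_\beta$. Otherwise Lemma~\ref{stab.subdiagram} gives $G_L/P_{(L\cap J)\cup\{\alpha,\beta\}}$, of smaller dimension, and injectivity fails. You would also need that a translate of $\bar Y$ lies in a \emph{unique} translate of $G_{C_\beta}\cdot\bar o$, both for injectivity and to land back in $X_0$. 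Finally, Lemma~\ref{intersection_max} outside type $ADE$ is only asserted. Your reading of $\bigcap_\alpha C_\alpha\subset L$ is also off: it plays no role in the fibre analysis and enters only through Step~2.

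The missing idea is to align the translates \emph{successively}, each time inside a smaller homogeneous subvariety, so that a new translation cannot undo earlier containments. Write $I\cap L=\{\beta_1,\dots,\beta_m\}$. From $Z_1=g_1Z\subset\pi_{\beta_1}^{-1}(X_0^{\beta_1})\subset G_{L_1}\cdot o$ with $L_1=L\cup C_{\beta_1}$, the image $\pi_{\beta_2}(Z_1)$ lies in $G_{L_1}\cdot o_2$, where $o_2$ is the base point of $G/P_{J\cup\{\beta_2\}}$. Its class there is a multiple of $[X_0^{\beta_2}]$, since Schubert classes inject. By Step~1 and Lemma~\ref{rigidity_from_big_to_small}, $X_0^{\beta_2}$ is Schur rigid in $G_{L_1}\cdot o_2$. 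Hence $g_2$ can be taken in $G_{L_1}$, which preserves $G_{L_1}\cdot o$, and
\[
Z_2=g_2Z_1\subset G_{L_1}\cdot o\cap G_{L\cup C_{\beta_2}}\cdot o=G_{L_1\cap(L\cup C_{\beta_2})}\cdot o .
\]
Iterating with $g_{j+1}\in G_{L_j}$ gives $Z_m\subset G_{L\cup\bigcap_i C_{\beta_i}}\cdot o=X_0$, so $Z_m=X_0$ by dimension. This is the paper's argument, closed by your Step~2, and it needs neither Lemma~\ref{intersection_max}, Chow spaces, nor an $ADE$ hypothesis.

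The paper phrases this with projections $\phi_k$ to $G/P_{R\setminus\{\beta_k\}}$ and Theorem~\ref{Schur_HM}. However, the containment $\phi_1^{-1}\phi_1(X_0)\subset G_{L\cup C_{\beta_1}}\cdot o$ needs fibres $G_{C_{\beta_1}}\cdot o$, i.e.\ your $\pi_{\beta_1}$, not $G_{R\setminus\{\beta_1\}}\cdot o$. So your inductive Step~1 is what that argument actually requires. Your fibrewise route could plausibly be completed with the repairs listed above, and would then not even use $\bigcap_\alpha C_\alpha\subset L$. That is substantially more work, and it is not in the proposal.
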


\begin{proof}
Denote by $I\cap L=\{\beta_1,\beta_2,\ldots,\beta_m\}$, all of which are long roots. Consider the natural projections $\phi_k: X=G/P_J\rightarrow G/P_{R\backslash\{\beta_k\}}$ for each $k$. Then $\phi_k(X_0)$ is a Schubert variety of subdiagram type on $G/P_{R\backslash\{\beta_k\}}$, it is not isomorphic to a projective space by Lemma \ref{lem-fiber-bundle-Schubert}, and it has Schur rigidity in $G/P_{R\backslash\{\beta_k\}}$ by Theorem \ref{Schur_HM}.

Let $Z$ be an irreducible reduced closed subvariety of $X$ with homology class $[Z]=r[G_L/G_L\cap P_J]$ for some $r\geq 1$. Now $\phi_1(Z)$ is an irreducible closed subvariety (equipped with the induced reduced structure) on $G/P_{R\backslash\{\beta_1\}}$ whose homology class $[\phi_1(Z)]\in\mathbb{R}^+[\phi_1(X_0)]$. Since $\phi_1(X_0)$ has Schur rigidity in $G/P_{R\setminus\{\beta_1\}}$ (by Theorem \ref{Schur_HM}), we know that there exists $g_1\in G$ such that $g_1\cdot \phi_1(Z)=\phi_1(X_0)$.

The projective subvariety $Z_1:=g_1\cdot Z$ of $X$ is contained in $\phi_1^{-1}\phi_1(X_0)\subset G_{L_1}/(G_{L_1}\cap P_{L_1\backslash I})$, where $L_1:=L\cup C_{\beta_1}$. As a consequence, $\phi_k(Z_1)$ is contained in the Schubert variety $G_{L_1}/(G_{L_1}\cap P_{L_1\backslash\{\beta_k\}})$ of $G/P_{R\backslash\{\beta_k\}}$ for all $k=1,\ldots,m$. Then $\phi_k(Z_1)$ has Schur rigidity in $G_{L_1}/(G_{L_1}\cap P_{L_1\backslash\{\beta_k\}})$ by Lemma \ref{rigidity_from_big_to_small}.

The inclusion $G_{L_1}/(G_{L_1}\cap P_{L_1\backslash\{\beta_2\}})\subset G/P_{R\backslash\{\beta_2\}}$ induces the injection of homology groups. Then $\phi_2(Z_1)$ is an irreducible closed subvariety (equipped with the induced reduced structure) on $G_{L_1}/(G_{L_1}\cap P_{R\backslash\{\beta_2\}})$ whose homology class $[\phi_2(Z_1)]\in\mathbb{R}^+[\phi_2(X_0)]$. Since $\phi_2(X_0)$ has Schur rigidity in $G_{L_1}/(G_{L_1}\cap P_{R\backslash\{\beta_2\}})$, we know that there exists $g_2\in G_{L_1}$ such that $g_2\cdot\phi_2(Z_1)=\phi_2(X_0)$.

Now, the projective subvariety $Z_2:=g_2\cdot Z_1$ of $X$ is contained in \[(G_{L_1}/G_{L_1}\cap P_{L_1\backslash I}) \cap \phi^{-1}_2\phi_2(X_0)\subset G_{L_2}/(G_{L_2}\cap P_{L_2\backslash I}),\] where $L_2:=L_1\cap (L\cup C_{\beta_2})=\bigcap_{i=1,2} (L\cup C_{\beta_i})$.

By induction on $j=1,2,\ldots, m-1$, we can choose $g_{j+1}\in G_{L_j}$ with $L_j:=\bigcap_{i=1}^j (L\cup C_{\beta_i})$, and set $Z_{j+1}:=g_{j+1}\cdot Z_j$ such that $\phi_{j+1}(Z_{j+1})=\phi_{j+1}(X_0)$. It follows that 
$Z_{j+1}\subset G_{L_{j+1}}/(G_{L_{j+1}}\cap P_{L_{j+1}\backslash I}).$
In particular, the variety $Z_m=(g_m g_{m-1}\cdots g_1)\cdot Z$ is contained in $G_{L_m}/(G_{L_m}\cap P_{L_m\backslash I})=X_0$, by the assumption that $\bigcap\limits_{\alpha\in I\cap L} C_\alpha\subset L$. Since $X_0$ is irreducible with $\dim X_0=\dim Z$, we have $Z_m=X_0$. This verifies the Schur rigidity of $X_0$ in $X$.
\end{proof}

\begin{proposition}\label{product_case}
     Let $X_1,X_2\subset X=G/P_J$ be smooth Schubert varieties associated with the subdiagrams $\Gamma_{L_1}(I\cap L_1), \Gamma_{L_2}(I\cap L_2)$ of the marked Dynkin diagram $\Gamma_R(I)$ respectively, $L_1, L_2$ are disjoint such that $L_1\cup L_2$ is not connected, and $I\cap L_1,I\cap L_2$ consist of long roots. If the pairs $(G_{L_1}/(G_{L_1}\cap P_J), G/P_{J\cup (L_2 \cap I)}), (G_{L_2}/(G_{L_2}\cap P_{J}), G/P_{J\cup (L_1 \cap I)})$ are both Schur rigid, then $(G_{L_1}\times G_{L_2}/(G_{L_1}\times G_{L_2}) \cap P_J, G/P_J)$, which is associated with the subdiagram $\Gamma_{L_1\cup L_2}((L_1\cup L_2)\cap I)$, is also Schur rigid.
\end{proposition}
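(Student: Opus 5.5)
We plan to mimic the projection argument of Proposition \ref{no_condition_E}, with the two blocks $I_1:=I\cap L_1$ and $I_2:=I\cap L_2$ playing the roles of the single marked roots there. Set $X_0:=G_{L_1\cup L_2}/(G_{L_1\cup L_2}\cap P_J)\cong X_1\times X_2$. Consider the two Mori contractions
\[
\psi_1: G/P_J\rightarrow G/P_{J\cup I_2},\qquad \psi_2: G/P_J\rightarrow G/P_{J\cup I_1}.
\]
Since $L_1$ and $L_2$ are disjoint and not adjacent, $\psi_1$ contracts the $X_2$-factor and maps the $X_1$-factor isomorphically onto $\psi_1(X_0)=G_{L_1}/(G_{L_1}\cap P_{J\cup I_2})$. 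This image is the Schubert variety $X_1$ viewed in $G/P_{J\cup I_2}$, and symmetrically for $\psi_2$. The fibers of $\psi_1|_{X_0}$ are the translates $x\times X_2$.

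Let $Z\subset X$ be irreducible and reduced with $[Z]=r[X_0]$. First, $\psi_1(Z)$ is irreducible and its class is a positive multiple of $[\psi_1(X_0)]$. We check this by pushing forward and using that the classes of Schubert varieties pair positively with the dual Schubert classes. The Schur rigidity hypothesis for $(G_{L_1}/(G_{L_1}\cap P_J),G/P_{J\cup I_2})$ then gives $g_1\in G$ with $g_1\psi_1(Z)=\psi_1(X_0)$. This uses irreducibility: a sum of translates that is irreducible must be a single translate. Replace $Z$ by $g_1Z$. Then $Z\subset\psi_1^{-1}(\psi_1(X_0))$.

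Next we apply $\psi_2$. The point is to stay inside the stabilizer data, exactly as Lemma \ref{rigidity_from_big_to_small} is used in Proposition \ref{no_condition_E}. The preimage $\psi_1^{-1}\psi_1(X_0)$ is a Schubert variety of subdiagram type $G_{L'}/(G_{L'}\cap P_J)$ with $L'=L_1\cup\bigcup_{\beta\in I_2}C_\beta$. Hence $\psi_2(Z)$ lies in $G_{L'}/(G_{L'}\cap P_{J\cup I_1})$. By Lemma \ref{rigidity_from_big_to_small}, Schur rigidity of $X_2$ descends to this smaller ambient space, since $I_2$ consists of long roots. So we find $g_2\in G_{L'}$ with $g_2\psi_2(Z)=\psi_2(X_0)$.

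The key step, and the main obstacle, is to ensure that $g_2$ does not destroy the first normalization $\psi_1(Z)=\psi_1(X_0)$. We must show that the elements of $G_{L'}$ may be taken to stabilize $\psi_1(X_0)$. By Lemma \ref{stab.subdiagram}, the stabilizer of $\psi_1(X_0)$ in $G/P_{J\cup I_2}$ is $P_{(N(L_1)\cup I_1\setminus L_1)^c}$. The roots of $C_\beta$ with $\beta\in I_2$ lie outside $N(L_1)\cup(I\setminus L_1)$ unless $C_\beta$ touches $L_1$. We will try first to absorb the difference using the freedom of choosing $g_2$ modulo the stabilizer of $\psi_2(X_0)$, using the Levi decomposition of $G_{L'}$. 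Alternatively, we can argue dimensionally as in Proposition \ref{thm_with_condition_E}:
\begin{itemize}
\item[(a)] After the second normalization, $Z$ lies in $\psi_1^{-1}(g\psi_1(X_0))\cap\psi_2^{-1}\psi_2(X_0)$ for some $g$.
\item[(b)] Each fiber of $Z\rightarrow\psi_2(Z)$ has dimension at least $\dim X_1$, by semicontinuity.
\item[(c)] Each such fiber sits inside a translate $\psi_2^{-1}(y)\cap\psi_1^{-1}(g\psi_1(X_0))$. By Lemma \ref{intersection_max}-type reasoning (or directly, since $L_1$ and $L_2$ are orthogonal), this intersection has dimension at most $\dim X_1$, with equality only for a translate of $X_1$.
\end{itemize}

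Hence $Z$ is swept out by translates $h_y\cdot(X_1\times\{y\})$ parametrized by $y\in X_2$. Homological rigidity of $X_1$ (Corollary \ref{subdiagram_rigid_marked_root_outside}) identifies the Chow space of $X_1$ in $X$ with $G/\mathrm{Stab}(X_1)$. The resulting map $X_2\rightarrow G/\mathrm{Stab}(X_1)$ is then a translate of the standard embedding, by the Picard-number-one base case in the proof of Proposition \ref{prop homology rigidity reduction}. Consequently $Z=gX_0$.
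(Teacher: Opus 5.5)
Your first step (normalizing $\psi_1(Z)$ via the rigidity hypothesis for $X_1$) matches the paper, but the rest of the proposal does not close. The ``main obstacle'' you isolate, namely keeping $g_2$ from undoing the first normalization, is an artifact of normalizing sequentially. Neither of your two ways around it is carried out. The Levi-absorption idea is only announced. The route (a)--(c) ends with the claim that the induced map $X_2\to G/\mathrm{Stab}(X_1)$ is a translate of the standard one ``by the Picard-number-one base case'' of Proposition \ref{prop homology rigidity reduction}, and that base case does not apply. $X_2$ can have any Picard number, and the base case is a statement about local deformations of a fixed Schubert variety, not about arbitrary families of translates of $X_1$ parametrized by $X_2$. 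You also never determine the multiplicity $r$. Without $r=1$ you cannot invoke homological rigidity at the end.

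The missing idea is this. Since $I_1\cap I_2=\emptyset$, you have $P_{J\cup I_1}\cap P_{J\cup I_2}=P_J$, so $\iota=(\psi_1,\psi_2)\colon G/P_J\to G/P_{J\cup I_2}\times G/P_{J\cup I_1}$ is a closed embedding. The paper applies the two Schur rigidity hypotheses independently to $Z$ and gets $\psi_1(Z)=g_1X_1$ and $\psi_2(Z)=g_2X_2$. Then $\iota(Z)$ is a closed subvariety of the irreducible variety $g_1X_1\times g_2X_2$ of the same dimension, hence equal to it. So $Z\cong X_1\times X_2$ and $\iota_*[Z]=[g_1X_1\times g_2X_2]=\iota_*[X_0]\neq 0$, which forces $r=1$. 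Corollary \ref{subdiagram_rigid_marked_root_outside} then gives $Z=g\cdot X_0$.

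Your steps (a)--(c) already contain this argument. The bound in (c) holds simply because $\psi_1$ is injective on $\psi_2$-fibers. Moreover, (a) says $Z\subset\iota^{-1}(gX_1\times X_2)$, which has dimension at most $\dim Z$, so $Z$ equals it and no Chow-space argument is needed.

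A smaller point: pushing $[Z]$ forward along $\psi_1$ gives $0$ when $\dim X_2>0$. To see that $[\psi_1(Z)]\in\mathbb{R}^+[X_1]$, first bound $\dim\psi_1(Z)$ using the nef class $\psi_1^*h$, then push forward $[Z]\cdot\psi_2^*h_2^{\dim X_2}$. The paper also asserts this step without proof.
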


\begin{proof}
Let $Z$ be an irreducible reduced subvariety with $[Z]=r[G_{L_1}\times G_{L_2}/(G_{L_1}\times G_{L_2})\cap P_J]=r[X_1\times X_2]$. Consider the Mori contraction $\pi_{1}:  G/P_J \rightarrow G/P_{J\cup (L_2 \cap I)}$, we know that $\pi_{1}(Z)$ is irreducible reduced with $[\pi_{1}(Z)]\in \mathbb{R}^+[X_1]$. From the assumption, we know that there exists some $g_1 \in G$ such that \[\pi_{1}(Z)=g_1\cdot X_1=g_1\cdot (G_{L_1}/G_{L_1}\cap P_J).\] Similarly, for $\pi_{2}:  G/P_J \rightarrow G/P_{J\cup (L_1 \cap I)}$, we have
\[\pi_{2}(Z)=g_2\cdot X_2=g_2\cdot (G_{L_2}/G_{L_2} \cap P_J)\] for some $g_2\in G$. Since $L_1,L_2$ are disjoint, we have \[(J\cup(I\cap L_2))\cap (J\cup (I\cap L_1))\subset J,\]
and hence $(g^{-1}_1\pi_{1}, g_2^{-1}\pi_{2}): G/P_J \rightarrow G/P_{J\cup (L_2 \cap I)} \times G/P_{J\cup (L_1 \cap I)}$ is an embedding and the intersection of a $\pi_1$-fiber and a $\pi_2$-fiber is at most one point, we must have an isomorphism \[(g_1^{-1}\pi_{1}, g^{-1}_2\pi_{2})|_Z:Z \rightarrow G_{L_1}/(G_{L_1}\cap P_J) \times G_{L_2}/(G_{L_2}\cap P_J).\] This forces $r=1$. Then by Corollary \ref{subdiagram_rigid_marked_root_outside}, $Z$ is a $G$-translate of $G_{L_1} \times G_{L_2}/P_J$.
\end{proof}

\begin{proof}[Proof of Theorem \ref{Schur_subdiagram}]
First, we assume that $\Gamma_L$ is connected. When $|I\cap L|=1$, the conclusion follows from Lemma \ref{schur_pic_num_one}. When $|I\cap L|\geq 2$ and \eqref{eqn-intersection-Dynkin} holds, the conclusion follows from Proposition \ref{no_condition_E}. When $|I\cap L|\geq 2$ and \eqref{eqn-intersection-Dynkin} fails, we know that $G$ is of type $D$ or $E$ by Proposition \ref{prop-intersection-two-roots} and thus the conclusion follows easily from Proposition \ref{thm_with_condition_E}.

Finally, if we assume that $\Gamma_L$ is not connected, then the conclusion follows from Proposition \ref{product_case}, as if the product is not a fiber bundle over a projective space, then neither is each factor.
\end{proof}

\bigskip

Cong Ding(congding@szu.edu.cn)

\smallskip

School of Mathematical Sciences, Shenzhen University, Shenzhen 518060, China

\bigskip

Qifeng Li(qifengli@sdu.edu.cn)

\smallskip

School of Mathematics, Shandong University, Jinan 250100, China

\end{document}